\numberwithin{equation}{section}
\numberwithin{figure}{section}
\theoremstyle{plain}
\newtheorem{thm}{Theorem}[section]
\theoremstyle{plain}
\newtheorem{lem}[thm]{Lemma}
\theoremstyle{remark}
\newtheorem{rem}[thm]{Remark}
\theoremstyle{plain}
\theoremstyle{definition}
\newtheorem{defn}[thm]{Definition}
\theoremstyle{definition}
\newtheorem{ex}[thm]{Example}
\theoremstyle{definition}
\theoremstyle{plain}
\newtheorem{prop}[thm]{Proposition}
\theoremstyle{plain}
\theoremstyle{definition}
\theoremstyle{plain}
\newcommand{\comments}[1]{}
\newcommand{\rab}{\rangle}
\newcommand{\lab}{\langle}
\newcommand{\mcal}{\mathcal}
\newcommand{\C}{\mathbb{C}}
\newcommand{\vlon}{\varepsilon}
\title{Quantum Sets of Compact Quantum Groups}
\author{Mainak Ghosh}
\begin{document}
	\maketitle
	\global\long\def\vlon{\varepsilon}
	\global\long\def\bt{\bowtie}
	\global\long\def\ul#1{\underline{#1}}
	\global\long\def\ol#1{\overline{#1}}
	\global\long\def\norm#1{\left\|{#1}\right\|}
	\global\long\def\os#1#2{\overset{#1}{#2}}
	\global\long\def\us#1#2{\underset{#1}{#2}}
	\global\long\def\ous#1#2#3{\overset{#1}{\underset{#3}{#2}}}
	\global\long\def\t#1{\text{#1}}
	\global\long\def\lrsuf#1#2#3{\vphantom{#2}_{#1}^{\vphantom{#3}}#2^{#3}}
	\global\long\def\tr{\triangleright}
	\global\long\def\tl{\triangleleft}
	\global\long\def\cc90#1{\begin{sideways}#1\end{sideways}}
	\global\long\def\turnne#1{\begin{turn}{45}{#1}\end{turn}}
	\global\long\def\turnnw#1{\begin{turn}{135}{#1}\end{turn}}
	\global\long\def\turnse#1{\begin{turn}{-45}{#1}\end{turn}}
	\global\long\def\turnsw#1{\begin{turn}{-135}{#1}\end{turn}}
	\global\long\def\fusion#1#2#3{#1 \os{\textstyle{#2}}{\otimes} #3}
	
	\global\long\def\abs#1{\left|{#1}\right	|}
	\global\long\def\red#1{\textcolor{red}{#1}}

\begin{abstract}	
	Q-system completion can be thought of as a notion of higher idempotent completion of C*-2-categories. We introduce a notion of quantum bi-elements, and study Q-system completion in the context of compact quantum groups. We relate our notion of quantum bi-elements to already known notions of quantum sets and quantum functions, and provide a description of Q-system completion of the C*-2-category of compact quantum groups using quantum bi-elements.
\end{abstract}

\section{Introduction}\label{introduction}

Quantum information theory can be thought of as the study of information processing in a quantum setting, and is the quantum analog of classical (Shannon) information theory. Usually only finite dimensional systems or infinitely many copies of finite systems are considered. Comparatively, little work has been done on quantum systems with infinitely many degrees of freedom. In \cite{N18}, the rich structure of Subfactor theory has been leveraged to study quantum information tasks in infinite systems.

\vspace*{2mm}

In \cite{M03}, a Q-system which is a unitary version of a separable Frobenius algebra object in a C*-tensor category or a C*-2-category, is exhibited as an alternative axiomatization of the standard invariant of a finite index subfactor \cite{O88,P95,J99}. In the context of C*-2-categories, a Q-system is a 1-cell $_bQ_b \in \mcal C_1(b,b)$ along with two 2-cells $m : Q \boxtimes Q \to Q$ (multiplication) and $i : 1_b \to Q$ (unit), which are graphically denoted by the following:
\[m = \raisebox{-6mm}{
	\begin{tikzpicture}
		\draw[red,in=90,out=90,looseness=2] (-0.5,0.5) to (-1.5,0.5);
		\node at (-1,1.1) {${\color{red}\bullet}$};
		\draw[red] (-1,1.1) to (-1,1.6);
		\node[left,scale=0.7] at (-1,1.4) {$Q$};
		\node[left,scale=0.7] at (-1.6,0.5) {$Q$};
		\node[right,scale=0.7] at (-.5,.5) {$Q$};
\end{tikzpicture}} \ \ \ \ \ \ i = \raisebox{-6mm}{
	\begin{tikzpicture}
		\draw [red] (-0.8,-.6) to (-.8,.6);
		\node at (-.8,-.6) {${\color{red}\bullet}$};
		\node[left,scale=0.7] at (-.8,0) {$Q$};
\end{tikzpicture}} \ \ \ \ \ \ m^* = \raisebox{-6mm}{
	\begin{tikzpicture}
		\draw[red,in=-90,out=-90,looseness=2] (-0.5,0.5) to (-1.5,0.5);
		\node at (-1,-.1) {${\color{red}\bullet}$};
		\draw[red] (-1,-.1) to (-1,-.6);
		\node[left,scale=0.7] at (-1,-.4) {$Q$};
		\node[left,scale=0.7] at (-1.6,0.5) {$Q$};
		\node[right,scale=0.7] at (-.5,.5) {$Q$};
\end{tikzpicture}} \ \ \ \ \ \ i^* = \raisebox{-6mm}{
	\begin{tikzpicture}
		\draw [red] (-0.8,-.6) to (-.8,.6);
		\node at (-.8,.6) {${\color{red}\bullet}$};
		\node[left,scale=0.7] at (-.8,0) {$Q$};
\end{tikzpicture}} \]
These $2$-cells satisfy the following:
\[\raisebox{-6mm}{
	\begin{tikzpicture}
		\draw[red,in=90,out=90,looseness=2] (0,0) to (1,0);
		\draw[red,in=90,out=90,looseness=2] (0.5,.6) to (-.5,.6);
		\draw[red] (-.5,.6) to (-.5,0);
		\node at (.5,.6) {$\red{\bullet}$};
		\node at (0,1.2) {$\red{\bullet}$};
		\draw[red] (0,1.2) to (0,1.6);
\end{tikzpicture}}
= \raisebox{-6mm}{\begin{tikzpicture}
		\draw[red,in=90,out=90,looseness=2] (0,0) to (1,0);
		\draw[red,in=90,out=90,looseness=2] (.5,.6) to (1.5,.6);
		\node at (.5,.6) {$\red{\bullet}$};
		\draw[red] (1.5,.6) to (1.5,0);
		\node at (1,1.2) {$\red{\bullet}$};
		\draw[red] (1,1.2) to (1,1.6);
\end{tikzpicture}} \ \t{(associativity)} \ \ \ \ \ \ \raisebox{-4mm}{
	\begin{tikzpicture}
		\draw[red,in=90,out=90,looseness=2] (0,0) to (1,0);
		\node at (.5,.6) {$\red{\bullet}$};
		\node at (0,0) {$\red{\bullet}$};
		\draw[red] (.5,.6) to (.5,1.2);
\end{tikzpicture}}=
\raisebox{-4mm}{
	\begin{tikzpicture}
		\draw[red,in=90,out=90,looseness=2] (0,0) to (1,0);
		\node at (.5,.6) {$\red{\bullet}$};
		\node at (1,0) {$\red{\bullet}$};
		\draw[red] (.5,.6) to (.5,1.2);
\end{tikzpicture}}
=
\raisebox{-2mm}{
	\begin{tikzpicture}
		\draw[red] (0,0) to (0,1.2);
\end{tikzpicture}} \ \t{(unitality)} \]

\[\raisebox{-8mm}{
	\begin{tikzpicture}
		\draw[red,in=90,out=90,looseness=2] (0,0) to (1,0);
		\draw[red,in=-90,out=-90,looseness=2] (1,0) to (2,0);
		\node at (.5,.6) {$\red{\bullet}$};
		\node at (1.5,-.6) {$\red{\bullet}$};
		\draw[red] (.5,.6) to (.5,1.2);
		\draw[red] (1.5,-.6) to (1.5,-1.2);
		\draw[red] (0,0) to (0,-.6);
		\draw[red] (2,0) to (2,.6);
\end{tikzpicture}} =
\raisebox{-6mm}{
	\begin{tikzpicture}
		\draw[red,in=90,out=90,looseness=2] (0,0) to (1,0);
		\node at (.5,.6) {$\red{\bullet}$};
		\draw[red] (.5,.6) to (.5,1.2);
		\draw[red,in=-90,out=-90,looseness=2] (0,1.8) to (1,1.8);
		\node at (.5,1.2) {$\red{\bullet}$};
\end{tikzpicture}} =
\raisebox{-8mm}{
	\begin{tikzpicture}
		\draw[red,in=-90,out=-90,looseness=2] (0,0) to (1,0);
		\draw[red,in=90,out=90,looseness=2] (1,0) to (2,0);
		\node at (.5,-.6) {$\red{\bullet}$};
		\node at (1.5,.6) {$\red{\bullet}$};
		\draw[red] (.5,-.6) to (.5,-1.2);
		\draw[red] (1.5,.6) to (1.5,1.2);
		\draw[red] (0,0) to (0,.6);
		\draw[red] (2,0) to (2,-.6);
\end{tikzpicture}} \ \t{(Frobenius condition)} \ \ \ \ \ \ \raisebox{-6mm}{
	\begin{tikzpicture}
		\draw[red,in=90,out=90,looseness=2] (0,0) to (1,0);
		\draw[red,in=-90,out=-90,looseness=2] (0,0) to (1,0);
		\node at (.5,-.6) {$\red{\bullet}$};
		\node at (.5,.6) {$\red{\bullet}$};
		\draw[red] (.5,-.6) to (.5,-1.2);
		\draw[red] (.5,.6) to (.5,1.2);
\end{tikzpicture}} = 
\raisebox{-6mm}{
	\begin{tikzpicture}
		\draw[red] (0,0) to (0,2.4);
\end{tikzpicture}} \ \t{(Separability)} \]

\vspace*{2mm}

In \cite{CPJP}, the notion of \textit{Q-system completion} for C*/W*-2-categories, which is another version of a higher idempotent completion for C*/W*-2-categories in comparison with $ 2 $-categories of separable monads \cite{DR18} and condensation monads \cite{GJF19}, has been introduced, to induce actions of unitary fusion categories on C*-algebras. \textit{Q-system completion} can be thought of as a higher categorical generalization of the `Karoubi envelope' construction from category theory. \textit{Q-system completion} is being used to define the notion of semisimplicity in the context of C*-2-categories \cite{DR18,CPJP}. In recent years, \textit{Q-system completion} has gained considerable interest \cite{CP,G1,G2}.

\vspace*{2mm}

The notion of \textit{Q-system completion} in the context of compact quantum groups has not been much explored. Our goal in this paper is to study the \textit{Q-system completion} of the $ 2 $-category of compact quantum groups.

\vspace*{2mm}

Given a C*/W*-2-category $\mcal C$, which is also locally idempotent complete, its \textit{Q-system completion} is the 2-category $\textbf{QSys}(\mcal C)$ of Q-systems, bimodules and intertwiners in $\mcal C$ . There is a canonical inclusion *-2-functor $\iota_{\mcal C} : \mcal C \hookrightarrow \textbf{QSys}(\mcal C)$ which is always an equivalence on all hom categories. $\mcal C$ is said to be \textit{Q-system complete} if $\iota_{\mcal C}$ is a *-equivalence of *-2-categories. It has been established in \cite{CPJP}, that $\mcal C$ is \textit{Q-system complete} if and only if every Q-system `splits' in $\mcal C$. Our main objective is to study Q-system completion of compact quantum groups.

In \Cref{bimodules}, we form a C*-$ 2 $-category $\mcal G$ whose $ 0 $-cells are compact quantum groups, $ 1 $-cells are unitary bimodules and $ 2 $-cells are intertwiners. Then we prove the first theorem of the paper.

\begin{thm}\label{theorem}
	$\mcal G$ is a locally idempotent complete C*-2-category.
\end{thm}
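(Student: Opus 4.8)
The statement bundles two assertions: that $\mcal G$ is a C*-$2$-category, and that each of its hom-categories is idempotent complete (this is what \emph{locally idempotent complete} means). The first is mostly a verification against the axioms using the constructions of \Cref{bimodules}: horizontal composition of $1$-cells is the tensor product of unitary bimodules over the intermediate compact quantum group, equipped with the canonical unitary associators and unitors; vertical and horizontal composition of $2$-cells are the usual composition and tensoring of intertwiners; and the $*$-structure on a $2$-cell is the Hilbert-space adjoint of the underlying intertwiner. The only non-automatic points are that each $2$-cell space is complete, that the C*-identity $\norm{f^*f}=\norm f^2$ holds, and that $f^*f\ge 0$ for every $2$-cell $f$; all three hold because, for fixed $1$-cells $X$ and $Y$, the intertwiners $X\to Y$ form a norm-closed and adjoint-closed subspace of the bounded operators between the underlying Hilbert spaces, the equivariance conditions being preserved under norm limits and under taking adjoints. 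Together with the (strict) interchange law for operator composition, this makes each $\mcal G_1(b,b')$ a C*-category and $\mcal G$ a C*-$2$-category. I would treat this part briefly.

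The real content is local idempotent completeness. Fix compact quantum groups $\mbb G$ and $\mbb H$, a $1$-cell $X\in\mcal G_1(\mbb G,\mbb H)$, and a projection $2$-cell $p=p^2=p^*\in\mathrm{End}(X)$ (it suffices to consider projections, since every idempotent in a C*-category is similar to one); the goal is to split $p$. As $p$ is a bounded idempotent on the Hilbert space underlying $X$, its range $Y:=pX$ is a closed subspace; and since $p$ intertwines the left $\mbb G$-action and the right $\mbb H$-action, both actions restrict to $Y$ (for an action operator $a$ and $\xi\in Y$ one has $p(a\xi)=a(p\xi)=a\xi$). The plan is: (i) show that $Y$ with the restricted actions is again a unitary $\mbb G$-$\mbb H$ bimodule of the kind that constitutes a $1$-cell of $\mcal G$; (ii) observe that the inclusion $v\colon Y\hookrightarrow X$ is then an intertwiner, hence a $2$-cell, with $v^*v=1_Y$ and $vv^*=p$. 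This exhibits $p$ as a split idempotent, and since $X$, $p$, $\mbb G$, $\mbb H$ were arbitrary, $\mcal G$ is locally idempotent complete.

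Step (i) is the one genuinely non-formal point, and I expect it to be the main obstacle: it amounts to checking that the class of $1$-cells of $\mcal G$ is closed under passing to orthogonal direct summands. Concretely one must see that the restricted corepresentations remain unitary (immediate on an invariant subspace), that the two restricted actions still commute (inherited from $X$), and — most importantly — that whatever non-degeneracy or faithfulness requirements, and any smallness/generation hypotheses, are built into the definition of ``unitary bimodule'' in \Cref{bimodules} persist after compressing by $p$. I would obtain this directly from that definition: since $p$ is an intertwiner, all the $*$-homomorphisms and corepresentation unitaries implementing the $\mbb G$- and $\mbb H$-actions commute with $p$, so each structural identity valid for $X$ restricts verbatim to $Y$; non-degeneracy of the restricted actions follows from non-degeneracy for $X$ together with $p$ being a module map; and separability-type conditions pass to closed subspaces. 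With (i) in place, step (ii) and the remaining checks (that $v$ is a $2$-cell, and that the splitting is independent of the chosen representative) are routine, completing the proof.
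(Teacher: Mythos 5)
Your proposal is correct and follows essentially the same route as the paper: the paper also splits the projection at the level of Hilbert spaces (an isometry $i$ with $ii^*=p$, which you realize concretely as the inclusion of the range $pX$), defines the left and right coactions on the summand by compression, and verifies the four bimodule axioms and the intertwining property using that $p$ is an intertwiner, treating the C*-2-category structure itself only briefly. The lone cosmetic difference is that for the hom-categories being C*-categories the paper cites \cite[Lemma 1.10]{KDC} via the correspondence with unitary corepresentations, whereas you argue directly that intertwiner spaces are norm-closed and adjoint-closed (the adjoint-closedness implicitly using unitarity of the coactions), which does not change the substance.
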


\Cref{theorem}, allow us to consider the Q-system completion of $\mcal G$. This brings us to the second part of the paper (\Cref{qbl}), where we introduce the notion of \textit{quantum bi-elements}.

\vspace*{2mm}

Quantum pseudo-telepathy \cite{BBT} is a well studied concept in quantum information theory, where two non-communicating parties can use pre-shared entanglement to perform a task classically impossible without communication. Such tasks are usually formulated as games, where isolated players are provided with inputs and must return outputs satisfying some winning conditions. One such game is the \textit{graph isomorphism game} \cite{AMRSSV}, whose instances correspond to pairs of graphs $\Gamma$ and $\Gamma'$, and whose winning classical strategies are exactly graph isomorphisms between $\Gamma$ and $\Gamma'$. Winning quantum strategies are called \textit{quantum isomorphisms}. In \cite{MRV}, the notion of \textit{quantum function} between finite quantum sets has been introduced, which places this game-theoretic approach to quantisation into a broader mathematical context, in particular relating it to more conventional approaches to quantisation.

\vspace*{2mm}

In \cite{MRV}, \textit{a quantum element} of a quantum set (Q-systems in the C*-tensor category of finite dimensional Hilbert spaces) has been defined `quantising' the notion of a copyable state. Our formulation of \textit{quantum bi-elements} can be thought of as two-sided version of a quantum element. In \Cref{qbl}, we explore certain structural properties of \textit{quantum bi-elements}. We prove the following proposition :

\begin{prop}
	Quantum bi-elements give rise to non-unital quantum functions.
\end{prop}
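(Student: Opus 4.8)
The plan is to unwind both definitions and show that the data of a quantum bi-element, when restricted appropriately, satisfies the axioms of a non-unital quantum function. First I would recall the precise definition of a \emph{quantum function} from \cite{MRV}: it is given by a 1-cell (a bimodule, or in the finite quantum set setting a Hilbert space/operator system) between two quantum sets $(A,m_A,i_A)$ and $(B,m_B,i_B)$ together with a 2-cell (intertwiner) $P$ satisfying a multiplicativity/biunitality-type condition expressed via the two multiplications, plus a unitality condition involving $i_A$ and $i_B$. A \emph{non-unital} quantum function is the same data with the unitality axiom dropped (or replaced by the weaker condition that the relevant partial isometry is a projection rather than a unitary). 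Then I would write out the definition of a \emph{quantum bi-element}: a 1-cell in $\mcal G$ together with a 2-cell intertwining the two-sided (left and right) module structures, satisfying a ``copyability''-type relation with respect to both comultiplications $m^*$ — graphically, a splitting/duplication condition on each side.

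Next I would set up the comparison map. Given a quantum bi-element, I would extract from its two-sided intertwiner a candidate 2-cell $P$ for a quantum function by composing with the appropriate unit or counit 2-cells of one of the two Q-systems to ``collapse'' the bi-structure into a one-sided structure. Concretely, I expect the quantum-function 2-cell to be obtained by capping off one leg of the bi-element with $i^*$ (the counit) of the corresponding Q-system. The key steps are then: (i) verify that this $P$ is a well-defined intertwiner between the correct bimodules (a diagram chase using that the bi-element 2-cell intertwines both module actions); (ii) verify the multiplicativity axiom of a quantum function — this is where the copyability condition of the bi-element with respect to $m^*$ on each side gets used, together with the Frobenius and associativity relations of the Q-systems displayed in the introduction; (iii) verify that $P$ is a projection (self-adjoint idempotent), which follows from the adjointability built into the definition of a quantum bi-element together with separability of the Q-systems; and (iv) observe that the unitality axiom $P \circ (\text{unit}) = (\text{unit})$ need \emph{not} hold, which is precisely why the output is only a \emph{non-unital} quantum function — I would include a remark (or example) explaining that the failure of $i$-compatibility is genuine and not an artifact.

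I expect the main obstacle to be step (ii): showing multiplicativity. The subtlety is that a quantum bi-element carries \emph{two} independent copyability relations (one for each side), whereas a quantum function requires a single multiplicativity identity, so the proof must show that one of the two relations, after capping with the counit and massaging with the Frobenius relation, implies the quantum-function identity — and one has to be careful that no spurious scalar or normalization (coming from $i^* \circ i$ on the capped-off Q-system) is introduced, or if it is, that it can be absorbed. A clean way to organize this is entirely graphically: draw the quantum-function multiplicativity diagram, substitute the definition of $P$ in terms of the bi-element 2-cell, slide the counit cap past the multiplication using the Frobenius condition, and then apply the bi-element's copyability relation; the remaining equality should reduce to associativity/unitality of the uncapped Q-system. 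Steps (i), (iii), (iv) are routine string-diagram manipulations given the axioms collected in \Cref{introduction} and the (not-yet-stated) definition of quantum bi-elements in \Cref{qbl}.
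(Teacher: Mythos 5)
Your high-level plan (combine the two halves of the bi-element into a single map and check the quantum-function axioms graphically, conceding unitality) is in the right spirit, but the concrete construction you propose does not work, and the step that actually drives the paper's proof is missing. A quantum bi-element here is a triple $(H,Q_1,Q_2)$ with $Q_1 : H \to A\otimes H$ and $Q_2 : H \to H\otimes B$, and its counit axiom (condition (2) of \Cref{qbelmntdefn}) says precisely that capping either structure map with $i^*$ returns $1_H$. So ``capping off one leg of the bi-element with $i^*$'' collapses all the structure to the identity and, in any case, cannot produce a map of the required type $H\otimes B \to A\otimes H$: to turn the $B$-leg from an output into an input you need the \emph{adjoint} of one of the structure maps, not a counit cap. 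The paper's construction is $P \coloneqq Q_1\circ Q_2^* : H\otimes B \to A\otimes H$, giving a non-unital quantum function from $B$ to $A$ (note also the direction). The key technical ingredient, which your outline has no substitute for, is \Cref{associativelem}: the left--right compatibility (third equation of condition (1)) together with the Frobenius-type condition (3) of \Cref{qbelmntdefn} gives the alternative form $P = (1_A\otimes Q_2^*)\circ(Q_1\otimes 1_B)$, i.e.\ $Q_1$ and $Q_2^*$ ``commute.'' Multiplicativity is then proved by using this commutation, the adjoint of the $Q_2$-coassociativity relation together with \emph{separability} of $B$ (so no $i^*\circ i$ normalization ever enters, contrary to your worry), and finally the $Q_1$-coassociativity relation; your plan of sliding a counit cap past the multiplication via the Frobenius relation has no analogue of this and would stall.

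Two further points of mismatch with \Cref{qfndefn}: there is no bimodule-intertwiner condition to check for $P$ (it is just a linear map between Hilbert spaces), and a quantum function is not required to be a projection, so your step (iii) is off-target. The third axiom instead expresses $P^*$ as $P$ conjugated by the Frobenius cups and caps of $A$ and $B$; in the paper this is verified from condition (3) of \Cref{qbelmntdefn} (and its adjoint) together with \Cref{associativelem}. Your step (iv) — that unitality simply need not hold and this is why the output is only non-unital — does agree with the paper.
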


\vspace*{2mm}

Then we turn our attention back to Q-system completion of $\mcal G$. We build a category $\textbf{QBL}$ of quantum bi-elements and intertwiners, and proceed towards our main theorem

\begin{thm}\label{maintheorem}
	The hom categories of $\normalfont\textbf{QSys}(\mcal G)$ form a full subcategory of $\normalfont\textbf{QBL}$.
\end{thm}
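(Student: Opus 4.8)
The plan is to build, for each ordered pair of Q-systems $(\mbb G_1,Q_1)$, $(\mbb G_2,Q_2)$ in $\mcal G$, a functor
\[
F\colon \textbf{QSys}(\mcal G)\big((\mbb G_1,Q_1),(\mbb G_2,Q_2)\big)\longrightarrow\textbf{QBL},
\]
to show that $F$ is faithful, injective on objects, and full, and then to check that its image is closed under isomorphism and under passing to subobjects in $\textbf{QBL}$; assembling these functors over all pairs of Q-systems then identifies the hom categories of $\textbf{QSys}(\mcal G)$ with a full subcategory of $\textbf{QBL}$. Since distinct quantum bi-elements are attached to prescribed pairs of compact quantum groups, a $\textbf{QBL}$-morphism between objects in the image can only relate $1$-cells with the same source and target, so it is enough to argue hom category by hom category.

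For $F$ on objects, recall from the description of Q-system completion in \cite{CPJP} (available here because \Cref{theorem} supplies the relative tensor products underlying $\textbf{QSys}(\mcal G)$) that a $1$-cell of $\textbf{QSys}(\mcal G)$ from $(\mbb G_1,Q_1)$ to $(\mbb G_2,Q_2)$ is a $Q_2$-$Q_1$ bimodule $({}_{\mbb G_2}X_{\mbb G_1},\lambda,\rho)$ internal to $\mcal G$: a unitary $\mbb G_2$-$\mbb G_1$ bimodule $X$ together with a left $Q_2$-action $\lambda$ and a right $Q_1$-action $\rho$ compatible with the multiplications and units displayed in \Cref{introduction}. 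From this data I would produce the underlying quantum bi-element of \Cref{qbl} by retaining $X$ and forming its structure maps out of $\lambda$, $\rho$, and the comultiplications $m_i^{*}$ and counits $i_i^{*}$ of the $Q_i$ (available since each $Q_i$ is simultaneously a Frobenius algebra). The associativity, unitality, Frobenius and separability relations recorded in \Cref{introduction} are precisely what is needed to verify the quantum-bi-element axioms; this is the bulk of the computation and is cleanest in the graphical calculus of $\mcal G$. Because the $2$-cells of $\textbf{QSys}(\mcal G)$ between such $1$-cells are exactly the $Q_2$-$Q_1$-bimodule intertwiners, and any such map already intertwines the underlying data, $F$ is defined on morphisms by passing to the underlying intertwiner.

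Faithfulness of $F$ is immediate, as it leaves the underlying linear map unchanged, and injectivity on objects holds because $\lambda$ and $\rho$, hence $X$ itself, are recovered from the structure maps of $F(X)$ by composing with the units and counits of the $Q_i$. For fullness, given a morphism $t\colon F(X)\to F(Y)$ in $\textbf{QBL}$, compatibility of $t$ with the quantum-bi-element structure maps, together with these reconstruction formulas, forces $t\circ\lambda_X=\lambda_Y\circ(\mathrm{id}_{Q_2}\btimes t)$ and the analogous identity on the $Q_1$-side, so $t$ is an internal bimodule intertwiner, i.e.\ a $2$-cell of $\textbf{QSys}(\mcal G)$. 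Finally, an isomorphism in $\textbf{QBL}$ between objects of the image restricts to a unitary of underlying bimodules intertwining all structure, hence to an isomorphism of $1$-cells, while a subobject of some $F(X)$ in $\textbf{QBL}$ is cut out by a projection that, by the local idempotent completeness from \Cref{theorem}, splits to yield a sub-$1$-cell of $X$; thus the image is a full, replete subcategory.

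The main obstacle is the fullness/axiom-matching step: one must show that the axioms defining a quantum bi-element are tight enough not merely to follow from the internal $Q_1$- and $Q_2$-bimodule structure but to recover it, so that $\textbf{QBL}$-morphisms between objects of the image are necessarily internal bimodule intertwiners. A secondary technical point is reconciling the unital and non-unital pictures (cf.\ the proposition above): the $1$-cells of $\textbf{QSys}(\mcal G)$ carry honest units, so one must verify that $F$ lands among those quantum bi-elements whose associated quantum function is unital and that this property is stable under $\textbf{QBL}$-morphisms, which is what makes the asserted full subcategory well defined.
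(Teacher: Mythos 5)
Your overall route is the one the paper takes: a $1$-cell of $\textbf{QSys}(\mcal G)(Q,P)$ is a $Q$-$P$ bimodule $(X,\lambda_X,\rho_X)$ internal to $\mcal G$, and one passes to the quantum bi-element with structure maps built from the actions --- in the paper this is simply $(X,\lambda_X^{*},\rho_X^{*})$, which is what your ``$\lambda$, $\rho$ together with (co)multiplications and (co)units'' reduces to after the Frobenius moves. The bimodule axioms give conditions (1)--(3) of \Cref{qbelmntdefn}, and the converse tightness you worry about is exactly what \Cref{frobeniuslem} and \Cref{isoprop} supply (the bi-element axioms regenerate the Frobenius and separability relations), with the matching of morphism conditions handled via \cite[Facts 3.15]{CPJP}; note here that the intertwiner conditions of \Cref{intertwinerdefn} are the \emph{adjoint-side} versions of the $\textbf{QSys}(\mcal G)$ $2$-cell conditions, so the comparison of hom-sets needs closure of bimodule maps under adjoints (a Frobenius/separability fact), not just your ``reconstruction formulas''.

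Two of your supporting claims are, however, wrong or unjustified. First, $F$ is not injective on objects and $X$ is not recoverable from $F(X)$: a quantum bi-element records only the Hilbert space and the maps $\lambda_X^{*},\rho_X^{*}$, not the $\mathbb{G}$- and $\mathbb{H}$-coactions that make $X$ a $1$-cell of $\mcal G$, so two $1$-cells with the same underlying space and actions but different coactions have the same image; relatedly, the extra steps about repleteness and closure under subobjects are not part of proving ``full subcategory'' and can be dropped. Second, and this is the genuine gap, your fullness argument only shows that a $\textbf{QBL}$-morphism $t$ between $F(X)$ and $F(Y)$ is $\lambda/\rho$-bimodular as a linear map. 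A $2$-cell of $\textbf{QSys}(\mcal G)$ must in addition be a $2$-cell of $\mcal G$, i.e.\ an intertwiner for the coactions on $X$ and $Y$, and nothing in the quantum bi-element data (hence nothing in ``compatibility with the structure maps'') sees those coactions; so the step ``therefore $t$ is a $2$-cell of $\textbf{QSys}(\mcal G)$'' does not follow as written and is precisely the point where the identification with a full subcategory needs an argument beyond the linear-algebraic matching that \Cref{frobeniuslem}, \Cref{isoprop} and \cite{CPJP} provide.
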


There at least two natural questions arising from our work. First, is to identify under what assumptions a quantum function will yield a quantum bi-element. Second, is to check whether the same program can be done in the context of locally compact quantum groups.

\subsection*{Outline of the paper}
In \Cref{prelim}, we discuss the preliminaries related to C*-2-categories, graphical calculus pertaining to it, Q-system completion and compact quantum groups. In \Cref{bimodules}, we form a $ 2 $-category of compact quantum groups and show that is locally idempotent complete. In \Cref{qbl}, we formulate quantum bi-elements and study their structural properties and prove \Cref{maintheorem}.

\subsection*{Acknowledgements}
The author would like to thank Shamindra Ghosh, Makoto Yamashita, Dominic Verdon and Jyotishman Bhowmick for several fruitful discussions.

\section{Preliminaries}\label{prelim}
In this section we will furnish the necessary background and references on $ 2 $-categories, Q-system completion and compact quantum groups that will be needed for our purposes. Towards the end of this section, we discuss the graphical calculus for the C*-tensor category of finite dimensional Hilbert spaces, which will be needed in \Cref{qbl} . 

\subsection{Notations related to 2-categories}\label{graphcalc}
We refer the reader to \cite{JY21} for the basics of 2-categories.\\
Suppose $\mcal C$ is a 2-category and  $a,b \in \mcal C_0$ be two $0$-cells. A $1$-cell from $a  \xrightarrow{X} b$ is denoted by $_aX_b$. Pictorially, a $1$-cell will be denoted by a strand and a $2$-cell will be denoted by a box with strings with passing through it. Suppose we have two $1$-cells $X, Y \in \mcal C_1(a,b)$ and $f \in \mcal C_2(X,Y)$ be a $2$-cell. Then we will denote $f$ as 
\raisebox{-8mm}{
	\begin{tikzpicture}
		\draw (0,.8) to (0,-.8);
		\node[draw,thick,rounded corners,fill=white, minimum width=20] at (0,0) {$f$};
		\node[right] at (0,-.7) {$X$};
		\node[right] at (0,.7) {$Y$};
\end{tikzpicture}}.
We write tensor product $\boxtimes$ of $1$-cells $_aX_b$ and $_bY_c$,  from left to right $ _aX \us{b}\boxtimes Y_c$. Tensor product of two $ 2 $-cells is given by placing the boxes side by side. Composition of $ 2 $-cells is denoted by stacking the boxes one upon another.
$ 1 $-cells which are Q-systems will be denoted by colored strands.
\comments{\red{The notion of C*-2-categories is believed to first appear in \cite{LR}}.} For the basics of C*/W*-2-categories, we refer the reader to \cite{CPJP,GLR85}. For a detailed study about graphical calculus we refer the reader to \cite{HV}.

\subsection{Q-system completion}\

Let $\mcal C$ be a C*-2-category.
\begin{defn} \label{Qsysdefn}
	A Q-system in $\mcal C$ is a $1$-cell $_bQ_b \in \mcal C_1(b,b)$ along with multiplication map $m \in \mcal C_2(Q \boxtimes_b Q, Q)$ and unit map $i \in \mcal C_2(1_b,Q)$, as mentioned in \Cref{introduction}, satisfying the following properties:
\end{defn}
\begin{itemize}
	\item[(Q1)]
	\raisebox{-6mm}{
		\begin{tikzpicture}
			\draw[red,in=90,out=90,looseness=2] (0,0) to (1,0);
			\draw[red,in=90,out=90,looseness=2] (0.5,.6) to (-.5,.6);
			\draw[red] (-.5,.6) to (-.5,0);
			\node at (.5,.6) {$\red{\bullet}$};
			\node at (0,1.2) {$\red{\bullet}$};
			\draw[red] (0,1.2) to (0,1.6);
	\end{tikzpicture}}
	= \raisebox{-6mm}{\begin{tikzpicture}
			\draw[red,in=90,out=90,looseness=2] (0,0) to (1,0);
			\draw[red,in=90,out=90,looseness=2] (.5,.6) to (1.5,.6);
			\node at (.5,.6) {$\red{\bullet}$};
			\draw[red] (1.5,.6) to (1.5,0);
			\node at (1,1.2) {$\red{\bullet}$};
			\draw[red] (1,1.2) to (1,1.6);
	\end{tikzpicture}} \ \ \ \ \ \ \t{(associativity)} 
	
	\item[(Q2)]
	\raisebox{-4mm}{
		\begin{tikzpicture}
			\draw[red,in=90,out=90,looseness=2] (0,0) to (1,0);
			\node at (.5,.6) {$\red{\bullet}$};
			\node at (0,0) {$\red{\bullet}$};
			\draw[red] (.5,.6) to (.5,1.2);
	\end{tikzpicture}} = 
	\raisebox{-4mm}{
		\begin{tikzpicture}
			\draw[red,in=90,out=90,looseness=2] (0,0) to (1,0);
			\node at (.5,.6) {$\red{\bullet}$};
			\node at (1,0) {$\red{\bullet}$};
			\draw[red] (.5,.6) to (.5,1.2);
	\end{tikzpicture}}
	= 
	\raisebox{-3mm}{
		\begin{tikzpicture}
			\draw[red] (0,0) to (0,1.2);
	\end{tikzpicture}} \ \ \ \ \ \ \t{(unitality)} 
	
	\item[(Q3)]
	\raisebox{-10mm}{
		\begin{tikzpicture}
			\draw[red,in=90,out=90,looseness=2] (0,0) to (1,0);
			\draw[red,in=-90,out=-90,looseness=2] (1,0) to (2,0);
			\node at (.5,.6) {$\red{\bullet}$};
			\node at (1.5,-.6) {$\red{\bullet}$};
			\draw[red] (.5,.6) to (.5,1);
			\draw[red] (1.5,-.6) to (1.5,-1);
			\draw[red] (0,0) to (0,-1);
			\draw[red] (2,0) to (2,1);
	\end{tikzpicture}} \ =
	\raisebox{-10mm}{
		\begin{tikzpicture}
			\draw[red,in=90,out=90,looseness=2] (0,0) to (1,0);
			\node at (.5,.6) {$\red{\bullet}$};
			\draw[red] (.5,.6) to (.5,1.4);
			\draw[red,in=-90,out=-90,looseness=2] (0,2) to (1,2);
			\node at (.5,1.4) {$\red{\bullet}$};
	\end{tikzpicture}} =
	\raisebox{-10mm}{
		\begin{tikzpicture}
			\draw[red,in=-90,out=-90,looseness=2] (0,0) to (1,0);
			\draw[red,in=90,out=90,looseness=2] (1,0) to (2,0);
			\node at (.5,-.6) {$\red{\bullet}$};
			\node at (1.5,.6) {$\red{\bullet}$};
			\draw[red] (.5,-.6) to (.5,-1);
			\draw[red] (1.5,.6) to (1.5,1);
			\draw[red] (0,0) to (0,1);
			\draw[red] (2,0) to (2,-1);
	\end{tikzpicture}} \ \ \ \ \ \ \t{(Frobenius condition)} 
	
	\item[(Q4)]
	\raisebox{-10mm}{
		\begin{tikzpicture}
			\draw[red,in=90,out=90,looseness=2] (0,0) to (1,0);
			\draw[red,in=-90,out=-90,looseness=2] (0,0) to (1,0);
			\node at (.5,-.6) {$\red{\bullet}$};
			\node at (.5,.6) {$\red{\bullet}$};
			\draw[red] (.5,-.6) to (.5,-1.2);
			\draw[red] (.5,.6) to (.5,1.2);
	\end{tikzpicture}} = 
	\raisebox{-10mm}{
		\begin{tikzpicture}
			\draw[red] (0,0) to (0,2.4);
	\end{tikzpicture}} \ \ \ \ \ \ \t{(Separability)}
	
\end{itemize}
\begin{rem}\label{Qdual}
	A Q-system $Q$ is a self-dual 1-cell with $ev_Q \coloneqq 
	\raisebox{-4mm}{
		\begin{tikzpicture}[scale=.8]
			\draw[red,in=-90,out=-90,looseness=2] (-0.5,0.5) to (-1.5,0.5);
			\node at (-1,-.1) {${\color{red}\bullet}$};
			\draw[red] (-1,-.1) to (-1,-.6);
			\node[left,scale=0.7] at (-1,-.4) {$Q$};
			\node[left,scale=0.7] at (-1.6,0.4) {$Q$};
			\node[right,scale=0.7] at (-.5,.4) {$Q$};
			\node at (-1,-.6) {${\color{red}\bullet}$};
	\end{tikzpicture}}
	$ and $coev_Q \coloneqq \raisebox{-4mm}{
		\begin{tikzpicture}[scale=.8]
			\draw[red,in=90,out=90,looseness=2] (-0.5,0.5) to (-1.5,0.5);
			\node at (-1,1.1) {${\color{red}\bullet}$};
			\draw[red] (-1,1.1) to (-1,1.6);
			\node[left,scale=0.7] at (-1,1.4) {$Q$};
			\node[left,scale=0.7] at (-1.6,0.6) {$Q$};
			\node[right,scale=0.7] at (-.5,.6) {$Q$};
			\node at (-1,1.6) {${\color{red}\bullet}$};
	\end{tikzpicture}}$.
\end{rem}
\begin{defn}
	Suppose $\mcal C$ is a C*-2-category and $_aX_b \in \mcal C_1(a,b)$. A \textit{unitarily separable left dual} for $_a X_b$ is a dual $\left( _b \ol X_a, ev_X, coev_X \right)$ such that $ev_X \circ ev_X^* = \t{id}_{1_b}$ (cf. \cite[Example 3.9]{CPJP}).
\end{defn}
Given a unitarily separable left dual for $_aX_b \in \mcal C_1(a,b)$, $_a X \us{b} \boxtimes \ol X_a \in \mcal C_1(b,b)$ is a Q-system with multiplication map $m \coloneqq \t{id}_X \boxtimes ev_X \boxtimes \t{id}_{\ol X}$ and unit map $i \coloneqq coev_X$.

Given a Q-system $Q \in \mcal C_1(b,b)$, if it is of the above form,  then we say that the Q-system $Q$ `splits'.

\begin{defn}
	Suppose $\mcal C$ is a C*-2-category. $\mcal C$ is said to be \textit{locally idempotent complete} if, given any projection $p \in \t{End}(X)$ for a $ 1 $-cell $X \in \mcal C_1(a,b)$, there is another $ 1 $-cell $Y \in \mcal C_1(a,b)$ and an isometry $v \in \mcal C_2(Y,X)$ such that $v v^* = p$ .  
\end{defn}

\begin{defn}\cite{CPJP}
	Suppose $\mcal C$ is a locally idempotent complete C*-2-category, and $P \in \mcal C_1(a,a)$ and $Q \in \mcal C_1(b,b)$ be Q-systems in $\mcal C$. A $Q-P$ bimodule is a triple $\left( X, \lambda_X, \rho_X \right)$ consisting of $_aX_b \in \mcal C_1(a,b)$, $\lambda_X \in \mcal C_2 \left(Q \boxtimes X, X\right)$ and $\rho_X \in \mcal C_2(X \boxtimes P, X)$ satisfying certain properties. We represent $X, P, Q$ graphically by purple, red and magenta strands respectively. We denote $\lambda_X, \rho_X, \lambda_X^*, \rho_X^*$ as follows:
	\[\lambda_X \ = \ \raisebox{-2mm}{\begin{tikzpicture}
			\draw[purple] (0,0) to (0,.6);
			\draw[red][in=120,out=90,looseness=1] (-.5,0) to (0,.3);
			\node[scale=.8] at (0,.3) {$\color{purple}{\bullet}$};
	\end{tikzpicture}} \ \ \ \ 
	\rho_X \ = \ \raisebox{-2mm}{\begin{tikzpicture}
			\draw[purple] (0,0) to (0,.6);
			\draw[magenta][in=60,out=90,looseness=1] (.5,0) to (0,.3);
			\node[scale=.8] at (0,.3) {$\color{purple}{\bullet}$};
	\end{tikzpicture}} \ \ \ \
	\lambda_X^* \ = \ \raisebox{-2mm}{\begin{tikzpicture}[rotate=180]
			\draw[purple] (0,0) to (0,.6);
			\draw[red][in=60,out=90,looseness=1] (.5,0) to (0,.3);
			\node[scale=.8] at (0,.3) {$\color{purple}{\bullet}$};
	\end{tikzpicture}} \ \ \ \
	\rho_X^* \ = \ \raisebox{-2mm}{\begin{tikzpicture}[rotate=180]
			\draw[purple] (0,0) to (0,.6);
			\draw[magenta][in=120,out=90,looseness=1] (-.5,0) to (0,.3);
			\node[scale=.8] at (0,.3) {$\color{purple}{\bullet}$};
	\end{tikzpicture}}  \]
	The bimodule axioms are as follows:
	\begin{itemize}
		\item[(B1)] \raisebox{-4mm}{\begin{tikzpicture}
				\draw[purple] (0,0) to (0,1.2);
				\draw[red][in=120,out=90,looseness=1] (-.5,0) to (0,.5);
				\draw[red][in=120,out=90,looseness=1] (-.7,0) to (0,.7);
				\node[scale=.8] at (0,.52) {$\color{purple}{\bullet}$};
				\node[scale=.8] at (0,.72) {$\color{purple}{\bullet}$};
				\node at (.6,.6) {$=$};
				\draw[red,in=90,out=90,looseness=2] (1,0) to (1.5,0);
				\node[scale=.7] at (1.25,.27) {$\red{\bullet}$};
				\draw[red,in=120,out=90,looseness=1] (1.25,.27) to (1.75,.57);
				\draw[purple] (1.75,0) to (1.75,1.2);
				\node[scale=.8] at (1.75,.59) {$\color{purple}{\bullet}$};
		\end{tikzpicture}} \hspace*{2mm}, \hspace*{2mm}
		\raisebox{-4mm}{\begin{tikzpicture}
				\draw[purple] (0,0) to (0,1.2);
				\draw[magenta][in=60,out=90,looseness=1] (.5,0) to (0,.5);
				\draw[magenta][in=60,out=90,looseness=1] (.7,0) to (0,.7);
				\node[scale=.8] at (0,.52) {$\color{purple}{\bullet}$};
				\node[scale=.8] at (0,.72) {$\color{purple}{\bullet}$};
				\node at (1,.6) {$=$};
				\draw[purple] (1.3,0) to (1.3,1.2);
				\draw[magenta,in=90,out=90,looseness=2] (1.55,0) to (2.05,0);
				\draw[magenta,in=60,out=90,looseness=1] (1.8,.27) to (1.3,.57);
				\node[scale=.7] at (1.8,.27) {$\color{magenta}{\bullet}$};
				\node[scale=.8] at (1.3,.57) {$\color{purple}{\bullet}$};
		\end{tikzpicture}} \hspace*{2mm} and \hspace*{2mm}
		\raisebox{-4mm}{\begin{tikzpicture}
				\draw[purple] (0,0) to (0,1);
				\draw[red][in=120,out=90,looseness=1] (-.5,0) to (0,.6);
				\draw[magenta][in=60,out=90,looseness=1] (.5,0) to (0,.3);
				\node[scale=.8] at (0,.6) {$\color{purple}{\bullet}$};
				\node[scale=.8] at (0,.3) {$\color{purple}{\bullet}$};
				\node at (.7,.6) {$=$};
				\draw[purple] (1.5,0) to (1.5,1);
				\draw[red][in=120,out=90,looseness=1] (1,0) to (1.5,.3);
				\draw[magenta][in=60,out=90,looseness=1] (2,0) to (1.5,.6);
				\node[scale=.8] at (1.5,.6) {$\color{purple}{\bullet}$};
				\node[scale=.8] at (1.5,.3) {$\color{purple}{\bullet}$};
		\end{tikzpicture}} \hspace*{2mm} (associativity)
		\item[(B2)] \raisebox{-4mm}{\begin{tikzpicture}
				\draw[purple] (0,0) to (0,1);
				\draw[red][in=120,out=90,looseness=1] (-.5,.2) to (0,.6);
				\node[scale=.7] at (-.5,.2) {$\red{\bullet}$};
				\node[scale=.8] at (0,.6) {$\color{purple}{\bullet}$};
				\node at (.4,.6) {$=$};
				\draw[purple] (.8,0) to (.8,1);
				\node at (1.2,.6) {$=$};
				\draw[purple] (1.6,0) to (1.6,1);
				\draw[magenta][in=60,out=90,looseness=1] (2.1,.2) to (1.6,.6);
				\node[scale=.7] at (2.1,.2) {$\color{magenta}{\bullet}$};
				\node[scale=.8] at (1.6,.6) {$\color{purple}{\bullet}$};
		\end{tikzpicture}} \hspace*{2mm} (unitality)
		
		\item[(B3)] \raisebox{-6mm}{\begin{tikzpicture}
				\draw[purple] (0,0) to (0,1.2);
				\draw[red][in=-120,out=-90,looseness=1] (-.3,.5) to (0,.3);
				\draw[red,in=90,out=90,looseness=2] (-.7,.5) to (-.3,.5);
				\draw[red] (-.7,.5) to (-.7,0);
				\draw[red] (-.5,.7) to (-.5,1);
				\node[scale=.7] at (-.5,.72) {$\red{\bullet}$};
				\node[scale=.8] at (0,.3) {$\color{purple}{\bullet}$};
				\node at (.4,.6) {$=$};
				\draw[purple] (1.2,0) to (1.2,1.2);
				\draw[red][in=-120,out=-90,looseness=1] (.8,1.2) to (1.2,.8);
				\draw[red][in=120,out=90,looseness=1] (.8,0) to (1.2,.4);
				\node[scale=.8] at (1.2,.8) {$\color{purple}{\bullet}$};
				\node[scale=.8] at (1.2,.4) {$\color{purple}{\bullet}$};
				\node at (1.6,.6) {$=$};
				\draw[purple] (2.8,0) to (2.8,1.2);
				\draw[red][in=120,out=90,looseness=1] (2.4,.4) to (2.8,.8);
				\draw[red,in=-90,out=-90,looseness=2] (2.4,.4) to (2,.4);
				\draw[red] (2,.4) to (2,1);
				\draw[red] (2.2,.2) to (2.2,-.1);
				\node[scale=.7] at (2.2,.15) {$\red{\bullet}$};
				\node[scale=.8] at (2.8,.8) {$\color{purple}{\bullet}$};
		\end{tikzpicture}} \hspace*{2mm} and \hspace*{2mm}
		\raisebox{-6mm}{\begin{tikzpicture}
				\draw[purple] (0,0) to (0,1.2);
				\draw[magenta][in=-60,out=-90,looseness=1] (.3,.5) to (0,.3);
				\draw[magenta][in=90,out=90,looseness=2] (.3,.5) to (.7,.5);
				\draw[magenta] (.5,.7) to (.5,1); 
				\draw[magenta] (.7,.5) to (.7,0);
				\node[scale=.8] at (0,.3) {$\color{purple}{\bullet}$};
				\node[scale=.7] at (.5,.7) {$\color{magenta}{\bullet}$};
				\node at (1.1,.6) {$=$};
				\draw[purple] (1.5,0) to (1.5,1.2);
				\draw[magenta][in=-60,out=-90,looseness=1] (1.9,1.2) to (1.5,.8);
				\draw[magenta][in=60,out=90,looseness=1] (1.9,0) to (1.5,.4);
				\node[scale=.8] at (1.5,.8) {$\color{purple}{\bullet}$};
				\node[scale=.8] at (1.5,.4) {$\color{purple}{\bullet}$};
				\node at (2.3,.6) {$=$};
				\draw[purple] (2.7,0) to (2.7,1.2);
				\draw[magenta][in=60,out=90,looseness=1] (3,.4) to (2.7,.7);
				\draw[magenta,in=-90,out=-90,looseness=2] (3,.4) to (3.4,.4);
				\draw[magenta] (3.4,.4) to (3.4,1);
				\draw[magenta] (3.2,.2) to (3.2,0);
				\node[scale=.7] at (3.2,.15) {$\color{magenta}{\bullet}$};
				\node[scale=.8] at (2.7,.7) {$\color{purple}{\bullet}$}; 
		\end{tikzpicture}} \hspace*{2mm} (frobenius condition)
		
		\item[(B4)] \raisebox{-6mm}{\begin{tikzpicture}
				\draw[purple] (0,0) to (0,1.2);
				\draw[red][in=-120,out=-90,looseness=1] (-.3,.5) to (0,.3);
				\draw[red][in=120,out=90,looseness=1] (-.3,.5) to (0,.8);
				\node[scale=.8] at (0,.3) {$\color{purple}{\bullet}$};
				\node[scale=.8] at (0,.8) {$\color{purple}{\bullet}$};
				\node at (.4,.6) {$=$};
				\draw[purple] (.8,0) to (.8,1.2);
				\node at (1.2,.6) {$=$};
				\draw[purple] (1.6,0) to (1.6,1.2);
				\draw[magenta][in=-60,out=-90,looseness=1] (1.9,.5) to (1.6,.3);
				\draw[magenta][in=60,out=90,looseness=1] (1.9,.5) to (1.6,.8);
				\node[scale=.8] at (1.6,.3) {$\color{purple}{\bullet}$};
				\node[scale=.8] at (1.6,.8) {$\color{purple}{\bullet}$};
		\end{tikzpicture}} \hspace*{2mm} (separability)
	\end{itemize}
\end{defn}

\begin{defn}
	Given a C*/W*-2-category $\mcal C$, it's \textit{Q-system completion} is the C*/W*-2-category $\textbf{QSys}(\mcal C)$ whose :
	\begin{itemize}
		\item[(1)] 0-cells are Q-systems $(Q,m,i) \in \mcal C_1(b,b)$.
		\item[(2)] 1-cells between Q-systems $P \in \mcal C_1(a,a)$ and $Q \in \mcal C_1(b,b)$ are $Q-P$ bimodules $(X, \lambda_X, \rho_X)$.
		\item[(3)] 2-cells are bimodule intertwiners. Given Q-systems $_aP_a$, $_bQ_b$ and $Q-P$ bimodules $_bX_a$, $_bY_a$, $\textbf{QSys}(\mcal C)_2(_QX_P,\ _QY_P)$ is the set of $f \in \mcal C_2(X,Y)$ such that:
		\[\raisebox{-4mm}{\begin{tikzpicture}
				\draw[purple] (0,0) to (0,1.6);
				\draw[red][in=120,out=90,looseness=1] (-.5,0) to (0,.3);
				\node[scale=.8] at (0,.3) {$\color{purple}{\bullet}$};
				\node[draw,thick,rounded corners,fill=white] at (0,.9) {$f$};
				\node at (.6,.8) {$=$};
				\draw[purple] (1.5,0) to (1.5,1.6);
				\draw[red][in=120,out=90,looseness=1] (1,.8) to (1.5,1.1);
				\node[scale=.8] at (1.5,1.1) {$\color{purple}{\bullet}$};
				\node[draw,thick,rounded corners,fill=white] at (1.5,.6) {$f$};
				\draw[red] (1,.8) to (1,0);
		\end{tikzpicture}} \ \ \ \ \t{and}\ \ \ \
		\raisebox{-4mm}{\begin{tikzpicture}
				\draw[purple] (0,0) to (0,1.6);
				\draw[magenta][in=60,out=90,looseness=1] (.5,0) to (0,.3);
				\node[scale=.8] at (0,.3) {$\color{purple}{\bullet}$};
				\node[draw,thick,rounded corners,fill=white] at (0,.9) {$f$};
				\node at (.6,.8) {$=$};
				\draw[purple] (1.2,0) to (1.2,1.6);
				\draw[magenta][in=60,out=90,looseness=1] (1.7,.8) to (1.2,1.1);
				\node[scale=.8] at (1.2,1.1) {$\color{purple}{\bullet}$};
				\node[draw,thick,rounded corners,fill=white] at (1.2,.6) {$f$};
				\draw[magenta] (1.7,.8) to (1.7,0);
		\end{tikzpicture}} \]
	\end{itemize}
	We refer the reader to \cite{CPJP} for full details that $\textbf{QSys}(\mcal C)$ is a C*-2-category.
\end{defn}

Let us recall the description of canonical inclusion functor *-2-functor $\iota_{\mcal C} : \mcal C \hookrightarrow \textbf{QSys}\left(\mcal C\right)$ already given in \cite[Construction 3.24]{CPJP}.
\begin{itemize}
	\item[(I1)] For $c \in \mcal C_0$, we map $c$ to $1_c \in \mcal C_1(c,c)$ with its obvious Q-system structure as the tensor unit of $\mcal C_1(c,c)$.
	
	\item[(I2)] For a 1-cell  $_a X_b \in \mcal C_1(a,b)$, $X$ itself is a unital Frobenius $1_a$-$1_b$ bimodule object, so we map $X$ to itself.
	
	\item[(I3)] For a 2-cell $f \in \mcal C_2(X,Y)$, we see that $f$ is $1_a - 1_b$ bimodular, so we map $f$ to itself. 
\end{itemize}
For further details about the canonical inclusion functor *-2-functor $\iota_{\mcal C} : \mcal C \hookrightarrow \textbf{QSys}\left(\mcal C\right)$ we refer the reader to \cite{CPJP}. 
\begin{defn}
	A 2-category $\mcal C$ is said to be Q-system complete if the canonical inclusion functor *-2-functor $\iota_{\mcal C} : \mcal C \hookrightarrow \textbf{QSys}\left(\mcal C\right)$ is a *-2-equivalence.
\end{defn}
In \cite{CPJP}, Q-system completion of a C*-2-category $\mcal C$  has been characterised in terms of Q-systems which `splits'.

\begin{thm}\cite[Theorem 3.36]{CPJP}.
	A C*/W*-2-category is said to be Q-system complete if and only if every Q-system $Q \in \mcal C_1(b,b)$ splits, that is, there is an object $c \in \mcal C_0$ and a dualizable 1-cell $X \in \mcal C_1(c,b)$ which admits a unitary separable dual $\left(\ol{X}, ev_X, coev_X \right)$ such that $\left(Q,m,i\right)$ is isomorphic to $_b X \us{c}\boxtimes \ol{X}_b$ as Q-systems.
\end{thm}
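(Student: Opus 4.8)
The plan is to funnel the whole proof through the one fact supplied above: the canonical inclusion $\iota_{\mcal C}:\mcal C\hookrightarrow\textbf{QSys}(\mcal C)$ is always an equivalence on each hom category. A $*$-2-functor between C*/W*-2-categories is a $*$-2-equivalence exactly when it is an equivalence on all hom categories \emph{and}, in addition, essentially surjective on 0-cells (every 0-cell of the target is internally equivalent to the image of a 0-cell). So ``$\mcal C$ is Q-system complete'' collapses to the assertion that for every Q-system $(Q,m,i)\in\mcal C_1(b,b)$ there is $c\in\mcal C_0$ with $Q\simeq\iota_{\mcal C}(c)=1_c$ in $\textbf{QSys}(\mcal C)$, where $1_c$ carries its tensor-unit Q-system structure. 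The entire proof therefore reduces to showing, for a fixed $Q$, that ``$Q\simeq1_c$ in $\textbf{QSys}(\mcal C)$ for some $c$'' is equivalent to ``$Q$ splits''.

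For the direction ``$Q$ splits $\Rightarrow$ $Q\simeq1_c$'', I would take the splitting datum --- a 1-cell $X$ with a unitarily separable left dual $(\ol X,ev_X,coev_X)$ and a Q-system isomorphism of $Q$ with $X\boxtimes_c\ol X$ carrying the canonical structure $m=\t{id}_X\boxtimes ev_X\boxtimes\t{id}_{\ol X}$, $i=coev_X$ --- and exhibit $X$ itself as an equivalence 1-cell between $1_c$ and $Q$ in $\textbf{QSys}(\mcal C)$. Concretely, $X$ gets the trivial right $1_c$-action and the left $Q$-action obtained by transporting along $Q\cong X\boxtimes_c\ol X$ and contracting the inner $\ol X\boxtimes X$ with $ev_X$. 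The bimodule axioms (B1)--(B3) for $X$ then unwind, using the formula for $m$, into the zig-zag identities of the duality, while (B4) for $X$ is precisely $ev_X\circ ev_X^*=\t{id}_{1_c}$, i.e.\ unitary separability; $\ol X$ similarly becomes the opposite bimodule. Finally one computes the relative tensor products: $X\boxtimes_{1_c}\ol X=X\boxtimes\ol X\cong Q$ as $Q$-$Q$ bimodules, with the canonical Q-system structure matching $(m,i)$ by construction, and $\ol X\boxtimes_Q X\cong1_c$ as $1_c$-$1_c$ bimodules, this last isomorphism being induced by $ev_X$ with inverse manufactured from $ev_X^*$ using separability. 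So $X$ is invertible in $\textbf{QSys}(\mcal C)$ and $Q\simeq\iota_{\mcal C}(c)$.

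For the converse I would start from an equivalence $Q\simeq1_c$ in $\textbf{QSys}(\mcal C)$, with underlying 1-cells $X$, $\ol X$ in $\mcal C$ and coherence isomorphisms of bimodules $X\boxtimes_{1_c}\ol X\cong Q$ and $\ol X\boxtimes_Q X\cong1_c$. Since $1_c$ is the trivial Q-system, the first relative tensor product is just $X\boxtimes\ol X$, whereas the second sits inside $\ol X\boxtimes X$ as the image of the balancing idempotent over $Q$. I would read off a candidate evaluation $ev_X:\ol X\boxtimes X\to1_c$ by composing the canonical map $\ol X\boxtimes X\twoheadrightarrow\ol X\boxtimes_Q X$ with the second coherence isomorphism, and a candidate $coev_X:1_c\to X\boxtimes\ol X$ from the unit of the equivalence together with the first coherence isomorphism and the unit map $i$ of $Q$. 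The triangle identities of the equivalence then translate into the zig-zag identities for $(ev_X,coev_X)$, making $\ol X$ a left dual of $X$, and transporting the Q-system structure of $Q$ across the first coherence isomorphism identifies $(Q,m,i)$ with $X\boxtimes_c\ol X$ bearing the canonical structure $(\t{id}_X\boxtimes ev_X\boxtimes\t{id}_{\ol X},coev_X)$. The one genuinely new point is unitary separability $ev_X\circ ev_X^*=\t{id}_{1_c}$, for which I would invoke axiom (Q4): under $Q\cong X\boxtimes\ol X$ the separability relation $m\circ m^*=\t{id}_Q$ whiskers to $\t{id}_X\boxtimes(ev_X\circ ev_X^*)\boxtimes\t{id}_{\ol X}=\t{id}_{X\boxtimes\ol X}$, and cancelling the outer legs with $coev_X$ along a zig-zag gives $ev_X\circ ev_X^*=\t{id}_{1_c}$. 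Hence $Q$ splits.

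I expect the converse to be the main obstacle, and within it two points require care. First, the 1-cells of $\textbf{QSys}(\mcal C)$ compose by relative tensor products over Q-systems, which are defined as images of balancing idempotents; this is exactly where local idempotent completeness of $\mcal C$ (a standing hypothesis) enters, and it means that turning the abstract equivalence into honest 2-cells of $\mcal C$ forces one to drag these idempotent splittings --- and the isometries realizing them --- through every step. Second, one must confirm that the dual extracted is \emph{unitarily} separable and that the coherence isomorphisms can be chosen unitary: this is the content separating Q-system completeness from plain (non-unitary) idempotent completeness, and it hinges on the interaction of the $*$-operation with $m^*$ and with axiom (Q4), so the bulk of the work is $*$-structure bookkeeping, in particular checking that $ev_X$ and $coev_X$ are mutually adjoint up to the correct identifications. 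By comparison, the ``splits $\Rightarrow$ equivalent'' direction is a largely mechanical dictionary between (Q1)--(Q4) and (B1)--(B4) and should go through without surprises.
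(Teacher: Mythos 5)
A preliminary remark: the paper you were given does not prove this statement at all --- it is quoted from \cite[Theorem 3.36]{CPJP} --- so there is no internal proof to compare with; your reduction (use that $\iota_{\mcal C}$ is always an equivalence on hom categories, so completeness amounts to every Q-system being equivalent in $\textbf{QSys}(\mcal C)$ to some $1_c$, and then translate ``$Q\simeq 1_c$'' into ``$Q$ splits'') is indeed the strategy of the original proof, and your ``splits $\Rightarrow$ equivalent'' direction is essentially fine.

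There is, however, a genuine gap in the converse, at exactly the point you single out as the ``one genuinely new point''. You claim $ev_X\circ ev_X^*=\t{id}_{1_c}$ follows from (Q4) by whiskering and ``cancelling the outer legs with $coev_X$ along a zig-zag''. This cancellation is invalid: writing $q:=ev_X\circ ev_X^*\in\t{End}(1_c)$, transporting (Q4) along $Q\cong X\boxtimes\ol X$ gives $\t{id}_X\boxtimes q\boxtimes\t{id}_{\ol X}=\t{id}_{X\boxtimes\ol X}$, but the map $\t{End}(1_c)\ni q\mapsto \t{id}_X\boxtimes q\boxtimes\t{id}_{\ol X}$ need not be injective, and zig-zag manipulations only produce relations such as $q\circ ev_X=ev_X$ and hence $q^2=q$. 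Concretely, if $\t{End}(1_c)\cong\C^2$ and $X$ is supported on a proper summand $e$ of $1_c$ (so $X\boxtimes(1_c-e)=0$), then $X\boxtimes\ol X$ with $m=\t{id}_X\boxtimes ev_X\boxtimes\t{id}_{\ol X}$, $i=coev_X$ satisfies all of (Q1)--(Q4) while $ev_X\circ ev_X^*=e\neq\t{id}_{1_c}$; so (Q4) alone can never yield unitary separability. What actually forces $ev_X\circ ev_X^*=\t{id}_{1_c}$ is the other half of the equivalence datum, $\ol X\boxtimes_Q X\cong 1_c$: defining $ev_X=u\circ\pi$, where $\pi$ is the coisometry splitting the balancing projection (local idempotent completeness gives $\pi\pi^*=\t{id}$) and $u$ is a \emph{unitary} bimodule isomorphism onto $1_c$ (any isomorphism in a C*-category can be replaced by a unitary via polar decomposition), one gets $ev_X\circ ev_X^*=\t{id}_{1_c}$ for free. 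The remaining substance of the converse is then what your outline only gestures at: verifying the zig-zag identities for this $ev_X$ and your $coev_X$ (after upgrading to an adjoint equivalence), and proving --- not merely asserting --- that the bimodule isomorphism $X\boxtimes\ol X\cong Q$ intertwines $(m,i)$ with the canonical Q-system structure coming from the duality.
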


\subsection{Compact quantum groups}\

\begin{defn}
	A compact quantum group is a pair $\left(\mcal A, \Delta\right)$, where $\mcal A$ is a unital C*-algebra and $\Delta : A \to A \otimes A$ is a unital *-homomorphism, called co-multiplication, such that :
	\begin{itemize}
		\item [(i)] $\left(\Delta \otimes 1\right)\Delta = \left(1 \otimes \Delta\right)\Delta $
		\item[(ii)]  the spaces $\left(A \otimes 1\right)\Delta(A)$ and $\left(1 \otimes A\right)\Delta(A)$ are dense in $A \otimes A$.
	\end{itemize}
\end{defn}

\begin{defn}\cite{NT}
	A \textit{right unitary co-representation} of a compact quantum group $\left(\mathbb{G}, \Delta\right)$ on a finite dimensional Hilbert space $H$ is an unitary $U \in B(H) \otimes C(\mathbb{G})$ such that
	  \[\left(1 \otimes \Delta\right)(U) = U_{(12)} U_{(13)} \in B(H) \otimes C(\mathbb{G}) \otimes C(\mathbb{G}) \]
\end{defn}
Similarly, we call an unitary $V \in C(\mathbb{G}) \otimes B(H)$ to be a \textit{left unitary co-representation} if, $\left(\Delta \otimes 1 \right)(V) = V_{(13)} V_{(23)} $

Suppose $\mathbb{G}$ and $\mathbb{H}$ be two compact quantum groups. For a finite dimensional Hilbert space $H$, $H \otimes C(\mathbb{G})$ has a $C(\mathbb{G})$-valued inner product given by,
\[\lab \xi \otimes x , \eta \otimes y \rab_{C(\mathbb{G})} = \lab \xi, \eta \rab x^* y  \ \ \ \ \t{for all} \ \ \xi, \eta \in H \ \ \t{and} \ \ x , y \in C(\mathbb{G}). \]
Similarly, $C(\mathbb{H}) \otimes H$ has a $C(\mathbb{H})$-valued inner product.

\begin{defn}
	A finite dimensional Hilbert space $ V $ is said to be a \textit{right unitary} $\mathbb{G}$- \textit{module} if there exist a linear map $\alpha_{V, \mathbb{G}} : V \to V \otimes C(\mathbb{G}) $ satisfying the following :
	\begin{itemize}
		\item [(i)] $\left(1_V \otimes \Delta \right) \circ \alpha_{V, \mathbb{G}}$ = $\left(\alpha_{V, \mathbb{G}} \otimes 1_{\mathbb{G}}\right) \circ \alpha_{V, \mathbb{G}}$ \ \  \ \ (right co-action)
		\item [(ii)] $\lab \alpha_{V, \mathbb{G}}(\xi), \alpha_{V, \mathbb{G}}(\eta) \rab_{C(\mathbb{G})} = \lab \xi, \eta \rab 1_{\mathbb{G}}$ 
	\end{itemize}
\end{defn}
Similarly, we call a finite dimensional Hilbert space $V$ to be a \textit{left unitary $\mathbb{G}$-module} if there exists a linear map $\alpha_{\mathbb{G}, V} : V \to C(\mathbb{G}) \otimes V $ satisfying the following :
\begin{itemize}
	\item [(i)] $ \left( \Delta \otimes 1_V \right) \circ \alpha_{\mathbb{G,V}} $ = $\left(1_{\mathbb{G}} \otimes \alpha_{\mathbb{G}, V}\right) \circ \alpha_{\mathbb{G}, V} $ \ \ \ \ (left co-action)
	
	\item [(ii)] $\lab \alpha_{\mathbb{G}, V}(\xi), \alpha_{\mathbb{G}, V}(\eta) \rab_{C(\mathbb{G})} = \lab \xi, \eta \rab 1_{\mathbb{G}}$
\end{itemize}

\begin{prop}\cite[Lemma 1.7]{KDC}\label{bimcorr} 
	There is a one-to-one correspodence between right (resp. left) unitary co-representations of $\mathbb{G}$ and right (resp. left) unitary $\mathbb{G}$-modules.
\end{prop}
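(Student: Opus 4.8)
The plan is to exhibit the correspondence explicitly, keeping the underlying finite-dimensional Hilbert space fixed, and then to match the two ``unitary'' conditions. Suppose $U \in B(H) \otimes C(\mathbb{G})$ is a right unitary co-representation; regard it as a unitary operator on the right Hilbert $C(\mathbb{G})$-module $H \otimes C(\mathbb{G})$, put $V := H$, and define
\[
\alpha_{V,\mathbb{G}} : V \to V \otimes C(\mathbb{G}), \qquad \alpha_{V,\mathbb{G}}(\xi) := U(\xi \otimes 1_{\mathbb{G}}).
\]
Writing $\alpha_{V,\mathbb{G}}(\xi) = \sum_i \eta_i \otimes a_i$ and using that $\mathrm{id}_{B(H)} \otimes \Delta$ is a $*$-homomorphism, one gets $(1_V \otimes \Delta)\alpha_{V,\mathbb{G}}(\xi) = (\mathrm{id} \otimes \Delta)(U)(\xi \otimes 1 \otimes 1) = U_{(12)}U_{(13)}(\xi \otimes 1 \otimes 1)$; since $U_{(13)}(\xi \otimes 1 \otimes 1) = \sum_i \eta_i \otimes 1 \otimes a_i$ and then applying $U_{(12)}$ produces $\sum_i \alpha_{V,\mathbb{G}}(\eta_i) \otimes a_i = (\alpha_{V,\mathbb{G}} \otimes 1_{\mathbb{G}})\alpha_{V,\mathbb{G}}(\xi)$, co-action axiom (i) holds. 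Axiom (ii) is $\langle \alpha_{V,\mathbb{G}}(\xi), \alpha_{V,\mathbb{G}}(\eta)\rangle_{C(\mathbb{G})} = \langle \xi \otimes 1, U^*U(\eta \otimes 1)\rangle_{C(\mathbb{G})} = \langle \xi, \eta\rangle 1_{\mathbb{G}}$, using only $U^*U = 1$.

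Conversely, given a right unitary $\mathbb{G}$-module $(V, \alpha_{V,\mathbb{G}})$, fix an orthonormal basis $(e_j)_j$ of $V$, write $\alpha_{V,\mathbb{G}}(e_j) = \sum_i e_i \otimes u_{ij}$ with $u_{ij} \in C(\mathbb{G})$, and set $U := \sum_{i,j} e_{ij} \otimes u_{ij} \in B(H) \otimes C(\mathbb{G})$ (with $e_{ij}$ the matrix units), so that $U(\xi \otimes 1_{\mathbb{G}}) = \alpha_{V,\mathbb{G}}(\xi)$. Then axiom (ii) says $\sum_i u_{ij}^* u_{il} = \delta_{jl} 1_{\mathbb{G}}$, i.e. $U^*U = 1$, and comparing $e_k$-components in axiom (i) gives $\Delta(u_{kj}) = \sum_i u_{ki} \otimes u_{ij}$, i.e. $(1 \otimes \Delta)(U) = U_{(12)}U_{(13)}$. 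The one remaining point is that $U$ is \emph{unitary}, not merely isometric. The comultiplication formula shows the linear span of $\{u_{ij}\}$ is a finite-dimensional subcoalgebra of $C(\mathbb{G})$, so all $u_{ij}$ lie in the canonical dense Hopf $*$-subalgebra $\mathcal{O}(\mathbb{G}) \subseteq C(\mathbb{G})$, which carries a counit $\varepsilon$ and antipode $S$. Applying the $*$-character $\varepsilon$ to $\sum_i u_{ij}^* u_{il} = \delta_{jl}1$ shows the scalar matrix $E = (\varepsilon(u_{ij}))_{i,j}$ satisfies $E^*E = I$, while applying $\varepsilon \otimes \varepsilon$ to $\Delta(u_{kj}) = \sum_i u_{ki} \otimes u_{ij}$ and using $(\varepsilon \otimes \varepsilon)\Delta = \varepsilon$ shows $E^2 = E$; hence $E = I$, i.e. $\varepsilon(u_{ij}) = \delta_{ij}$. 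The antipode axioms then give $\sum_k u_{ik} S(u_{kj}) = \sum_k S(u_{ik}) u_{kj} = \delta_{ij}1$, so $(\mathrm{id}_{B(H)} \otimes S)(U)$ is a two-sided inverse of $U$; with $U^*U = 1$ this forces $U^* = U^{-1}$, hence $UU^* = 1$. The two assignments are clearly mutually inverse, since $U$ is determined by $U(\,\cdot\, \otimes 1_{\mathbb{G}}) = \alpha_{V,\mathbb{G}}$ by right $C(\mathbb{G})$-linearity.

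The left-handed statement follows by the mirror-image argument with the tensor legs interchanged: a left unitary co-representation $W \in C(\mathbb{G}) \otimes B(H)$ yields $\alpha_{\mathbb{G},V}(\xi) := W(1_{\mathbb{G}} \otimes \xi)$ on the left Hilbert $C(\mathbb{G})$-module $C(\mathbb{G}) \otimes H$, with $(\Delta \otimes 1)(W) = W_{(13)}W_{(23)}$ in place of the co-representation identity, and conversely one reads off the matrix of $\alpha_{\mathbb{G},V}$ in an orthonormal basis; the computations are word-for-word the same. I expect the only genuinely non-formal step to be the upgrade from isometry to unitarity in the backward direction — everything else is bookkeeping with leg numbering — and I would isolate it as the standard fact that every finite-dimensional isometric co-representation of a compact quantum group is unitary, whose proof is precisely the antipode computation above (and which is what forces one to pass to $\mathcal{O}(\mathbb{G})$ and to extract the normalisation $\varepsilon(u_{ij}) = \delta_{ij}$ from axioms (i) and (ii)).
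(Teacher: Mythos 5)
Note first that the paper does not actually prove this proposition: it is quoted verbatim with the citation \cite[Lemma 1.7]{KDC}, so there is no internal proof to compare against, and your argument has to stand on its own. Its overall shape is the standard one: the forward direction ($\alpha_{V,\mathbb{G}}(\xi)=U(\xi\otimes 1)$), the matrix bookkeeping in the backward direction ($U=\sum e_{ij}\otimes u_{ij}$, isometry from axiom (ii), the co-representation identity from axiom (i)), and the counit/antipode computation are all correct, and you rightly isolate the upgrade from isometry to unitary as the only non-formal step.

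The gap is precisely in how you justify that step. You assert that because $\mathrm{span}\{u_{ij}\}$ is a finite-dimensional subcoalgebra of $C(\mathbb{G})$, the coefficients $u_{ij}$ lie in the dense Hopf $*$-algebra $\mathcal{O}(\mathbb{G})$. That implication is not a freely citable fact: the standard Peter--Weyl-type theorems place in $\mathcal{O}(\mathbb{G})$ the coefficients of \emph{unitary} (or at least invertible) finite-dimensional co-representations, and unitarity/invertibility of $U$ is exactly what is being proved, so as written the argument is circular at its crux --- your closing remark that the ``standard fact'' is proved ``precisely by the antipode computation above'' presupposes the very $\mathcal{O}(\mathbb{G})$-membership that the antipode computation needs. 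The claim can be repaired, but it requires a genuine argument: for instance, from $\Delta(u_{kj})=\sum_i u_{ki}\otimes u_{ij}$ every translate $(\omega\otimes\mathrm{id})\Delta(u_{kj})$, $\omega\in C(\mathbb{G})^*$, lies in the fixed finite-dimensional space $\mathrm{span}\{u_{ij}\}$, so only finitely many isotypic (Peter--Weyl) components of $u_{kj}$ with respect to the Haar state $h$ are nonzero, whence $u_{kj}\in\mathcal{O}(\mathbb{G})$; but this uses orthogonality of isotypic components and faithfulness of $h$, so one must either pass to the reduced form or say something about non-faithful completions (an issue that is genuinely present here, since the paper's definition of a unitary module imposes no counit or nondegeneracy condition). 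Alternatively, one can set things up as is commonly done in the literature, with the coaction landing in the dense Hopf $*$-algebra or with a nondegeneracy condition included, after which your counit/antipode computation closes the argument verbatim. Everything else in your write-up is fine; it is this one step that is asserted rather than proved.
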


\subsection{Graphical calculus in Hilbert spaces}\

The graphical calculus (also known as string diagram calculus) has already been discussed in \Cref{graphcalc}, in the context of $ 2 $-categories. In this subsection, for the convenience of the reader, we unpack and discuss the graphical calculus in the context of finite dimensional Hilbert spaces. This will be used in \Cref{qbl}.

Throughout the paper, we will denote by $\textbf{Hilb}$, the category of finite dimensional Hilbert spaces with linear maps between them as morphisms. Suppose $H$ and $K$ be two Hilbert spaces and $f : H \to K$ be a linear map. We denote $f$ and the identity maps $1_H$ and $1_K$ graphically as follows : $$f = \raisebox{-6mm}{\begin{tikzpicture}
	\draw (0,-.7) to (0,.7);
	\node[draw,thick,rounded corners,fill=white] at (0,0) {$f$};
	\node[scale=.8] at (.2,-.6) {$H$};
	\node[scale=.8] at (.2,.6) {$K$};
\end{tikzpicture}} \ \ \ \ \ \  1_H = \raisebox{-4mm}{\begin{tikzpicture}
\draw (0,0) to (0,1);
\node[scale=.8] at (.2,.2) {$H$};
\end{tikzpicture}} \ \ \ \  \ \ 1_K = \raisebox{-4mm}{\begin{tikzpicture}
\draw (0,0) to (0,1);
\node[scale=.8] at (.2,.2) {$K$};
\end{tikzpicture}}$$ 
When the Hilbert spaces are clear from the context we will omit the labelling of the strands.

Suppose $f_1 : H \otimes K \to H' \otimes K'$, $f_2 : H \to H'$ and $f_3 : K \to K' $ be linear maps. Then $f_1$ and $f_2 \otimes f_3$ will be denoted as :
\[f_1 \ = \ \raisebox{-6mm}{\begin{tikzpicture}
	\draw (-.3,-.7) to (-.3,.7);
	\draw (.3,-.7) to (.3,.7);
	\node[draw,thick,rounded corners,fill=white,minimum width=30] at (0,0) {$f$};
\end{tikzpicture}} \ \ \ \ \ \ \ \ f_2 \otimes f_3 \ = \ \raisebox{-6mm}{\begin{tikzpicture}
\draw (0,-.7) to (0,.7);
\draw (.8,-.7) to (.8,.7);
\node[draw,thick,rounded corners,fill=white] at (0,0) {$f_2$};
\node[draw,thick,rounded corners,fill=white] at (.8,0) {$f_3$};
\end{tikzpicture}}  \] 
Composition of linear maps $f : H \to K$ and $g :K \to L$ is given by,
\[g \circ f \ = \ \raisebox{-8mm}{\begin{tikzpicture}
	\draw (0,-.6) to (0,1.6);
	\node[draw,thick,rounded corners,fill=white] at (0,1) {$g$};
	\node[draw,thick,rounded corners,fill=white] at (0,0) {$f$};
\end{tikzpicture}} \]

\section{Bimodules of compact quantum groups}\label{bimodules}
In this section, we will build the $ 2 $-category $\mcal G$ whose $ 0 $-cells are compact quantum groups, $ 1 $-cells are unitary bimodules and $ 2 $-cells are intertwiners. Our goal is to study Q-system completion of $\mcal G$. A necessary ingredient that allows one to consider Q-system completion is idempotent completeness. In \Cref{idemthm}, we prove idempotent completeness of $\mcal G$. Finally, in \Cref{qsysrem}, we show that $\mcal G$ is not Q-system complete.

\begin{defn}\label{bimoduledefn}
	A finite dimensional Hilbert space $ V $ is said to be a \textit{unitary} $\mathbb{G}$-$\mathbb{H}$ \textit{bimodule} if there exist linear maps $\alpha_{\mathbb{G}, V} : V \to C(\mathbb{G}) \otimes V $ and $\alpha_{V, \mathbb{H}} : V \to V \otimes C(\mathbb{H}) $ satisfying the following :
	\begin{itemize}
		\item [(i)] $\left( \Delta \otimes 1_V \right) \circ \alpha_{\mathbb{G,V}} $ = $\left(1_{\mathbb{G}} \otimes \alpha_{\mathbb{G}, V}\right) \circ \alpha_{\mathbb{G}, V} $ \ \ \ \ (left co-action)
		\item [(ii)] $\left(1_V \otimes \Delta \right) \circ \alpha_{V, \mathbb{H}}$ = $\left(\alpha_{V, \mathbb{H}} \otimes 1_{\mathbb{H}}\right) \circ \alpha_{V, \mathbb{H}}$ \ \  \ \ (right co-action)
		\item [(iii)] $\left(1_{\mathbb{G}} \otimes \alpha_{V, \mathbb{H}} \right) \circ \alpha_{\mathbb{G}, V}$ = $\left(\alpha_{\mathbb{G}, V} \otimes 1_{\mathbb{H}}\right) \circ \alpha_{V, \mathbb{H}}$ \ \ \ \ (compatibility of left and right co-actions)
		\item [(iv)] $\lab \alpha_{\mathbb{G}, V}(\xi), \alpha_{\mathbb{G}, V}(\eta) \rab_{C(\mathbb{G})} = \lab \xi, \eta \rab 1_{\mathbb{G}}$ and $\lab \alpha_{V, \mathbb{H}}(\xi), \alpha_{V, \mathbb{H}}(\eta) \rab_{C(\mathbb{H})} = \lab \xi, \eta \rab 1_{\mathbb{H}}$ 
	\end{itemize}
\end{defn}

\begin{ex}\label{example}
	We say that a left (resp. right) $\mathbb{G}$ co-action on a Hilbert space $ V $ is trivial if $\alpha_{\mathbb{G}, V} (\xi) = 1_{\mathbb{G}} \otimes \xi $ (resp. $\alpha_{V, \mathbb{G}} (\xi) = \xi \otimes 1_{\mathbb{G}}$), for all $\xi \in V $. Thus, any finite dimensional Hilbert space $H$ equipped with trivial left $\mathbb{G}$ co-action and a right $\mathbb{H}$ co-action becomes a unitary $\mathbb{G}$-$\mathbb{H}$ bimodule.
\end{ex}
\begin{ex}
	Suppose $U \in B(H) \otimes C(\mathbb{G})$ be a unitary co-representation. Then by \Cref{bimcorr}, there is a unitary right co-action of $\mathbb{G}$ on $H$. Taking a trivial left co-action of any compact quantum group $\mathbb{H}$ on $H$ yields a unitary $\mathbb{H}$-$\mathbb{G}$ bimodule $H$.
\end{ex}

\begin{lem}
	The direct sum $V \oplus W$ of two unitary $\mathbb{G}$-$\mathbb{H}$ bimodules $ V $ and $ W $ is again a unitary $\mathbb{G}$-$\mathbb{H}$ bimodule.
\end{lem}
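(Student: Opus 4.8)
The plan is to equip $V \oplus W$ with the obvious ``diagonal'' co-actions and then check the four conditions of \Cref{bimoduledefn} one at a time. Concretely, using the canonical unitary isomorphisms of Hilbert $C(\mathbb{G})$-modules
\[ C(\mathbb{G}) \otimes (V \oplus W) \;\cong\; \bigl(C(\mathbb{G}) \otimes V\bigr) \oplus \bigl(C(\mathbb{G}) \otimes W\bigr) \]
and of Hilbert $C(\mathbb{H})$-modules
\[ (V \oplus W) \otimes C(\mathbb{H}) \;\cong\; \bigl(V \otimes C(\mathbb{H})\bigr) \oplus \bigl(W \otimes C(\mathbb{H})\bigr), \]
I would define $\alpha_{\mathbb{G}, V \oplus W} \coloneqq \alpha_{\mathbb{G}, V} \oplus \alpha_{\mathbb{G}, W}$ and $\alpha_{V \oplus W, \mathbb{H}} \coloneqq \alpha_{V, \mathbb{H}} \oplus \alpha_{W, \mathbb{H}}$ under these identifications. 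Both maps are manifestly linear.

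Next I would verify the co-action identities (i)--(iii). The key observation is that every structure map occurring in those identities ($\Delta \otimes 1$, $1 \otimes \Delta$, and the various $1_{C(\mathbb{G})} \otimes \alpha$, $\alpha \otimes 1_{C(\mathbb{H})}$, $1_{C(\mathbb{G})} \otimes \alpha_{V,\mathbb{H}}$, $\alpha_{\mathbb{G},V} \otimes 1_{C(\mathbb{H})}$) is compatible with the direct-sum decompositions above: for instance $\Delta \otimes 1_{V \oplus W}$ corresponds to $(\Delta \otimes 1_V) \oplus (\Delta \otimes 1_W)$, and $1_{C(\mathbb{G})} \otimes \alpha_{\mathbb{G}, V \oplus W}$ restricts to $1_{C(\mathbb{G})} \otimes \alpha_{\mathbb{G}, V}$ on the summand $C(\mathbb{G}) \otimes V$ and to $1_{C(\mathbb{G})} \otimes \alpha_{\mathbb{G}, W}$ on the other. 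Hence each of (i), (ii), (iii) for $V \oplus W$ decomposes as the direct sum of the corresponding identity for $V$ and the one for $W$, both of which hold by hypothesis.

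For condition (iv) I would use that, under the decompositions above, the $C(\mathbb{G})$-valued inner product on $C(\mathbb{G}) \otimes (V \oplus W)$ is the orthogonal direct sum of those on $C(\mathbb{G}) \otimes V$ and $C(\mathbb{G}) \otimes W$; in particular the ``cross'' inner products between the two summands vanish. Thus for $\xi_i \in V$ and $\eta_i \in W$,
\[ \langle \alpha_{\mathbb{G}, V \oplus W}(\xi_1 \oplus \eta_1),\, \alpha_{\mathbb{G}, V \oplus W}(\xi_2 \oplus \eta_2) \rangle_{C(\mathbb{G})} = \langle \alpha_{\mathbb{G}, V}(\xi_1), \alpha_{\mathbb{G}, V}(\xi_2)\rangle_{C(\mathbb{G})} + \langle \alpha_{\mathbb{G}, W}(\eta_1), \alpha_{\mathbb{G}, W}(\eta_2)\rangle_{C(\mathbb{G})}, \]
which by the isometry property for $V$ and for $W$ equals $\bigl(\langle \xi_1, \xi_2\rangle + \langle \eta_1, \eta_2\rangle\bigr)\, 1_{\mathbb{G}} = \langle \xi_1 \oplus \eta_1, \xi_2 \oplus \eta_2\rangle\, 1_{\mathbb{G}}$. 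The identical computation with $C(\mathbb{H})$ in place of $C(\mathbb{G})$ and $\alpha_{V\oplus W, \mathbb{H}}$ in place of $\alpha_{\mathbb{G}, V\oplus W}$ gives the second half of (iv).

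There is no serious obstacle: the content is entirely formal. The only point that deserves care is to record at the outset that the two displayed isomorphisms are genuinely unitaries of Hilbert C*-modules and are natural with respect to all the maps used, so that passing through them at each step is legitimate; after that, (i)--(iv) follow by the summand-wise bookkeeping sketched above.
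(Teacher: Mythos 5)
Your proof is correct and follows essentially the same route as the paper: the paper defines the co-actions by $\alpha_{\mathbb{G}, V \oplus W} = (1 \otimes u)\,\alpha_{\mathbb{G},V}\,u^* + (1 \otimes v)\,\alpha_{\mathbb{G},W}\,v^*$ (and similarly on the right) using the orthogonal isometries $u,v$ into $V \oplus W$, which is exactly your diagonal direct-sum co-action written through the canonical identifications, and then declares conditions (i)--(iv) a straightforward verification. Your summand-wise bookkeeping, including the vanishing cross terms in (iv), simply spells out that verification.
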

\begin{proof}
	Suppose $u : V \to V \oplus W$ and $v : W \to V \oplus W$ be the orthogonal isometries. Consider the linear maps,
	\[\alpha_{\mathbb{G}, V \oplus W} \coloneqq \left(1 \otimes u\right) \alpha_{\mathbb{G}, V} u^* + \left(1 \otimes v\right) \alpha_{\mathbb{G}, W} v^*  \]
	\[\alpha_{V \oplus W, \mathbb{H}} \coloneqq \left(u \otimes 1\right) \alpha_{V,\mathbb{H}} u^* + \left(v \otimes 1\right) \alpha_{W, \mathbb{H}} v^*  \]
	It is a straight forward verification that $\alpha_{\mathbb{G}, V \oplus W}$ and $\alpha_{V \oplus W, \mathbb{H}}$ satisfy the conditions of \Cref{bimoduledefn}. Thus, $V \oplus W$ becomes a unitary $\mathbb{G}$-$\mathbb{H}$ bimodule.
\end{proof}

In the next result we provide a monoidal structure of unitary bimodules.
\begin{prop}\label{bimtensorproduct}
	Suppose $ V $ is a unitary $\mathbb{G}$-$\mathbb{H}$ bimodule and $ W $ is a unitary $\mathbb{H}$-$\mathbb{K}$ bimodule. Then $ V \otimes W $ is a unitary $\mathbb{G}$-$\mathbb{K}$ bimodule.
\end{prop}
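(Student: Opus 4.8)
The plan is to interpret the composite $V\otimes W$ as the relative tensor product of the $1$-cells $V$ and $W$ over their common middle quantum group $\mathbb{H}$ (i.e.\ the composition of bimodules). Writing $\odot$ for the plain Hilbert-space tensor product, to keep it visibly apart from the bimodule composition, I realise $V\otimes W$ as the subspace of $\mathbb{H}$-balanced vectors
\[V\otimes W\ :=\ \big\{\,\zeta\in V\odot W\ :\ (\alpha_{V,\mathbb{H}}\odot\mathrm{id}_W)(\zeta)=(\mathrm{id}_V\odot\alpha_{\mathbb{H},W})(\zeta)\ \text{ in }\ V\odot C(\mathbb{H})\odot W\,\big\},\]
equipped with the inner product inherited from $V\odot W$; being a subspace of a finite-dimensional Hilbert space, it is again a finite-dimensional Hilbert space. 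As candidate structure maps I take the restrictions
\[\alpha_{\mathbb{G},\,V\otimes W}\ :=\ (\alpha_{\mathbb{G},V}\odot\mathrm{id}_W)\big|_{V\otimes W},\qquad \alpha_{\,V\otimes W,\mathbb{K}}\ :=\ (\mathrm{id}_V\odot\alpha_{W,\mathbb{K}})\big|_{V\otimes W}.\]

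The first thing I would verify --- and really the only non-formal point --- is that these restrictions are well defined, i.e.\ that $\alpha_{\mathbb{G},V}\odot\mathrm{id}_W$ maps the balanced subspace into $C(\mathbb{G})\odot(V\otimes W)$ and $\mathrm{id}_V\odot\alpha_{W,\mathbb{K}}$ maps it into $(V\otimes W)\odot C(\mathbb{K})$. For balanced $\zeta$ I would compute $(\mathrm{id}_{C(\mathbb{G})}\odot\alpha_{V,\mathbb{H}}\odot\mathrm{id}_W)\big((\alpha_{\mathbb{G},V}\odot\mathrm{id}_W)(\zeta)\big)$ in $C(\mathbb{G})\odot V\odot C(\mathbb{H})\odot W$: first rewrite the inner composite $(\mathrm{id}_{C(\mathbb{G})}\odot\alpha_{V,\mathbb{H}})\circ\alpha_{\mathbb{G},V}$ as $(\alpha_{\mathbb{G},V}\odot\mathrm{id}_{C(\mathbb{H})})\circ\alpha_{V,\mathbb{H}}$ by the compatibility condition (iii) for the bimodule $V$; next replace $(\alpha_{V,\mathbb{H}}\odot\mathrm{id}_W)(\zeta)$ by $(\mathrm{id}_V\odot\alpha_{\mathbb{H},W})(\zeta)$ since $\zeta$ is balanced; and finally use that $\alpha_{\mathbb{G},V}$, acting on the $V$ factor, and $\alpha_{\mathbb{H},W}$, acting on the $W$ factor, touch disjoint tensor legs and so commute. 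The end of this chain is exactly that $(\alpha_{\mathbb{G},V}\odot\mathrm{id}_W)(\zeta)$ is again $\mathbb{H}$-balanced, hence lies in $C(\mathbb{G})\odot(V\otimes W)$. The argument for $\mathrm{id}_V\odot\alpha_{W,\mathbb{K}}$ is symmetric and uses the compatibility condition (iii) for $W$. This step is where one genuinely uses that $V$ and $W$ are bimodules rather than one-sided modules, and keeping the tensor-leg bookkeeping straight is the main thing to be careful about; I expect it to be the chief obstacle.

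Once the two coactions are installed on $V\otimes W$, the four conditions of \Cref{bimoduledefn} follow by restriction. Condition (i) for $V\otimes W$ is condition (i) for $V$ tensored with $\mathrm{id}_W$; condition (ii) for $V\otimes W$ is condition (ii) for $W$ tensored with $\mathrm{id}_V$; and condition (iii) for $V\otimes W$ holds because $\alpha_{\mathbb{G},V}\odot\mathrm{id}_W$ and $\mathrm{id}_V\odot\alpha_{W,\mathbb{K}}$ act on disjoint legs, so the two composites in (iii) both equal $\alpha_{\mathbb{G},V}\odot\alpha_{W,\mathbb{K}}$ after the evident reordering. For the isometry condition (iv), I would record that already on $V\odot W$, for elementary tensors,
\[\big\langle(\alpha_{\mathbb{G},V}\odot\mathrm{id}_W)(v\otimes w),\,(\alpha_{\mathbb{G},V}\odot\mathrm{id}_W)(v'\otimes w')\big\rangle_{C(\mathbb{G})}=\langle w,w'\rangle\,\big\langle\alpha_{\mathbb{G},V}(v),\alpha_{\mathbb{G},V}(v')\big\rangle_{C(\mathbb{G})}=\langle v\otimes w,\,v'\otimes w'\rangle\,1_{\mathbb{G}},\]
using the definition of the $C(\mathbb{G})$-valued inner product on $C(\mathbb{G})\odot(V\odot W)$ together with condition (iv) for $V$; this identity persists verbatim on the subspace $V\otimes W$ with the inherited inner product, so no rescaling is needed, and symmetrically for $\alpha_{V\otimes W,\mathbb{K}}$ using condition (iv) for $W$. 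Assembling these pieces shows that $(V\otimes W,\,\alpha_{\mathbb{G},V\otimes W},\,\alpha_{V\otimes W,\mathbb{K}})$ is a unitary $\mathbb{G}$-$\mathbb{K}$ bimodule, as claimed. To summarise, the only point needing real work is the well-definedness of the residual coactions on the $\mathbb{H}$-balanced subspace; everything else is inherited from $V$ and $W$ by restriction and by disjointness of tensor legs.
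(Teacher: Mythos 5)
You have quietly changed the object the proposition is about. In the paper, $V\otimes W$ is the ordinary Hilbert-space tensor product --- this is stated explicitly right after the proposition (``the tensor product of two $1$-cells \dots is given by the usual tensor product of Hilbert spaces $V\otimes W$''), and the $2$-category $\mcal G$ is built with that plain tensor product as horizontal composition, the middle $\mathbb{H}$-coactions being simply discarded. Accordingly, the paper's proof puts $\alpha_{\mathbb{G},V\otimes W}:=\alpha_{\mathbb{G},V}\otimes 1_W$ and $\alpha_{V\otimes W,\mathbb{K}}:=1_V\otimes\alpha_{W,\mathbb{K}}$ on all of $V\otimes W$ and checks (i)--(iv) of \Cref{bimoduledefn} by short direct computations; the quantum group $\mathbb{H}$ plays no role at all. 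Your construction instead takes the subspace of $\mathbb{H}$-balanced vectors --- a relative tensor product over $\mathbb{H}$ --- which is a genuinely different (generally smaller) Hilbert space. Consequently the step you single out as ``the only non-formal point,'' namely that the two coactions preserve the balanced subspace, is an obstacle you have created for yourself: on the full tensor product there is nothing to check there, and the remaining verifications you list (conditions (i)--(iii) by disjointness of tensor legs, condition (iv) by sesquilinearity from elementary tensors) are exactly the paper's argument.

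To be clear, your balanced-subspace construction is internally sound: the preservation argument via compatibility (iii) for $V$ and the balancedness of $\zeta$ is correct, and the inherited structure does make the balanced subspace a unitary $\mathbb{G}$-$\mathbb{K}$ bimodule. But it proves a modified statement, not the one the paper needs: if one used your composition, the $2$-category $\mcal G$ of \Cref{bimodules} would be a different $2$-category (and one would then have to revisit associativity of composition, the later identification of $\mcal G_1(\mathbb{G},\mathbb{H})$ with categories of corepresentations, and the Q-system computations in \Cref{qbl}, all of which are carried out for the plain tensor product). Strip out the balancedness condition, keep your formulas for the two coactions and your verifications of (i)--(iv) on the whole of $V\odot W$, and you recover the paper's proof.
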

\begin{proof}
	We first define the left and right co-actions on $ V \otimes W$. Define 
	\[\alpha_{\mathbb{G}, V \otimes W} \coloneqq \alpha_{\mathbb{G}, V} \otimes 1_W  \ \ \ \ \t{and} \ \ \ \ \alpha_{V \otimes W, \mathbb{K}} \coloneqq 1_V \otimes \alpha_{  W, \mathbb{K}}\]
	\begin{align*}
		\left(\Delta \otimes 1_{V \otimes W} \right) \circ \alpha_{\mathbb{G}, V \otimes W} &= \left(\Delta \otimes 1_V \right) \circ \alpha_{\mathbb{G}, V } \otimes W \\
		&= \left(1_{\mathbb{G}} \otimes \alpha_{\mathbb{G}, V }\right) \circ \alpha_{\mathbb{G}, V } \otimes 1_W  \ \ (\t{Since $V$ has a left $\mathbb{G}$ co-action})
	\end{align*}
We also have that,
\begin{align*}
	\left(1_{\mathbb{G}} \otimes \alpha_{\mathbb{G}, V \otimes W}\right) \circ \alpha_{\mathbb{G}, V \otimes W} &= \left(1_{\mathbb{G}} \otimes \alpha_{\mathbb{G}, V } \otimes 1_W \right) \circ \left(\alpha_{\mathbb{G}, V } \otimes 1_W \right) \\
	&= \left(1_{\mathbb{G}} \otimes \alpha_{\mathbb{G}, V }\right) \circ \alpha_{\mathbb{G}, V } \otimes 1_W
\end{align*}
Therefore, $\left(1_{\mathbb{G}} \otimes \alpha_{\mathbb{G}, V \otimes W}\right) \circ \alpha_{\mathbb{G}, V \otimes W} = \left(1_{\mathbb{G}} \otimes \alpha_{\mathbb{G}, V \otimes W}\right) \circ \alpha_{\mathbb{G}, V \otimes W}$. Thus, $\alpha_{\mathbb{G}, V \otimes W}$ satisfy condition (i) of \Cref{bimoduledefn}. In a similar way, it can be shown that $\alpha_{V \otimes W, \mathbb{K}}$ satisfy condition (ii) of \Cref{bimoduledefn}. We now prove the compatibility of the left and right co-actions. We have,
\[\left(\alpha_{\mathbb{G}, V \otimes W} \otimes 1_{\mathbb{K}}\right) \circ \alpha_{V \otimes W, \mathbb{K}} = \left(\alpha_{\mathbb{G}, V } \otimes 1_W \otimes 1_{\mathbb{K}}\right) \circ \left(1_V \otimes \alpha_{  W, \mathbb{K}}\right) = \alpha_{\mathbb{G}, V } \otimes \alpha_{  W, \mathbb{K}} \] 
\[\left(1_{\mathbb{G}} \otimes \alpha_{V \otimes W, \mathbb{K}}\right) \circ \alpha_{\mathbb{G}, V \otimes W} = \left(1_{\mathbb{G}} \otimes 1_V \otimes \alpha_{  W, \mathbb{K}} \right) \circ \left(\alpha_{\mathbb{G}, V } \otimes 1_W \right) = \alpha_{\mathbb{G}, V } \otimes \alpha_{  W, \mathbb{K}} \]
Thus, we get $\left(\alpha_{\mathbb{G}, V \otimes W} \otimes 1_{\mathbb{K}}\right) \circ \alpha_{V \otimes W, \mathbb{K}} = \left(1_{\mathbb{G}} \otimes \alpha_{V \otimes W, \mathbb{K}}\right) \circ \alpha_{\mathbb{G}, V \otimes W}$. Hence, we have condition (iii) of \Cref{bimoduledefn} for $\alpha_{\mathbb{G}, V \otimes W}$ and $\alpha_{V \otimes W, \mathbb{K}}$. Finally, it is trivial to verify condition (iv) for $\alpha_{\mathbb{G}, V \otimes W}$ and $\alpha_{V \otimes W, \mathbb{K}}$.  
\end{proof}
\begin{defn}
	Suppose $ V $ and $ W $ be two unitary $\mathbb{G}$-$\mathbb{H}$ bimodules. A linear map $T : V \to W$ is said to be a $\mathbb{G}$-$\mathbb{H}$ \textit{intertwiner} if the following holds:
	\begin{itemize}
		\item [(i)] $\left(1_{\mathbb{G}} \otimes T\right) \circ \alpha_{\mathbb{G}, V } = \alpha_{\mathbb{G}, W} \circ T $
		\item [(ii)] $\left(T \otimes 1_{\mathbb{H}} \right) \circ \alpha_{V , \mathbb{H}} = \alpha_{  W, \mathbb{H}} \circ T $
		\end{itemize}
\end{defn}

\begin{lem}\label{intertwincompos}
	Composition of two intertwiners is an intertwiner.
\end{lem}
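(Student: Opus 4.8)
The plan is to take intertwiners $S : V \to W$ and $T : W \to Z$ between unitary $\mathbb{G}$-$\mathbb{H}$ bimodules and verify directly that the set-theoretic composite $T \circ S : V \to Z$ satisfies conditions (i) and (ii) of the definition of a $\mathbb{G}$-$\mathbb{H}$ intertwiner. The only structural fact used is the bifunctoriality (interchange law) of the tensor product of linear maps, i.e. $1_{\mathbb{G}} \otimes (T \circ S) = (1_{\mathbb{G}} \otimes T) \circ (1_{\mathbb{G}} \otimes S)$ and, symmetrically, $(T \circ S) \otimes 1_{\mathbb{H}} = (T \otimes 1_{\mathbb{H}}) \circ (S \otimes 1_{\mathbb{H}})$; I would state this explicitly first so that the two displays below are transparent.

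For condition (i), I would compute
\[
\left(1_{\mathbb{G}} \otimes (T \circ S)\right) \circ \alpha_{\mathbb{G}, V} = \left(1_{\mathbb{G}} \otimes T\right) \circ \left(1_{\mathbb{G}} \otimes S\right) \circ \alpha_{\mathbb{G}, V} = \left(1_{\mathbb{G}} \otimes T\right) \circ \alpha_{\mathbb{G}, W} \circ S = \alpha_{\mathbb{G}, Z} \circ T \circ S,
\]
where the second equality is the intertwiner relation (i) for $S$ (applied to the left $\mathbb{G}$-coactions on $V$ and $W$) and the third is the intertwiner relation (i) for $T$ (applied to the left $\mathbb{G}$-coactions on $W$ and $Z$). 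Condition (ii) is entirely analogous, now using the right $\mathbb{H}$-coactions and the interchange law on the $\mathbb{H}$-side:
\[
\left((T \circ S) \otimes 1_{\mathbb{H}}\right) \circ \alpha_{V, \mathbb{H}} = \left(T \otimes 1_{\mathbb{H}}\right) \circ \left(S \otimes 1_{\mathbb{H}}\right) \circ \alpha_{V, \mathbb{H}} = \left(T \otimes 1_{\mathbb{H}}\right) \circ \alpha_{W, \mathbb{H}} \circ S = \alpha_{Z, \mathbb{H}} \circ T \circ S.
\]
Since both defining equations hold for $T \circ S$, it is a $\mathbb{G}$-$\mathbb{H}$ intertwiner.

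There is essentially no obstacle here: the statement is a formal consequence of the defining equations together with functoriality of $\otimes$, and diagrammatically it simply amounts to sliding the box for $S$ past a coaction node and then sliding the box for $T$ past it as well. The only point requiring mild care is the bookkeeping of which tensor factor each identity map occupies, which is exactly why I would isolate the interchange law before performing the two computations. One could alternatively phrase the whole argument in the string-diagram calculus of \Cref{graphcalc}, but the algebraic version above is shorter and self-contained.
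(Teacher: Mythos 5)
Your proof is correct and follows essentially the same route as the paper: a direct verification of both defining conditions using the intertwiner relations for each map in turn, with the interchange law for $\otimes$ supplying the needed factorization (the paper leaves this implicit, while you state it explicitly, which is a minor stylistic improvement). No gaps.
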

\begin{proof}
	Suppose $ V $, $ W $ and $ Z $ be unitary $\mathbb{G}$-$\mathbb{H}$ bimodules. Let $T : V \to W $ and $S : W \to Z $ be two intertwiners. We then have the following equations :
	\[\left(1 \otimes T\right) \circ \alpha_{\mathbb{G}, V } = \alpha_{\mathbb{G}, W} \circ T \ \ \ \ \ \ \left(T \otimes 1 \right) \circ \alpha_{V , \mathbb{H}} = \alpha_{  W, \mathbb{H}} \circ T \]
	\[\left(1 \otimes S\right) \circ \alpha_{\mathbb{G}, W } = \alpha_{\mathbb{G}, Z} \circ S \ \ \ \ \ \ \left(S \otimes 1 \right) \circ \alpha_{W , \mathbb{H}} = \alpha_{  Z, \mathbb{H}} \circ S \]
	Using the above equations we derive the following equations :
	\[ \left(1 \otimes S\circ T \right) \circ \alpha_{\mathbb{G}, V } = \left(1 \otimes S \right) \circ \alpha_{\mathbb{G}, W } \circ T = \alpha_{\mathbb{G}, Z} \circ S \circ T \]
	\[ \left(S \circ T \otimes 1 \right) \circ \alpha_{V , \mathbb{H}} = \left(S \otimes 1 \right) \circ \alpha_{  W, \mathbb{H}} \circ T = \alpha_{  Z, \mathbb{H}} \circ S \circ T \]
	Thus, $S \circ T : V \to Z $ is an intertwiner.
\end{proof}
We  now explore the monoidal structure of intertwiners.
\begin{prop}\label{intertwintensor}
	Suppose $V_1$ and $V_2$ be two unitary $\mathbb{G}$-$\mathbb{H}$ bimodules, and $W_1$ and $W_2$ be two unitary $\mathbb{H}$-$\mathbb{K}$ bimodules. Let $T : V_1 \to V_2$ be a $\mathbb{G}$-$\mathbb{H}$ intertwiner  and $S : W_1 \to W_2 $ be a $\mathbb{H}$-$\mathbb{K}$ intertwiner. Then, $T \otimes S : V_1 \otimes W_1 \to V_2 \otimes W_2 $ is a $\mathbb{G}$-$\mathbb{K}$ intertwiner. 
\end{prop}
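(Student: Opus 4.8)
The plan is to verify the two defining conditions of a $\mathbb{G}$-$\mathbb{K}$ intertwiner for $T \otimes S$ by a direct computation, using the explicit description of the tensor-product bimodule from \Cref{bimtensorproduct} together with the interchange law $(f \otimes g) \circ (h \otimes k) = (f \circ h) \otimes (g \circ k)$ for linear maps between finite dimensional Hilbert spaces.

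First, recall from (the proof of) \Cref{bimtensorproduct} that $\alpha_{\mathbb{G}, V_i \otimes W_i} = \alpha_{\mathbb{G}, V_i} \otimes 1_{W_i}$ and $\alpha_{V_i \otimes W_i, \mathbb{K}} = 1_{V_i} \otimes \alpha_{W_i, \mathbb{K}}$ for $i = 1, 2$. For the left-coaction condition, I would start from $(1_{\mathbb{G}} \otimes (T \otimes S)) \circ \alpha_{\mathbb{G}, V_1 \otimes W_1} = (1_{\mathbb{G}} \otimes T \otimes S) \circ (\alpha_{\mathbb{G}, V_1} \otimes 1_{W_1})$, regroup the tensor factors as $((1_{\mathbb{G}} \otimes T) \circ \alpha_{\mathbb{G}, V_1}) \otimes (S \circ 1_{W_1})$, apply the intertwiner identity $(1_{\mathbb{G}} \otimes T) \circ \alpha_{\mathbb{G}, V_1} = \alpha_{\mathbb{G}, V_2} \circ T$ for $T$, and regroup once more to obtain $(\alpha_{\mathbb{G}, V_2} \otimes 1_{W_2}) \circ (T \otimes S) = \alpha_{\mathbb{G}, V_2 \otimes W_2} \circ (T \otimes S)$, which is exactly condition (i). The right-coaction condition (ii) is proved in the mirror-image way: beginning from $((T \otimes S) \otimes 1_{\mathbb{K}}) \circ (1_{V_1} \otimes \alpha_{W_1, \mathbb{K}})$, regroup as $(T \circ 1_{V_1}) \otimes ((S \otimes 1_{\mathbb{K}}) \circ \alpha_{W_1, \mathbb{K}})$, invoke the intertwiner identity for $S$, and regroup to reach $\alpha_{V_2 \otimes W_2, \mathbb{K}} \circ (T \otimes S)$.

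There is no genuine obstacle here; the statement is routine bookkeeping, of the same flavour as \Cref{intertwincompos} and \Cref{bimtensorproduct}. The only point deserving a little attention is to keep track of which tensor leg each map acts on when applying the interchange law --- $T$ on the $V$-leg, $S$ on the $W$-leg, and the comultiplication output on the outermost leg ($C(\mathbb{G})$ on the left, $C(\mathbb{K})$ on the right) --- which is precisely how the coactions on $V_i \otimes W_i$ were set up in \Cref{bimtensorproduct}. In graphical terms, the wire carrying $T \otimes S$ and the cap implementing the relevant coaction never cross, so both sides of each identity are manifestly the same diagram.
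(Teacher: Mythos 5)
Your argument is correct and is essentially the same as the paper's: both use the explicit coactions $\alpha_{\mathbb{G}, V_i \otimes W_i} = \alpha_{\mathbb{G}, V_i} \otimes 1_{W_i}$, $\alpha_{V_i \otimes W_i, \mathbb{K}} = 1_{V_i} \otimes \alpha_{W_i, \mathbb{K}}$, then apply the interchange law to isolate the intertwiner identity for $T$ (resp.\ $S$) and regroup. The only difference is cosmetic: you spell out the right-coaction case that the paper dismisses as ``in a similar fashion.''
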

\begin{proof}
	We have the following equations :
	\[\left(1 \otimes T\right) \circ \alpha_{\mathbb{G}, V_1 } = \alpha_{\mathbb{G}, V_2 } \circ T \ \ \ \ \ \ \left(T \otimes 1 \right) \circ \alpha_{V_1 , \mathbb{H}} = \alpha_{  V_2, \mathbb{H}} \circ T \]
	\[\left(1 \otimes S\right) \circ \alpha_{\mathbb{G}, W } = \alpha_{\mathbb{G}, Z} \circ S \ \ \ \ \ \ \left(S \otimes 1 \right) \circ \alpha_{W , \mathbb{H}} = \alpha_{  Z, \mathbb{H}} \circ S \]
	Now, \begin{align*}
		 \left(1 \otimes T \otimes S \right) \circ \alpha_{\mathbb{G}, V_1 \otimes W_1 } &= \left(1 \otimes T \otimes S \right) \circ \left(\alpha_{\mathbb{G},V_1} \otimes 1_{W_1}\right) \\
		 &= \left(1 \otimes 1_{V_2} \otimes S\right) \circ \left(\left(1 \otimes T\right) \circ \alpha_{\mathbb{G}, V_1} \otimes 1_{W_1}\right) \\
		 &= \left(1 \otimes 1_{V_2} \otimes S\right) \circ \left(\alpha_{\mathbb{G}, V_2 } \circ T \otimes 1_{W_1}\right) \\
		 &= \left(\alpha_{\mathbb{G}, V_2} \otimes 1_{W_2} \right) \circ \left(T \otimes S \right)\\
		 &= \alpha_{\mathbb{G}, V_2 \otimes W_2} \circ \left(T \otimes S \right)
		\end{align*}
	In a similar fashion we obtain, $\left(T \otimes S \otimes 1\right) \circ \alpha_{V_1 \otimes W_1, \mathbb{K}} = \alpha_{V_2 \otimes W_2, \mathbb{K}} \circ \left(T \otimes S\right)$. This concludes the proposition. 
\end{proof}

We are now in a position to define the $ 2 $-category of compact quantum groups
\begin{defn}
	The $ 2 $-category $\mcal G$ is defined as follows :
	\begin{itemize}
		\item $ 0 $-cells are compact quantum groups.
		\item $ 1 $-cells between compact quantum groups $\mathbb{G}$ and $\mathbb{H}$, consist of unitary $\mathbb{G}$-$\mathbb{H}$ bimodules.
		\item $ 2 $-cells between unitary $\mathbb{G}$-$\mathbb{H}$ bimodules $V$ and $W$ are $\mathbb{G}$-$\mathbb{H}$ intertwiners.  
	\end{itemize}
\end{defn}
The tensor product of two $ 1 $-cells $V \in \mcal G_1 (\mathbb{G},\mathbb{H})$ and $W \in \mcal G_1 (\mathbb{H},\mathbb{K})$ is given by the usual tensor product of Hilbert spaces $V \otimes W$. From \Cref{bimtensorproduct}, it indeed follows that $V \otimes W \in \mcal G_1 (\mathbb{G},\mathbb{K})$.

\begin{rem}\label{remstrict}
	In the subsequent discussions, we will treat the C*-tensor category of finite dimensional Hilbert spaces to be strict using the linear maps between the Hilbert spaces and the Cuntz algebra (see \cite{NT}).
\end{rem}

\begin{prop}
	$\mcal G$ forms a C*-2-category.
\end{prop}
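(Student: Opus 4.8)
The plan is to verify that $\mcal G$ satisfies all the axioms of a C*-2-category by assembling the ingredients already established. First I would observe that the underlying 2-category structure is in place: by \Cref{bimtensorproduct} the tensor product of composable 1-cells is again a unitary bimodule, by \Cref{intertwincompos} composition of 2-cells is closed, and by \Cref{intertwintensor} the horizontal (tensor) product of 2-cells is closed; associativity and unit constraints for both vertical and horizontal composition are inherited from $\textbf{Hilb}$, which by \Cref{remstrict} we treat as strict. The identity 1-cell on a compact quantum group $\mathbb{G}$ is the one-dimensional Hilbert space $\C$ with trivial left and right $\mathbb{G}$ co-actions (a special case of \Cref{example}), and one checks the interchange law holds because it already holds at the level of linear maps.

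Next I would address the $*$-structure. For a 2-cell $T : V \to W$ between unitary $\mathbb{G}$-$\mathbb{H}$ bimodules, define $T^*$ to be the Hilbert-space adjoint $T^* : W \to V$. The key verification is that $T^*$ is again a $\mathbb{G}$-$\mathbb{H}$ intertwiner: taking adjoints in the intertwiner identities $(1_{\mathbb{G}} \otimes T)\circ \alpha_{\mathbb{G},V} = \alpha_{\mathbb{G},W}\circ T$ and $(T \otimes 1_{\mathbb{H}})\circ \alpha_{V,\mathbb{H}} = \alpha_{W,\mathbb{H}}\circ T$, and using that the co-actions $\alpha$ are isometries (condition (iv) of \Cref{bimoduledefn}, so that $\alpha^*\alpha = \mathrm{id}$), one recovers the corresponding identities for $T^*$. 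This is essentially the same computation that shows a co-action being an isometry forces $\alpha^*$ to implement the inverse correspondence; I would write out the one-line manipulation $(1 \otimes T^*)\circ \alpha_{\mathbb{G},W} = \alpha_{\mathbb{G},V}\circ \alpha_{\mathbb{G},V}^*\circ(1 \otimes T^*)\circ \alpha_{\mathbb{G},W} = \alpha_{\mathbb{G},V}\circ(1\otimes T)^*\circ \alpha_{\mathbb{G},W}^*\circ \alpha_{\mathbb{G},W}\circ\dots$ style argument, or more simply apply $*$ directly to the defining equations. That $(-)^*$ is antilinear, involutive, and contravariant on compositions, and compatible with the tensor product ($(S \otimes T)^* = S^* \otimes T^*$), is immediate from the corresponding facts in $\textbf{Hilb}$.

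Finally I would check the analytic conditions making each hom-category $\mcal G_2(V,W)$ a Banach space and each $\mcal G_1(\mathbb{G},\mathbb{H})$ a C*-category: the space of intertwiners $V \to W$ is a norm-closed subspace of $B(V,W)$ since the intertwiner conditions are closed conditions, hence complete; the C*-identity $\|T^*T\| = \|T\|^2$ is inherited from $B(V,V)$; and positivity of $T^*T$ together with the $C^*$-norm condition on horizontal composition ($\|S \otimes T\| \le \|S\|\|T\|$, in fact equality) again descend from $\textbf{Hilb}$. The main obstacle — though it is more a matter of care than of genuine difficulty — is the closure of the $*$-operation on 2-cells, i.e.\ showing $T^*$ remains an intertwiner; everything else is a routine transport of structure from the C*-tensor category of finite-dimensional Hilbert spaces. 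I would therefore state the proof as: ``All the 2-categorical axioms and the $*$-structure axioms follow from the corresponding properties of $\textbf{Hilb}$ via Propositions~\ref{bimtensorproduct}, \ref{intertwincompos} and \ref{intertwintensor}; the only point requiring argument is that the adjoint of an intertwiner is an intertwiner, which follows by applying $*$ to the defining equations and using that the co-actions are isometries.''
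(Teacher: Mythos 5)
Your overall architecture matches the paper's proof: closure under tensor product of 1-cells from \Cref{bimtensorproduct}, closure of 2-cells under composition and tensor product from \Cref{intertwincompos} and \Cref{intertwintensor}, the interchange law holding trivially because 2-cells are linear maps, and strictness of associators and unitors via \Cref{remstrict}. Where you diverge is the C*-structure of the hom-categories: the paper does not verify this by hand but invokes \Cref{bimcorr} together with \cite[Lemma 1.10]{KDC}, i.e.\ it transports the problem to unitary co-representations, where it is known that the categories in question are C*-categories (in particular that intertwiner spaces are closed under adjoints and norm-closed).

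Your direct argument for the one step you correctly single out as the crux --- that $T^*$ is again an intertwiner --- has a gap. Applying $*$ to $(1_{\mathbb G}\otimes T)\circ\alpha_{\mathbb G,V}=\alpha_{\mathbb G,W}\circ T$ produces an identity involving $\alpha_{\mathbb G,V}^*$ and $\alpha_{\mathbb G,W}^*$, not the desired identity $(1_{\mathbb G}\otimes T^*)\circ\alpha_{\mathbb G,W}=\alpha_{\mathbb G,V}\circ T^*$, and condition (iv) of \Cref{bimoduledefn} does not bridge this: it says $\alpha$ is isometric for the $C(\mathbb G)$-valued inner product, which gives (at best) $\alpha^*\alpha=\mathrm{id}$, whereas your manipulation inserts $\alpha_{\mathbb G,V}\circ\alpha_{\mathbb G,V}^*$ in front of $(1\otimes T^*)\circ\alpha_{\mathbb G,W}$ --- legitimate only if the range of the latter already lies in the range of $\alpha_{\mathbb G,V}$, which is essentially the statement being proved. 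There is also a more basic issue: since $C(\mathbb G)$ is an arbitrary unital C*-algebra, $C(\mathbb G)\otimes V$ is a Hilbert $C(\mathbb G)$-module rather than a Hilbert space, so ``the Hilbert-space adjoint $\alpha^*$'' is not even defined without further argument. The standard repair is exactly the paper's route: by \Cref{bimcorr} write $\alpha_{\mathbb G,V}(\xi)=U_V(1\otimes\xi)$ with $U_V$ a \emph{unitary} co-representation; then the intertwiner condition reads $(1\otimes T)U_V=U_W(1\otimes T)$, and taking adjoints and multiplying by $U_V$ on the left and $U_W$ on the right (using unitarity, not mere isometry) yields $(1\otimes T^*)U_W=U_V(1\otimes T^*)$, i.e.\ $T^*$ is an intertwiner. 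The remaining analytic points you list (norm-closedness of intertwiner spaces, the C*-identity, submultiplicativity of the tensor product) are fine and do descend from $\textbf{Hilb}$, but as written your proof of the adjoint-closure step does not go through without passing to the unitary co-representation picture (or citing \cite[Lemma 1.10]{KDC} as the paper does).
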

 \begin{proof}
 	For two compact quantum groups $\mathbb{G}$ and $\mathbb{H}$, using \Cref{bimcorr} and \cite[Lemma 1.10]{KDC}, we get that each hom category $\mcal G_1 (\mathbb{G}, \mathbb{H})$ is a C*-category. Using \Cref{bimtensorproduct}, we see that $\mcal G$ is closed under tensor product of $ 1 $-cells, and from \Cref{intertwincompos} and \Cref{intertwintensor}, we get that $\mcal G$ is closed under composition and tensor product of $ 2 $-cells. 
 	To verify the compatibility between composition and tensor product of $ 2 $-cells in $\mcal G$, observe that, intertwiners being linear maps between finite dimensional Hilbert spaces trivially satisfy the compatibility condition. Finally, using \Cref{remstrict}, we take the associators and unitors of $\mcal G$ to be the identity maps.    
 \end{proof}

Our goal is to explore the Q-system completion of $\mcal G$. In the next result, we prove that $\mcal G$ is locally idempotent complete, and that will enable us to consider the Q-system completion of $\mcal G$.

\begin{thm}\label{idemthm}
	$\mcal G$ is locally idempotent complete.
\end{thm}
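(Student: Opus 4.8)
The plan is to take a projection $p \in \operatorname{End}(V)$ for a unitary $\mathbb{G}$-$\mathbb{H}$ bimodule $V$ and exhibit a sub-bimodule onto which $p$ projects. Since $V$ is a finite dimensional Hilbert space and $p = p^* = p^2$ is an intertwiner, I first pass to the honest Hilbert subspace $W \coloneqq pV \subseteq V$, with $v \colon W \hookrightarrow V$ the inclusion isometry, so that $v v^* = p$ and $v^* v = 1_W$. The work is then to put a unitary $\mathbb{G}$-$\mathbb{H}$ bimodule structure on $W$ for which $v$ becomes a $\mathbb{G}$-$\mathbb{H}$ intertwiner; local idempotent completeness follows immediately.

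Concretely, I would define the co-actions on $W$ by compression:
\[
\alpha_{\mathbb{G}, W} \coloneqq \left(1_{\mathbb{G}} \otimes v^*\right) \circ \alpha_{\mathbb{G}, V} \circ v, \qquad \alpha_{W, \mathbb{H}} \coloneqq \left(v^* \otimes 1_{\mathbb{H}}\right) \circ \alpha_{V, \mathbb{H}} \circ v.
\]
The key point that makes the compression work is the intertwiner property of $p$: because $p$ is a $\mathbb{G}$-$\mathbb{H}$ intertwiner we have $\left(1_{\mathbb{G}} \otimes p\right) \circ \alpha_{\mathbb{G}, V} = \alpha_{\mathbb{G}, V} \circ p$ and $\left(p \otimes 1_{\mathbb{H}}\right) \circ \alpha_{V, \mathbb{H}} = \alpha_{V, \mathbb{H}} \circ p$, and substituting $p = v v^*$ lets me freely insert and delete the idempotent $v v^*$ inside composites of co-actions. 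So when I check the left co-action identity (i) of \Cref{bimoduledefn} for $\alpha_{\mathbb{G}, W}$, the nested applications produce a factor $\left(1_{\mathbb{G}} \otimes v v^* \otimes 1_{\mathbb{G}}\right)$ in the middle, which I rewrite using $p$-intertwining and then collapse via $v^* v = 1_W$; the right co-action identity (ii) is symmetric, and compatibility (iii) works the same way since the $\mathbb{G}$-leg and $\mathbb{H}$-leg compressions act on disjoint tensor factors. The unitarity conditions (iv) are the easiest: $\langle \alpha_{\mathbb{G},W}(\xi), \alpha_{\mathbb{G},W}(\eta)\rangle_{C(\mathbb{G})} = \langle \alpha_{\mathbb{G},V}(v\xi), \alpha_{\mathbb{G},V}(v\eta)\rangle_{C(\mathbb{G})} = \langle v\xi, v\eta\rangle 1_{\mathbb{G}} = \langle \xi, \eta\rangle 1_{\mathbb{G}}$, using only that $v$ is an isometry, and similarly on the $\mathbb{H}$ side.

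Finally I verify that $v \colon W \to V$ is a $\mathbb{G}$-$\mathbb{H}$ intertwiner, i.e. $\left(1_{\mathbb{G}} \otimes v\right)\circ \alpha_{\mathbb{G},W} = \alpha_{\mathbb{G},V}\circ v$ and $\left(v \otimes 1_{\mathbb{H}}\right)\circ \alpha_{W,\mathbb{H}} = \alpha_{V,\mathbb{H}}\circ v$; expanding the left-hand side gives $\left(1_{\mathbb{G}} \otimes v v^*\right)\circ \alpha_{\mathbb{G},V}\circ v = \left(1_{\mathbb{G}} \otimes p\right)\circ \alpha_{\mathbb{G},V}\circ v = \alpha_{\mathbb{G},V}\circ p \circ v = \alpha_{\mathbb{G},V}\circ v$, again just $p$-intertwining plus $p v = v$. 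Then $v v^* = p$ and $v^* v = 1_W$ exhibit $W$ as the required sub-bimodule, completing the proof. The only genuine obstacle is bookkeeping in the co-action identities of \Cref{bimoduledefn}(i)--(iii) — making sure each inserted $v v^*$ is moved past the correct co-action leg using the intertwiner property of $p$ before being cancelled — but this is routine diagram-chasing rather than a conceptual difficulty; everything hinges on the single observation that $p$ being an intertwiner is exactly what lets the compression be a co-action.
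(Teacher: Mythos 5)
Your proposal is correct and takes essentially the same route as the paper: realize the image of $p$ as a Hilbert subspace $W$ with isometry $v$ satisfying $vv^*=p$, define the co-actions on $W$ by compression with $v$, and use the intertwiner property of $p$ to insert and cancel $vv^*$ in verifying the bimodule axioms and the intertwiner property of $v$. The only small caveat is that your first equality in checking condition (iv) is not ``using only that $v$ is an isometry'': since $1_{\mathbb{G}}\otimes v^*$ is merely a coisometry, that step needs $\left(1_{\mathbb{G}}\otimes p\right)\circ\alpha_{\mathbb{G},V}\circ v=\alpha_{\mathbb{G},V}\circ v$, i.e.\ the intertwiner property of $p$ together with $pv=v$, exactly as in the paper's computation.
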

\begin{proof}
	Suppose $\mathbb{G}, \mathbb{H} \in \mcal G_0$ and $V \in \mcal G_1 (\mathbb{G}, \mathbb{H})$. Let $p$ be a projection in $\mcal G_2 (V,V)$. Using idempotent completeness in the category of Hilbert spaces, we get a Hilbert space $W$ and an isometry $i : W \to V$ such that $i i^* = p$. Now, $p$ being a $\mathbb{G}$-$\mathbb{H}$ intertwiner, gives us the following equations :
	\begin{equation}\label{equantionp}
		\left(1 \otimes p \right) \circ \alpha_{\mathbb{G}, V } = \alpha_{\mathbb{G}, V } \circ p \ \ \ \ \t{and} \ \ \ \ \left(p \otimes 1 \right) \circ \alpha_{V, \mathbb{H}} = \alpha_{V, \mathbb{H}} \circ p
	\end{equation}
We need to show that $ W $ is a unitary $\mathbb{G}$-$\mathbb{H}$ bimodule and $i : W \to V$ is a $\mathbb{G}$-$\mathbb{H}$ intertwiner. Define the maps $\alpha_{\mathbb{G}, W }$ and $\alpha_{W , \mathbb{H}}$ as follows :
\[\alpha_{\mathbb{G}, W } \coloneqq \left(1 \otimes i^* \right) \circ \alpha_{\mathbb{G}, V } \circ i : W \to C(\mathbb{G}) \otimes W \]
\[\alpha_{  W, \mathbb{H}} \coloneqq \left(i^* \otimes 1 \right) \circ \alpha_{V , \mathbb{H}} \circ i : W \to W \otimes C(\mathbb{H}) \]

We first show that $\mathbb{G}$ has a left co-action on $ W $ given by $\alpha_{\mathbb{G}, W }$. We have,
	 \begin{align*}
	\left(\Delta \otimes 1 \right) \circ \alpha_{\mathbb{G}, W } &= \left(\Delta \otimes 1 \right) \circ \left(1 \otimes i^* \right) \circ \alpha_{\mathbb{G}, V } \circ i \\
	&= \left(\Delta \otimes i^* \right) \circ \alpha_{\mathbb{G}, V } \circ i \\
	&= \left(1_{\mathbb{G}} \otimes 1_{\mathbb{G}} \otimes i^* \right) \circ \left(\Delta \otimes 1 \right) \circ \alpha_{\mathbb{G}, V } \circ i \\
	&= \left(1_{\mathbb{G}} \otimes 1_{\mathbb{G}} \otimes i^* \right) \circ \left(1 \otimes \alpha_{\mathbb{G}, V }\right) \circ \alpha_{\mathbb{G}, V } \circ i \ \  \t{(Since V is a left} \ \mathbb{G}-\t{module})	\end{align*}
\begin{align*}
	\left(1_{\mathbb{G}} \otimes \alpha_{\mathbb{G}, W }\right) \circ \alpha_{\mathbb{G}, W } &= \left(1_{\mathbb{G}} \otimes \left(1 \otimes i^* \right) \circ \alpha_{\mathbb{G}, V } \circ i \right) \circ \left(1 \otimes i^* \right) \circ \alpha_{\mathbb{G}, V } \circ i \\
	&= \left(1_{\mathbb{G}} \otimes 1_{\mathbb{G}} \otimes i^* \right) \circ \left(1 \otimes \alpha_{\mathbb{G}, V }\right) \circ (1 \otimes p) \circ \alpha_{\mathbb{G}, V } \circ i \ \ \ \ \t{(Since} \ \ i i^* =p) \\
	&= \left(1_{\mathbb{G}} \otimes 1_{\mathbb{G}} \otimes i^* \right) \circ \left(1 \otimes \alpha_{\mathbb{G}, V }\right) \circ \alpha_{\mathbb{G}, V } \circ p \circ i  \ \ \ \ \t{(By \Cref{equantionp})} \\
	&= \left(1_{\mathbb{G}} \otimes 1_{\mathbb{G}} \otimes i^* \right) \circ \left(1 \otimes \alpha_{\mathbb{G}, V }\right) \circ \alpha_{\mathbb{G}, V } \circ p \circ i \\
	&= \left(1_{\mathbb{G}} \otimes 1_{\mathbb{G}} \otimes i^* \right) \circ \left(1 \otimes \alpha_{\mathbb{G}, V }\right) \circ \alpha_{\mathbb{G}, V } \circ i
\end{align*}
Therefore,  $\left(\Delta \otimes 1 \right) \circ \alpha_{\mathbb{G}, W } = \left(1_{\mathbb{G}} \otimes \alpha_{\mathbb{G}, W }\right) \circ \alpha_{\mathbb{G}, W } $. In a similar way, it can be shown that $\mathbb{H}$ has a right co-action on $W$ given by $\alpha_{  W, \mathbb{H}}$.

We now establish the compatibility condition of left and right co-actions on $W$.
\begin{align*}
	\left(1 \otimes \alpha_{  W, \mathbb{H}}\right) \circ \alpha_{\mathbb{G}, W } &= \left(1 \otimes \left(i^* \otimes 1 \right) \circ \alpha_{V , \mathbb{H}} \circ i \right) \circ (\left(1 \otimes i^* \right) \circ \alpha_{\mathbb{G}, V } \circ i) \\
	&= (1\otimes i^*) \circ (1\otimes \alpha_{V , \mathbb{H}}) \circ (1 \otimes i i^*) \circ \alpha_{\mathbb{G}, V } \circ i \\
	&= (1\otimes i^*) \circ (1\otimes \alpha_{V , \mathbb{H}}) \circ (1 \otimes p) \circ \alpha_{\mathbb{G}, V } \circ i \ \ \t{(Since} \ \ i i^* = p) \\
	&= (1\otimes i^*) \circ (1\otimes \alpha_{V , \mathbb{H}}) \circ \alpha_{\mathbb{G}, V } \circ p \circ i \ \ \ \ (\t{By \Cref{equantionp}}) \\
	&= (1\otimes i^*) \circ (1\otimes \alpha_{V , \mathbb{H}}) \circ \alpha_{\mathbb{G}, V } \circ i \\ 
	&= (1 \otimes i^*) \circ (\alpha_{\mathbb{G}, V } \otimes 1) \circ \alpha_{V , \mathbb{H}} \circ i \ \ \ \ (\t{By condition (iii) of \Cref{bimoduledefn}})
\end{align*}

\begin{align*}
	\left(\alpha_{\mathbb{G}, W } \otimes 1\right) \circ \alpha_{  W, \mathbb{H}} &= \left(\left(1 \otimes i^* \right) \circ \alpha_{\mathbb{G}, V } \circ i \otimes 1 \right) \circ (\left(i^* \otimes 1 \right) \circ \alpha_{V , \mathbb{H}} \circ i) \\
	&= (1 \otimes i^*) \circ (\alpha_{\mathbb{G}, V } \otimes 1) \circ (ii^* \otimes 1) \circ \alpha_{V , \mathbb{H}} \circ i \\
	&= (1 \otimes i^*) \circ (\alpha_{\mathbb{G}, V } \otimes 1) \circ (p \otimes 1) \circ \alpha_{V , \mathbb{H}} \circ i \\
	&= (1 \otimes i^*) \circ (\alpha_{\mathbb{G}, V } \otimes 1) \circ \alpha_{V , \mathbb{H}} \circ p \circ i \ \ \ \ \t{(By \Cref{equantionp})} \\
	&= (1 \otimes i^*) \circ (\alpha_{\mathbb{G}, V } \otimes 1) \circ \alpha_{V , \mathbb{H}} \circ i
\end{align*}
Therefore, $\left(1 \otimes \alpha_{  W, \mathbb{H}}\right) \circ \alpha_{\mathbb{G}, W } = \left(\alpha_{\mathbb{G}, W } \otimes 1\right) \circ \alpha_{  W, \mathbb{H}} $

Finally, we are left with verifying condition (iv) of \Cref{bimoduledefn} for $\alpha_{\mathbb{G}, W }$ and $\alpha_{  W, \mathbb{H}}$. For $w_1 ,  w_2 \in W$, we have, 
\begin{align*}
	\lab \alpha_{\mathbb{G}, W }(w_1) , \alpha_{\mathbb{G}, W }(w_1) \rab_{C(\mathbb{G})} &= \lab \left(1 \otimes i^* \right) \circ \alpha_{\mathbb{G}, V } \circ i (w_1) , \left(1 \otimes i^* \right) \circ \alpha_{\mathbb{G}, V } \circ i (w_2) \rab_{C(\mathbb{G})} \\
	&= \lab \alpha_{\mathbb{G}, V } \circ i (w_1) , \left(1 \otimes p \right) \circ \alpha_{\mathbb{G}, V } \circ i (w_2) \rab_{C(\mathbb{G})}  \comments{\ \\ \ \t{(Since} \ \ i i^* = p) }\\
	&= \lab \alpha_{\mathbb{G}, V } \circ i (w_1) , \alpha_{\mathbb{G}, V } \circ p \circ i (w_2) \rab_{C(\mathbb{G})} \\
	&= \lab \alpha_{\mathbb{G}, V } \circ i (w_1) , \alpha_{\mathbb{G}, V } \circ i (w_2) \rab_{C(\mathbb{G})} \\
	&= \lab i (w_1) , i (w_2) \rab \ \ \ \ \t{(By condition (iv) of \Cref{bimoduledefn})} \\
	&= \lab w_1, w_2 \rab .
\end{align*}
Similarly, we get $\lab \alpha_{\mathbb{G}, W }(w_1) , \alpha_{\mathbb{G}, W }(w_1) \rab_{C(\mathbb{G})} = \lab w_1 , w_2 \rab $. This concludes the theorem .   
\end{proof}

\begin{rem}\label{qsysrem}
	\Cref{idemthm}, allows one to consider Q-system completion of $\mcal G$. By \cite{CPJP}, a locally idempotent complete C*-2-category $\mcal C$ is Q-system complete if and only if every Q-system in $\mcal C$ splits in $\mcal C$. We would like to remark that $\mcal G$ is far from being Q-system complete. Consider any compact quantum group $\mathbb{G}$ and consider $\C^n$, where $n$ is not a perfect square, equipped with trivial left $\mathbb{G}$ co-action and a trivial right $\mathbb{G}$ co-action (see \Cref{example}). Now, it is trivial to verify that $\C^n$ becomes a Q-system in $\mcal G$ but it cannot split in $\mcal G$. Therefore, in the next section, we try to describe $\textbf{QSys}(\mcal G)$, and that led us to consider the category of \textit{quantum bi-elements}.
\end{rem}

\section{Quantum bi-elements}\label{qbl}

The foundational idea of noncommutative topology is to generalise the correspondence between topological spaces and commutative algebras by considering noncommutative algebras in light of Gelfand duality. In \cite{MRV}, the category of finite sets and functions has been quantised to form quantum sets and quantum functions. This `quantisation' approach categorifies finite sets and functions into a $ 2 $-category. This further reveals their higher compositional structure. Quantum elements \cite{MRV} were formulated as objects of the corresponding quantum set. Our formulation of \textit{quantum bi-elements} can be regarded as a two-sided version of quantum elements. Towards the end of this section, we construct non-unital quantum functions from quantum bi-elements and provide a description of Q-system completion of $\mcal G$ as full subcategory of the category of quantum bi-elements.   

\begin{defn}\label{qbelmntdefn}
	Suppose $A$ and $B$ are Q-systems in $\textbf{Hilb}$. A \textit{quantum bi-element} of the pair $(A , B)$ is given by a triplet $(H,Q_1,Q_2)$, where $H$ is a finite dimensional Hilbert space, and $Q_1 : H \to A \otimes H$ and $Q_2 : H \to H \otimes B$ are linear maps, represented as \[Q_1 \ = \ \raisebox{-2mm}{\begin{tikzpicture}[rotate=180]
			\draw (0,0) to (0,.6);
			\draw[red][in=60,out=90,looseness=1] (.5,0) to (0,.3);
			\node[scale=.8] at (0,.3) {$\color{red}{\bullet}$};
	\end{tikzpicture}} \ \ , \ \
	Q_2 \ = \ \raisebox{-2mm}{\begin{tikzpicture}[rotate=180]
			\draw (0,0) to (0,.6);
			\draw[blue][in=120,out=90,looseness=1] (-.5,0) to (0,.3);
			\node[scale=.8] at (0,.3) {$\color{blue}{\bullet}$};
	\end{tikzpicture}} \ \ , \ \ Q_1^* \ =  \ \raisebox{-2mm}{\begin{tikzpicture}
\draw (0,0) to (0,.6);
\draw[red][in=120,out=90,looseness=1] (-.5,0) to (0,.3);
\node[scale=.8] at (0,.3) {$\color{red}{\bullet}$};
\end{tikzpicture}} \ \ , \ \ Q_2^* = \raisebox{-2mm}{\begin{tikzpicture}
\draw (0,0) to (0,.6);
\draw[blue][in=60,out=90,looseness=1] (.5,0) to (0,.3);
\node[scale=.8] at (0,.3) {$\color{blue}{\bullet}$};
\end{tikzpicture}}\]
where $H,A$ and $ B $ are represented by black, red and blue strands respectively. The triplet $(H,Q_1,Q_2)$ satisfy the following :
\begin{itemize}
	\item [(1)] \reflectbox{\rotatebox[origin=c]{180}{\raisebox{-4mm}{\begin{tikzpicture}
			\draw (0,0) to (0,1.2);
			\draw[red][in=120,out=90,looseness=1] (-.5,0) to (0,.5);
			\draw[red][in=120,out=90,looseness=1] (-.7,0) to (0,.7);
			\node[scale=.8] at (0,.52) {$\color{red}{\bullet}$};
			\node[scale=.8] at (0,.72) {$\color{red}{\bullet}$};
			\node at (.6,.6) {$=$};
			\draw[red,in=90,out=90,looseness=2] (1,0) to (1.5,0);
			\node[scale=.7] at (1.25,.27) {$\red{\bullet}$};
			\draw[red,in=120,out=90,looseness=1] (1.25,.27) to (1.75,.57);
			\draw (1.75,0) to (1.75,1.2);
			\node[scale=.8] at (1.75,.59) {$\color{red}{\bullet}$};
	\end{tikzpicture}}}} \hspace*{2mm}, \hspace*{2mm}
	\reflectbox{\rotatebox[origin=c]{180}{\raisebox{-4mm}{\begin{tikzpicture}
			\draw (0,0) to (0,1.2);
			\draw[blue][in=60,out=90,looseness=1] (.5,0) to (0,.5);
			\draw[blue][in=60,out=90,looseness=1] (.7,0) to (0,.7);
			\node[scale=.8] at (0,.52) {$\color{blue}{\bullet}$};
			\node[scale=.8] at (0,.72) {$\color{blue}{\bullet}$};
			\node at (1,.6) {$=$};
			\draw (1.3,0) to (1.3,1.2);
			\draw[blue,in=90,out=90,looseness=2] (1.55,0) to (2.05,0);
			\draw[blue,in=60,out=90,looseness=1] (1.8,.27) to (1.3,.57);
			\node[scale=.7] at (1.8,.27) {$\color{blue}{\bullet}$};
			\node[scale=.8] at (1.3,.57) {$\color{blue}{\bullet}$};
	\end{tikzpicture}}}} \hspace*{2mm} and \hspace*{2mm}
	\reflectbox{\rotatebox[origin=c]{180}{\raisebox{-4mm}{\begin{tikzpicture}
			\draw (0,0) to (0,1);
			\draw[red][in=120,out=90,looseness=1] (-.5,0) to (0,.6);
			\draw[blue][in=60,out=90,looseness=1] (.5,0) to (0,.3);
			\node[scale=.8] at (0,.6) {$\color{red}{\bullet}$};
			\node[scale=.8] at (0,.3) {$\color{blue}{\bullet}$};
			\node at (.7,.6) {$=$};
			\draw (1.5,0) to (1.5,1);
			\draw[red][in=120,out=90,looseness=1] (1,0) to (1.5,.3);
			\draw[blue][in=60,out=90,looseness=1] (2,0) to (1.5,.6);
			\node[scale=.8] at (1.5,.6) {$\color{blue}{\bullet}$};
			\node[scale=.8] at (1.5,.3) {$\color{red}{\bullet}$};
	\end{tikzpicture}}}}
\item [(2)] \reflectbox{\rotatebox[origin=c]{180} {\raisebox{-4mm}{\begin{tikzpicture}
		\draw (0,0) to (0,1);
		\draw[red][in=120,out=90,looseness=1] (-.5,.2) to (0,.6);
		\node[scale=.7] at (-.5,.2) {$\red{\bullet}$};
		\node[scale=.8] at (0,.6) {$\color{red}{\bullet}$};
		\node at (.4,.6) {$=$};
		\draw (.8,0) to (.8,1);
		\node at (1.2,.6) {$=$};
		\draw (1.6,0) to (1.6,1);
		\draw[blue][in=60,out=90,looseness=1] (2.1,.2) to (1.6,.6);
		\node[scale=.7] at (2.1,.2) {$\color{blue}{\bullet}$};
		\node[scale=.8] at (1.6,.6) {$\color{blue}{\bullet}$};
\end{tikzpicture}}}}

\item [(3)] $\raisebox{-4mm}{\begin{tikzpicture}
		\draw (0,0) to (0,.8);
		\draw[red][in=120,out=90,looseness=1] (-.5,0) to (0,.3);
		\node[scale=.8] at (0,.3) {$\color{red}{\bullet}$};
\end{tikzpicture}} = \raisebox{-6mm}{\begin{tikzpicture}
\draw (0,0) to (0,1.2);
\draw[red][in=-120,out=-90,looseness=1] (-.3,.5) to (0,.3);
\draw[red,in=90,out=90,looseness=2] (-.7,.5) to (-.3,.5);
\draw[red] (-.7,.5) to (-.7,0);
\draw[red] (-.5,.7) to (-.5,1);
\node[scale=.7] at (-.5,.72) {$\red{\bullet}$};
\node[scale=.8] at (0,.3) {$\color{red}{\bullet}$};
\node[scale=.8] at (-.5,1.05) {$\red{\bullet}$};
\end{tikzpicture}}$ \hspace*{2mm} and $\hspace*{2mm}
\raisebox{-2mm}{\begin{tikzpicture}
		\draw (0,0) to (0,.8);
		\draw[blue][in=60,out=90,looseness=1] (.5,0) to (0,.3);
		\node[scale=.8] at (0,.3) {$\color{blue}{\bullet}$};
\end{tikzpicture}} = \raisebox{-6mm}{\begin{tikzpicture}
\draw (0,0) to (0,1.2);
\draw[blue][in=-60,out=-90,looseness=1] (.3,.5) to (0,.3);
\draw[blue][in=90,out=90,looseness=2] (.3,.5) to (.7,.5);
\draw[blue] (.5,.7) to (.5,1); 
\draw[blue] (.7,.5) to (.7,0);
\node[scale=.8] at (0,.3) {$\color{blue}{\bullet}$};
\node[scale=.7] at (.5,.7) {$\color{blue}{\bullet}$};
\node[scale=.8] at (.5,1.05) {$\color{blue}{\bullet}$};
\end{tikzpicture}}$ 
\end{itemize}
\end{defn}

\begin{ex}
	Suppose $(A,m,i)$ be a Q-system in \textbf{Hilb}. Clearly, it follows that, $(A,m^*,m^*)$ is a quantum bi-element of the pair $(A,A)$.
\end{ex}

\begin{ex}\label{qbleg}
	A quantum bi-element of $(A,\C)$ is a quantum element (in the sense of \cite{MRV}) of $A$.
\end{ex}

It turns out that the conditions for being a quantum bi-element $(H,Q_1,Q_2)$ of the pair $(A,B)$, enforces the maps $Q_1$ and $Q_2$ to be isometries. Before we prove this result, we first prove the following lemma which might be known to experts, nevertheless we give a proof.    

\begin{lem}\label{frobeniuslem}
	Suppose $A$ and $B$ are Q-systems in $\normalfont\textbf{Hilb}$ and let $(H,Q_1,Q_2)$ be a quantum bi-element of $(A,B)$. Then we have the following :
	\begin{itemize}
		\item [(i)] $\raisebox{-6mm}{\begin{tikzpicture}
				\draw (0,0) to (0,1.2);
				\draw[red][in=-120,out=-90,looseness=1] (-.3,.5) to (0,.3);
				\draw[red,in=90,out=90,looseness=2] (-.7,.5) to (-.3,.5);
				\draw[red] (-.7,.5) to (-.7,0);
				\draw[red] (-.5,.7) to (-.5,1);
				\node[scale=.7] at (-.5,.72) {$\red{\bullet}$};
				\node[scale=.8] at (0,.3) {$\color{red}{\bullet}$};
				\node at (.4,.6) {$=$};
				\draw (1.2,0) to (1.2,1.2);
				\draw[red][in=-120,out=-90,looseness=1] (.8,1.2) to (1.2,.8);
				\draw[red][in=120,out=90,looseness=1] (.8,0) to (1.2,.4);
				\node[scale=.8] at (1.2,.8) {$\color{red}{\bullet}$};
				\node[scale=.8] at (1.2,.4) {$\color{red}{\bullet}$};
				\node at (1.6,.6) {$=$};
				\draw (2.8,0) to (2.8,1.2);
				\draw[red][in=120,out=90,looseness=1] (2.4,.4) to (2.8,.8);
				\draw[red,in=-90,out=-90,looseness=2] (2.4,.4) to (2,.4);
				\draw[red] (2,.4) to (2,1);
				\draw[red] (2.2,.2) to (2.2,-.1);
				\node[scale=.7] at (2.2,.15) {$\red{\bullet}$};
				\node[scale=.8] at (2.8,.8) {$\color{red}{\bullet}$};
		\end{tikzpicture}}$
	\item [(ii)] $\raisebox{-6mm}{\begin{tikzpicture}
			\draw (0,0) to (0,1.2);
			\draw[blue][in=-60,out=-90,looseness=1] (.3,.5) to (0,.3);
			\draw[blue][in=90,out=90,looseness=2] (.3,.5) to (.7,.5);
			\draw[blue] (.5,.7) to (.5,1); 
			\draw[blue] (.7,.5) to (.7,0);
			\node[scale=.8] at (0,.3) {$\color{blue}{\bullet}$};
			\node[scale=.7] at (.5,.7) {$\color{blue}{\bullet}$};
			\node at (1.1,.6) {$=$};
			\draw (1.5,0) to (1.5,1.2);
			\draw[blue][in=-60,out=-90,looseness=1] (1.9,1.2) to (1.5,.8);
			\draw[blue][in=60,out=90,looseness=1] (1.9,0) to (1.5,.4);
			\node[scale=.8] at (1.5,.8) {$\color{blue}{\bullet}$};
			\node[scale=.8] at (1.5,.4) {$\color{blue}{\bullet}$};
			\node at (2.3,.6) {$=$};
			\draw (2.7,0) to (2.7,1.2);
			\draw[blue][in=60,out=90,looseness=1] (3,.4) to (2.7,.7);
			\draw[blue,in=-90,out=-90,looseness=2] (3,.4) to (3.4,.4);
			\draw[blue] (3.4,.4) to (3.4,1);
			\draw[blue] (3.2,.2) to (3.2,0);
			\node[scale=.7] at (3.2,.15) {$\color{blue}{\bullet}$};
			\node[scale=.8] at (2.7,.7) {$\color{blue}{\bullet}$}; 
	\end{tikzpicture}}$
	\end{itemize}
\end{lem}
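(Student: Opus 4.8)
\emph{Proof plan.} The two assertions are mirror images of each other under the left--right symmetry of \Cref{qbelmntdefn}: interchanging $A\leftrightarrow B$, $Q_1\leftrightarrow Q_2$ and reflecting every string diagram carries the hypotheses (axioms (1), (2), (3) together with the Q-system axioms of $A$) to the corresponding statements for $B$. So it suffices to prove (i); part (ii) then follows word for word using the second relation in axiom (1), the second equation of axiom (3), and the Frobenius and unitality axioms of $B$. Write $P_1,P_2,P_3\colon A\otimes H\to A\otimes H$ for the three maps displayed in (i), read from left to right, so that
\[
P_1=(m_A\otimes\mathrm{id}_H)\circ(\mathrm{id}_A\otimes Q_1),\qquad
P_2=Q_1\circ Q_1^\ast,\qquad
P_3=(\mathrm{id}_A\otimes Q_1^\ast)\circ(m_A^\ast\otimes\mathrm{id}_H).
\]
Then $P_3=P_1^\ast$, while $P_2$ is visibly self-adjoint (it is a $Q_1$ stacked on a $Q_1^\ast$, so $P_2^\ast=(Q_1\circ Q_1^\ast)^\ast=Q_1\circ Q_1^\ast=P_2$). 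Hence it is enough to prove the single identity $P_1=P_2$; applying $\ast$ then gives $P_3=P_1^\ast=P_2^\ast=P_2$, so all three coincide.

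To prove $P_1=P_2$ I would start from $P_2=Q_1\circ Q_1^\ast$ and rewrite the lower $Q_1^\ast$ using axiom (3) of \Cref{qbelmntdefn}, which, recalling $ev_A=i_A^\ast\circ m_A$ from \Cref{Qdual}, says exactly that $Q_1^\ast=(i_A^\ast\otimes\mathrm{id}_H)\circ(m_A\otimes\mathrm{id}_H)\circ(\mathrm{id}_A\otimes Q_1)=(i_A^\ast\otimes\mathrm{id}_H)\circ P_1$. Thus $P_2=Q_1\circ(i_A^\ast\otimes\mathrm{id}_H)\circ P_1$. Since the counit $i_A^\ast$ acts on a tensor factor disjoint from the one touched by $Q_1$, it slides past $Q_1$, giving $P_2=(i_A^\ast\otimes\mathrm{id}_A\otimes\mathrm{id}_H)\circ(\mathrm{id}_A\otimes Q_1)\circ P_1$. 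Next I would simplify $(\mathrm{id}_A\otimes Q_1)\circ P_1$: pushing the outer $Q_1$ down through the multiplication $m_A$ inside $P_1$ makes the two copies of $Q_1$ adjacent, and the coassociativity relation for $Q_1$ in axiom (1), namely $(\mathrm{id}_A\otimes Q_1)\circ Q_1=(m_A^\ast\otimes\mathrm{id}_H)\circ Q_1$, collapses them; what is then left acting on the two $A$-strands is $(m_A\otimes\mathrm{id}_A)\circ(\mathrm{id}_A\otimes m_A^\ast)$, which the Frobenius relation (Q3) for $A$ rewrites as $m_A^\ast\circ m_A$. Therefore $(\mathrm{id}_A\otimes Q_1)\circ P_1=\bigl((m_A^\ast\circ m_A)\otimes\mathrm{id}_H\bigr)\circ(\mathrm{id}_A\otimes Q_1)$. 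Substituting back and using the counit law $(i_A^\ast\otimes\mathrm{id}_A)\circ m_A^\ast=\mathrm{id}_A$ (the adjoint of the unitality axiom (Q2) for $A$) to cancel $i_A^\ast$ against $m_A^\ast$, the bracket collapses to $m_A$ and we are left with $P_2=(m_A\otimes\mathrm{id}_H)\circ(\mathrm{id}_A\otimes Q_1)=P_1$, as required.

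I expect the only delicate point to be the diagrammatic bookkeeping in the second paragraph: at each stage three $A$-legs are present, and one has to keep straight which is acted on by $m_A$, which by $m_A^\ast$, and which by $Q_1$, and to invoke the Frobenius move (Q3) in the correct orientation. As every step is either an isotopy or a single defining relation, I would run the argument graphically, using the string-diagram conventions of \Cref{graphcalc} together with the Hilbert-space calculus recalled in the preliminaries, where all these moves are transparent. The genuine inputs are precisely axioms (1) and (3) of \Cref{qbelmntdefn} and the Frobenius and unitality axioms of the Q-system $A$; notably the separability axiom (Q4) of $A$ is not used.
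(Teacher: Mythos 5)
Your argument is correct and follows essentially the same route as the paper: rewrite $Q_1\circ Q_1^{*}$ using axiom (3), apply the coassociativity equation from axiom (1), and finish with the Frobenius and (co)unitality axioms of the Q-system $A$, with part (ii) handled by the mirror symmetry. The only cosmetic difference is that you get the equality with the third diagram by observing it is the adjoint of the first while the middle one is self-adjoint, whereas the paper just remarks that the remaining equality (and (ii)) is proved similarly.
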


\begin{proof}\
	
	\begin{itemize}
		\item [(i)] Using condition (3) of \Cref{qbelmntdefn}, we have $$ \raisebox{-4mm}{\begin{tikzpicture}
			\draw (1.2,0) to (1.2,1.2);
			\draw[red][in=-120,out=-90,looseness=1] (.8,1.2) to (1.2,.8);
			\draw[red][in=120,out=90,looseness=1] (.8,0) to (1.2,.4);
			\node[scale=.8] at (1.2,.8) {$\color{red}{\bullet}$};
			\node[scale=.8] at (1.2,.4) {$\color{red}{\bullet}$};
		\end{tikzpicture}} = \raisebox{-4mm}{\begin{tikzpicture}
	\draw (0,0) to (0,1.2);
	\draw[red,in=-120,out=-90,looseness=1] (-.4,1.2) to (0,.8);
	\draw[red,in=-120,out=-90,looseness=1] (-.5,.7) to (0,.3);
	\draw[red,in=90,out=90,looseness=2] (-.5,.7) to (-.9,.7);
	\draw[red] (-.7,.9) to (-.7,1.3);
	\draw[red] (-.9,.7) to (-.9,0);
	\node[scale=.8] at (-.7,.9) {$\red{\bullet}$};
	\node[scale=.8] at (-.7,1.3) {$\red{\bullet}$};
	\node[scale=.8] at (0,.8) {$\red{\bullet}$};
	\node[scale=.8] at (0,.3) {$\red{\bullet}$};
\end{tikzpicture}} = \raisebox{-4mm}{\begin{tikzpicture}
\draw (0,0) to (0,1.5);
\draw[red,in=-120,out=-90,looseness=1] (-.5,.8) to (0,.5);
\draw[red,in=-90,out=-90,looseness=2] (-.7,1) to (-.3,1);
\draw[red,in=90,out=90,looseness=2] (-.7,1) to (-1.1,1);
\draw[red] (-.9,1.2) to (-.9,1.5);
\draw[red] (-1.1,1) to (-1.1,0);
\draw[red] (-.3,1) to (-.3,1.5);
\node[scale=.7] at (-.5,.75) {$\red{\bullet}$};
\node[scale=.7] at (-.9,1.2) {$\red{\bullet}$};
\node[scale=.7] at (-.9,1.5) {$\red{\bullet}$};
\node[scale=.7] at (0,.5) {$\red{\bullet}$};
\end{tikzpicture}} = \raisebox{-4mm}{\begin{tikzpicture}
\draw (0,0) to (0,1.2);
\draw[red][in=-120,out=-90,looseness=1] (-.3,.5) to (0,.3);
\draw[red,in=90,out=90,looseness=2] (-.7,.5) to (-.3,.5);
\draw[red] (-.7,.5) to (-.7,0);
\draw[red] (-.5,.7) to (-.5,1.2);
\node[scale=.7] at (-.5,.72) {$\red{\bullet}$};
\node[scale=.8] at (0,.3) {$\color{red}{\bullet}$};
\end{tikzpicture}} $$
The second equality follows from the first equation of condition (1) of \Cref{qbelmntdefn}. Unitality and Frobenius condition of the Q-system $A$ reveals the last equality. The other equation of (i) can be proved similarly.
 
 \item [(ii)] The proof is similar to that of (i).
	\end{itemize}
\end{proof}

\begin{prop}\label{isoprop}
	Suppose $A$ and $B$ are Q-systems in $\normalfont\textbf{Hilb}$ and let $(H,Q_1,Q_2)$ be a quantum bi-element of $(A,B)$. Then $Q_1$ and $Q_2$ are isometries.
\end{prop}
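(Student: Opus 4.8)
The plan is to establish the two isometry identities $Q_1^{*}Q_1=1_H$ and $Q_2^{*}Q_2=1_H$. The two arguments are mirror images, so I would spell out the one for $Q_1$ and remark that the one for $Q_2$ is obtained by interchanging the roles of $A$ and $B$ and of the red/blue halves of each axiom in \Cref{qbelmntdefn}.

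First I would translate the relevant string-diagram axioms of \Cref{qbelmntdefn} into algebraic identities in $\textbf{Hilb}$. The first equation of condition $(3)$ reads
\[
Q_1^{*}=(i_A^{*}\otimes 1_H)\circ(m_A\otimes 1_H)\circ(1_A\otimes Q_1)\colon A\otimes H\longrightarrow H .
\]
The first equation of condition $(1)$ — which is the vertical reflection of the bimodule associativity axiom $(B1)$ — expresses that $Q_1$ is a coaction for the comultiplication $m_A^{*}$ on $A$:
\[
(1_A\otimes Q_1)\circ Q_1=(m_A^{*}\otimes 1_H)\circ Q_1 .
\]
Finally, the first equation of condition $(2)$ (or its adjoint) is the counitality identity $(i_A^{*}\otimes 1_H)\circ Q_1=1_H$.

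Then I would simply compute, applying in turn the identity from $(3)$, the coassociativity identity from $(1)$, separability of the Q-system $A$ (axiom $(Q4)$, i.e.\ $m_A\circ m_A^{*}=1_A$), and the counitality identity from $(2)$:
\begin{align*}
Q_1^{*}\circ Q_1
&=(i_A^{*}\otimes 1_H)\circ(m_A\otimes 1_H)\circ(1_A\otimes Q_1)\circ Q_1\\
&=(i_A^{*}\otimes 1_H)\circ(m_A\otimes 1_H)\circ(m_A^{*}\otimes 1_H)\circ Q_1\\
&=(i_A^{*}\otimes 1_H)\circ\big((m_A\circ m_A^{*})\otimes 1_H\big)\circ Q_1\\
&=(i_A^{*}\otimes 1_H)\circ Q_1=1_H .
\end{align*}
Hence $Q_1$ is an isometry, and the same chain of equalities with the blue data (conditions $(1)$–$(3)$ for $B$ and separability of $B$) yields $Q_2^{*}\circ Q_2=1_H$.

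I do not expect a genuine obstacle: once the pictures are correctly read, the proof is a four-step manipulation. The only delicate point is the diagram-to-formula bookkeeping — keeping track of which tensor leg each structure map acts on; alternatively one can avoid coordinates and run the identical argument in string-diagram form, using \Cref{frobeniuslem} to package the Frobenius step. It is also worth noting that this specializes as it should: for the quantum bi-element $(A,m_A^{*},m_A^{*})$ of $(A,A)$ the statement collapses to the separability $m_A m_A^{*}=1_A$, and for a quantum bi-element of $(A,\C)$ it recovers the fact that a quantum element is built from an isometry.
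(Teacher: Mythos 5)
Your proof is correct: the translation of the axioms into formulas is accurate (condition (3) indeed reads $Q_1^{*}=(i_A^{*}\otimes 1_H)(m_A\otimes 1_H)(1_A\otimes Q_1)$, condition (1) gives the coassociativity $(1_A\otimes Q_1)Q_1=(m_A^{*}\otimes 1_H)Q_1$, and condition (2) gives $(i_A^{*}\otimes 1_H)Q_1=1_H$), and the four-step chain using separability $m_Am_A^{*}=1_A$ closes the argument; the $Q_2$ case is indeed the mirror image. Your route differs from the paper's in its organization: the paper first establishes the Frobenius-type relations of \Cref{frobeniuslem} and then computes $Q_1^{*}Q_1$ diagrammatically, invoking unitality, separability \emph{and} the Frobenius condition of the Q-system $A$ before finishing with condition (2), whereas you bypass \Cref{frobeniuslem} entirely and use only condition (3), coassociativity, separability, and counitality. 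Your argument is therefore leaner in its hypotheses on $A$ (no use of (Q2)/(Q3) beyond what is packaged in the bi-element axioms), while the paper's detour through \Cref{frobeniuslem} earns its keep elsewhere, since that lemma is reused in the construction of the non-unital quantum function and in the final theorem; either way the content of the proposition is the same, and your specializations (to $(A,m_A^{*},m_A^{*})$ and to quantum elements of $(A,\C)$) are consistent with the paper's examples.
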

\begin{proof}
	From \Cref{frobeniuslem} and condition (3) of \Cref{qbelmntdefn}, we obtain, $$ Q_1 ^* Q_1 = \raisebox{-12mm}{\begin{tikzpicture}
			\draw (1.2,-.5) to (1.2,1.7);
			\draw[red][in=-120,out=-90,looseness=1] (.8,1.2) to (1.2,.8);
			\draw[red][in=120,out=90,looseness=1] (.8,0) to (1.2,.4);
			\draw[red,in=90,out=90,looseness=2] (.8,1.2) to (.4,1.2);
			\draw[red,in=-90,out=-90,looseness=2] (.8,0) to (.4,0);
			\draw[red] (.4,0) to (.4,1.2);
			\draw[red] (.6,-.25) to (.6,-.5);
			\draw[red] (.6,1.4) to (.6,1.7);
			\node[scale=.8] at (.6,-.25) {$\red{\bullet}$};
			\node[scale=.8] at (.6,1.4) {$\red{\bullet}$};
			\node[scale=.8] at (.6,-.5) {$\red{\bullet}$};
			\node[scale=.8] at (.6,1.7) {$\red{\bullet}$};
 			\node[scale=.8] at (1.2,.8) {$\color{red}{\bullet}$};
			\node[scale=.8] at (1.2,.4) {$\color{red}{\bullet}$};
	\end{tikzpicture}} = \raisebox{-8mm}{\begin{tikzpicture}
\draw (0,-.5) to (0,1.4);
\draw[red][in=-120,out=-90,looseness=1] (-.3,.5) to (0,.3);
\draw[red,in=90,out=90,looseness=2] (-.7,.5) to (-.3,.5);
\draw[red,in=90,out=90,looseness=2] (-.5,.7) to (-1,.7);
\draw[red,in=-90,out=-90,looseness=2] (-1,0) to (-.7,0);
\draw[red] (-.7,.5) to (-.7,0);
\draw[red] (-1,0) to (-1,.7);
\draw[red] (-.75,1) to (-.75,1.3);
\draw[red] (-.85,-.5) to (-.85,-.2);
\node[scale=.7] at (-.85,-.5) {$\red{\bullet}$};
\node[scale=.7] at (-.85,-.2) {$\red{\bullet}$};
\node[scale=.7] at (-.75,1.3) {$\red{\bullet}$};
\node[scale=.7] at (-.5,.72) {$\red{\bullet}$};
\node[scale=.8] at (0,.3) {$\color{red}{\bullet}$};
\node[scale=.7] at (-.75,1) {$\color{red}{\bullet}$}; 
\end{tikzpicture}} = \raisebox{-2mm}{\reflectbox{\rotatebox[origin=c]{180}{\begin{tikzpicture}
\draw (0,0) to (0,1);
\draw[red][in=120,out=90,looseness=1] (-.5,.2) to (0,.6);
\node[scale=.7] at (-.5,.2) {$\red{\bullet}$};
\node[scale=.8] at (0,.6) {$\color{red}{\bullet}$};
\end{tikzpicture}}}} = \raisebox{-4mm}{\begin{tikzpicture}
\draw (0,0) to (0,1.4);
\end{tikzpicture}}  $$
The third equality follows from the unitality, separability, Frobenius condition of the Q-system $A$. The last equality is just the first equation of condition (2) of \Cref{qbelmntdefn}. In a similar manner, it can be proved that $Q_2$ is also an isometry.
\end{proof}

Quantum functions underlie various notions of `quantum morphism' which is prevalent in quantum information theory and noncommutative topology. They can be understood as perfect quantum strategies for a certain `function game'. We provide a construction of non-unital quantum functions using quantum bi-elements.

\begin{defn}\cite{MRV} \label{qfndefn}
	A \textit{quantum function} between Q-systems $A$ and $B$ is a pair $(H,P)$, where $H$ is a finite dimensional Hilbert space and $P : H \otimes A \to B \otimes H$ satisfying the following :
	\begin{equation}
		\raisebox{-6mm}{\begin{tikzpicture}
			\draw (-.3,0) to (-.3,-.8);
			\draw[red] (.3,0) to (.3,-.8);
			\draw (.3,0) to (.3,.9) ;
			\draw[blue] (-.3,0) to (-.3,.5);
			\draw[blue,in=-90,out=-90,looseness=2] (-.6,.9) to (0,.9);
			\node[scale=.8] at (-.3,.55) {$\color{blue}{\bullet}$};
			\node[draw,thick, rounded corners, fill=white,minimum width=30] at (0,0) {$P$};
		\end{tikzpicture}} \ = \ \raisebox{-8mm}{\begin{tikzpicture}
		\draw (-.3,0) to (-.3,-.8);
		\draw[red] (.9,-.2) to (.9,.8);
		\draw (.9,.8) to (.9,1.5);
		\draw (.3,0) to (.3,.6) ;
		\draw[blue] (.3,.8) to (.3,1.5);
		\draw[blue] (-.3,0) to (-.3,1.5);
		\draw[red,in=-90,out=-90,looseness=2] (.3,-.2) to (.9,-.2);
		\draw[red] (.6,-.55) to (.6,-.8);
		\node[draw,thick, rounded corners, fill=white,minimum width=30] at (0,0) {$P$};
		\node[draw,thick, rounded corners, fill=white,minimum width=30] at (.6,.8) {$P$};
		\node[scale=.8] at (.6,-.55) {$\red{\bullet}$};
	\end{tikzpicture}} \ \  \ \ \ \ \ \ \ \ 
\raisebox{-6mm}{\begin{tikzpicture}
		\draw (-.3,0) to (-.3,-.8);
		\draw[red] (.3,0) to (.3,-.8);
		\draw (.3,0) to (.3,.9) ;
		\draw[blue] (-.3,0) to (-.3,.75);
		\node[scale=.8] at (-.3,.75) {$\color{blue}{\bullet}$};
		\node[draw,thick, rounded corners, fill=white,minimum width=30] at (0,0) {$P$};
\end{tikzpicture}} \ = \ \raisebox{-4mm}{\begin{tikzpicture}
\draw (0,0) to (0,1);
\draw[red] (.5,0) to (.5,.8);
\node[scale=.8] at (.5,.8) {$\red{\bullet}$};
\end{tikzpicture}} \ \ \ \ \ \ \ \ \ \
 \raisebox{-6mm}{\begin{tikzpicture}
 		\draw[blue] (-.3,0) to (-.3,-.8);
 		\draw (.3,0) to (.3,-.8);
 		\draw[red] (.3,0) to (.3,.8) ;
 		\draw (-.3,0) to (-.3,.8);
 		\node[draw,thick, rounded corners, fill=white,minimum width=30] at (0,0) {$P^*$};
 \end{tikzpicture}} \ = \ \raisebox{-6mm}{\begin{tikzpicture}
 \draw (-.3,0) to (-.3,-.8);
 \draw[red,in=-90,out=-90,looseness=2] (.3,-.2) to (.8,-.2);
 \draw (.3,0) to (.3,.9) ;
 \draw[blue,in=90,out=90,looseness=2] (-.3,.2) to (-.8,.2);
 \draw[blue] (-.55,.45) to (-.55,.8);
 \draw[blue] (-.8,.2) to (-.8,-.8);
 \draw[red] (.55,-.8) to (.55,-.5);
 \draw[red] (.8,-.2) to (.8,.9);
 \node[scale=.8] at (-.55,.45) {$\color{blue}{\bullet}$};
 \node[scale=.8] at (-.55,.8) {$\color{blue}{\bullet}$};
 \node[scale=.8] at (.55,-.5) {$\color{red}{\bullet}$};
 \node[scale=.8] at (.55,-.8) {$\color{red}{\bullet}$};
 \node[draw,thick, rounded corners, fill=white,minimum width=30] at (0,0) {$P$};
\end{tikzpicture}}
	\end{equation}
\end{defn}
We will call a quantum function to be `non-unital' if it does not satisfy the second condition of \Cref{qfndefn}. \comments{We say that $P$ is a quantum function from $A$ to $B$}

Quantum bi-elements give rise to non-unital quantum functions.
Suppose $A$ and $B$ are Q-systems in $\normalfont\textbf{Hilb}$ and let $(H,Q_1,Q_2)$ be a quantum bi-element of $(A,B)$. Using the quantum bi-element $(H,Q_1,Q_2)$ we can construct a non-unital quantum function from $B$ to $A$  as follows. Define 
\[ P \coloneqq \raisebox{-4mm}{\begin{tikzpicture}
	\draw (0,0) to (0,1.2);
	\draw[blue,in=120,out=90,looseness=1] (.5,0) to (0,.4);
	\draw[red,in=-120,out=-90,looseness=1] (-.5,1.2) to (0,.8);
	\node[scale=.8] at (0,.4) {$\color{blue}{\bullet}$};
	\node[scale=.8] at (0,.8) {$\red{\bullet}$};
\end{tikzpicture}} : H \otimes B \to A \otimes H \]
Our claim is that $P$ will be a non-unital quantum function from $B$ to $A$. Before we prove this, we first prove the following lemma :
\begin{lem}\label{associativelem}
	With the above notations, we have, \[ P = \raisebox{-4mm}{\begin{tikzpicture}
			\draw (0,0) to (0,1.2);
			\draw[blue,in=120,out=90,looseness=1] (.5,0) to (0,.4);
			\draw[red,in=-120,out=-90,looseness=1] (-.5,1.2) to (0,.8);
			\node[scale=.8] at (0,.4) {$\color{blue}{\bullet}$};
			\node[scale=.8] at (0,.8) {$\red{\bullet}$};
	\end{tikzpicture}} = \raisebox{-4mm}{\begin{tikzpicture}
	\draw (0,0) to (0,1.2);
	\draw[blue,in=120,out=90,looseness=1] (.5,.4) to (0,.8);
	\draw[red,in=-120,out=-90,looseness=1] (-.5,.8) to (0,.4);
	\draw[red] (-.5,.8) to (-.5,1.2);
	\draw[blue] (.5,.4) to (.5,0);
	\node[scale=.8] at (0,.8) {$\color{blue}{\bullet}$};
	\node[scale=.8] at (0,.4) {$\red{\bullet}$};
\end{tikzpicture}}  \]
\end{lem}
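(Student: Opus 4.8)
The plan is to unwind the two string diagrams in the statement into honest compositions of linear maps and reduce the claim to the compatibility axiom for $Q_1$ and $Q_2$. Reading from bottom to top, the left-hand diagram is $P = Q_1 \circ Q_2^* \colon H \otimes B \to A \otimes H$, and the right-hand diagram is $(1_A \otimes Q_2^*)\circ(Q_1 \otimes 1_B)\colon H \otimes B \to A \otimes H$. So the content of the lemma is the identity
\[
 Q_1 \circ Q_2^* \;=\; (1_A \otimes Q_2^*)\circ(Q_1 \otimes 1_B),
\]
i.e. emitting an $A$ on the left (via $Q_1$) commutes with absorbing a $B$ on the right (via $Q_2^*$). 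Because both maps touch the common factor $H$, this is \emph{not} just an instance of the interchange law; the trick is to push $Q_2^*$ down to the coaction $Q_2$, where the compatibility axiom becomes applicable.

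Concretely, I would first record the Frobenius identity
\[
 Q_2^* \;=\; (1_H \otimes ev_B)\circ(Q_2 \otimes 1_B), \qquad ev_B = i_B^* \circ m_B,
\]
which is precisely the second equation of condition $(3)$ in \Cref{qbelmntdefn} (equivalently, bend the $B$-leg of $Q_2$ around using the self-duality of the Q-system $B$, cf. \Cref{Qdual} and the computations in \Cref{frobeniuslem}). Substituting this for $Q_2^*$ in $Q_1 \circ Q_2^*$ and commuting $Q_1 \colon H \to A \otimes H$ past $ev_B \colon B \otimes B \to \C$ — legitimate, since these act on disjoint tensor factors — yields
\[
 Q_1 \circ Q_2^* \;=\; (1_A \otimes 1_H \otimes ev_B)\circ(Q_1 \otimes 1_{B\otimes B})\circ(Q_2 \otimes 1_B).
\]
Since $(Q_1 \otimes 1_{B\otimes B})\circ(Q_2 \otimes 1_B) = \big((Q_1 \otimes 1_B)\circ Q_2\big)\otimes 1_B$, the third equation of condition $(1)$ in \Cref{qbelmntdefn} (compatibility of the left $A$-coaction with the right $B$-coaction), namely $(Q_1 \otimes 1_B)\circ Q_2 = (1_A \otimes Q_2)\circ Q_1$, rewrites this as
\[
 Q_1 \circ Q_2^* \;=\; (1_A \otimes 1_H \otimes ev_B)\circ(1_A \otimes Q_2 \otimes 1_B)\circ(Q_1 \otimes 1_B) \;=\; \big(1_A \otimes \big[(1_H \otimes ev_B)\circ(Q_2 \otimes 1_B)\big]\big)\circ(Q_1 \otimes 1_B).
\]
Refolding the bracket via the same Frobenius identity replaces it by $Q_2^*$, giving $(1_A \otimes Q_2^*)\circ(Q_1 \otimes 1_B)$; translating this chain of equalities back into string diagrams finishes the proof.

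The step that genuinely needs the Frobenius structure — and the main obstacle to a shorter argument — is the first one. The obvious shortcut, composing the compatibility identity $(1_A \otimes Q_2)\circ Q_1 = (Q_1 \otimes 1_B)\circ Q_2$ with $1_A \otimes Q_2^*$ on the left and using $Q_2^* \circ Q_2 = 1_H$ from \Cref{isoprop}, only produces $Q_1 = (1_A \otimes Q_2^*)\circ(Q_1 \otimes 1_B)\circ Q_2$, and one is then stuck with the projection $Q_2 \circ Q_2^*$, which cannot be deleted without already knowing the desired identity (or its adjoint). Trading $Q_2^*$ for the coaction $Q_2$ via condition $(3)$ is exactly what circumvents this; everything afterwards is a routine diagram chase using only bifunctoriality of $\otimes$.
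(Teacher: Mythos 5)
Your decoding of both diagrams is right, and your argument — unfold $Q_2^*$ via condition (3) of \Cref{qbelmntdefn}, slide $Q_1$ past the evaluation by interchange, apply the compatibility equation (third identity of condition (1)), and refold with condition (3) — is exactly the paper's proof, which cites precisely those two conditions and leaves the diagram chase to the reader. So the proposal is correct and takes essentially the same route, just written out in full.
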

\begin{proof}
	It easily follows from the third equation of condition (1) of \Cref{qbelmntdefn} and condition (3) of \Cref{qbelmntdefn}.
\end{proof}

\begin{prop}
	$P$ is a non-unital quantum function from $B$ to $A$.
\end{prop}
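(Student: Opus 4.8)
The plan is to verify directly the two identities that define a \emph{non-unital} quantum function in \Cref{qfndefn}, namely the first and third displayed identities; the second (the unit/counit axiom) is exactly the one we are allowed to drop. Since $P$ is to be a quantum function \emph{from $B$ to $A$}, those identities are read from \Cref{qfndefn} with the roles of $A$ and $B$ (hence of the red and blue strands) interchanged. As a preliminary I would record the two presentations of $P$ furnished by \Cref{associativelem},
\[ P \;=\; Q_1\circ Q_2^{*}\;=\;(1_A\ot Q_2^{*})\circ(Q_1\ot 1_B), \]
and translate the bi-element axioms of \Cref{qbelmntdefn} into the form in which they will be used: condition~(1) yields the associativity (and, by adjointness, the co-associativity) relations for the $A$-leg $Q_1$ and the $B$-leg $Q_2$, together with the commutation of the $A$-leg with the $B$-leg; \Cref{frobeniuslem} yields the Frobenius relations of $Q_1$ relative to the Q-system $A$ and of $Q_2$ relative to the Q-system $B$, which are precisely what is needed to slide the evaluation and coevaluation maps of the self-dual Q-systems $A$ and $B$ past the legs of $P$; and \Cref{isoprop} yields $Q_1^{*}Q_1=1_H=Q_2^{*}Q_2$, to collapse any spurious factor $Q_i^{*}Q_i$ that may arise.

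For the first identity of \Cref{qfndefn} I would expand the side that carries two copies of $P$ glued together by a (co)multiplication on the $B$-strand and on the $A$-strand. Rewriting each copy of $P$ in the split form of \Cref{associativelem} (``branch off the $A$-leg, then absorb the $B$-leg'') and performing a planar isotopy brings the two $A$-legs adjacent to one another and, likewise, the two $B$-legs; the pair of $A$-legs then collapses against the $A$-(co)multiplication by the (co)associativity of $Q_1$, the pair of $B$-legs collapses against the $B$-(co)multiplication by the (co)associativity of $Q_2$, and the residual cap or cup left on the remaining strand is absorbed using the appropriate half of \Cref{frobeniuslem}. What is left is the other side of the identity.

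For the third identity of \Cref{qfndefn} one uses $P^{*}=Q_2\circ Q_1^{*}$ and expands the opposite side, which carries $P$ together with the evaluations, coevaluations, units and counits of $A$ and $B$. The decisive steps are: rotate the $Q_1$-leg around the (co)evaluation of $A$ by means of \Cref{frobeniuslem}(i), rotate the $Q_2^{*}$-leg around the (co)evaluation of $B$ by means of \Cref{frobeniuslem}(ii), reorder the two legs into the order displayed by $P^{*}=Q_2\circ Q_1^{*}$ using the commutation relation from condition~(1) of \Cref{qbelmntdefn}, and collapse any factor $Q_1^{*}Q_1$ or $Q_2^{*}Q_2$ produced along the way by \Cref{isoprop}. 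This is the step I expect to be the main obstacle, because it is the only point at which the self-duality data of \emph{both} Q-systems is active simultaneously: tracking the cups, caps and bullets through the two legs of $P$ is delicate, and one must confirm that only the relations listed above are used --- in particular that none of the unitality of a quantum function intervenes, so that $P$ really is non-unital. Having checked the first and third identities, $P$ is a non-unital quantum function from $B$ to $A$, as asserted.
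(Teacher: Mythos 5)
Your proposal follows essentially the same route as the paper: verify the first and third identities of \Cref{qfndefn} (with the roles of $A$ and $B$, i.e.\ red and blue, interchanged), starting from the two presentations $P=Q_1\circ Q_2^{*}=(1_A\otimes Q_2^{*})\circ(Q_1\otimes 1_B)$ of \Cref{associativelem} and collapsing the diagrams with the bi-element axioms and the Q-system axioms. The paper's bookkeeping is slightly leaner --- for the first identity it uses the adjoint of the second equation of condition (1) of \Cref{qbelmntdefn} together with separability of $B$, and for the third it uses condition (3) and its adjoint plus \Cref{associativelem}, so neither \Cref{frobeniuslem} nor the isometry collapses of \Cref{isoprop} are actually needed --- but these are minor variations within the same argument.
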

\begin{proof}
	We have,
	$$\raisebox{-12mm}{\begin{tikzpicture}
			\draw (-.3,0) to (-.3,-.8);
			\draw[blue] (.9,-.2) to (.9,.8);
			\draw (.9,.8) to (.9,1.5);
			\draw (.3,0) to (.3,.6) ;
			\draw[red] (.3,.8) to (.3,1.5);
			\draw[red] (-.3,0) to (-.3,1.5);
			\draw[blue,in=-90,out=-90,looseness=2] (.3,-.2) to (.9,-.2);
			\draw[blue] (.6,-.55) to (.6,-.8);
			\node[draw,thick, rounded corners, fill=white,minimum width=30] at (0,0) {$P$};
			\node[draw,thick, rounded corners, fill=white,minimum width=30] at (.6,.8) {$P$};
			\node[scale=.8] at (.6,-.55) {$\color{blue}{\bullet}$};
	\end{tikzpicture}} = \raisebox{-12mm}{\begin{tikzpicture}
	\draw (0,-.7) to (0,2);
	\draw[blue,in=120,out=90,looseness=1] (.5,0) to (0,.4);
	\draw[blue,in=120,out=90,looseness=1] (.9,.7) to (0,1.2);
	\draw[red,in=-120,out=-90,looseness=1] (-.9,1.3) to (0,.8);
	\draw[red,in=-120,out=-90,looseness=1] (-.5,2) to (0,1.6);
	\draw[blue] (.9,.7) to (.9,0);
	\draw[blue,in=-90,out=-90,looseness=2] (.5,0) to (.9,0);
	\draw[blue] (.7,-.3) to (.7,-.7);
	\node[scale=.8] at (.7,-.25) {$\color{blue}{\bullet}$};
	\node[scale=.8] at (0,.4) {$\color{blue}{\bullet}$};
	\node[scale=.8] at (0,.8) {$\red{\bullet}$};
	\node[scale=.8] at (0,1.25) {$\color{blue}{\bullet}$};
	\node[scale=.8] at (0,1.6) {$\red{\bullet}$};
\end{tikzpicture}} = \raisebox{-12mm}{\begin{tikzpicture}
\draw (0,-.7) to (0,2);
\draw[blue] (.9,.2) to (.9,0);
\draw[blue] (.7,-.3) to (.7,-.7);
\draw[blue,in=120,out=90,looseness=1] (.5,0) to (0,.4);
\draw[blue,in=120,out=90,looseness=1] (.9,.2) to (0,.8);
\draw[red,in=-120,out=-90,looseness=1] (-.5,2) to (0,1.6);
\draw[red,in=-120,out=-90,looseness=1] (-.9,1.8) to (0,1.2);
\draw[blue,in=-90,out=-90,looseness=2] (.5,0) to (.9,0);
\node[scale=.8] at (.7,-.25) {$\color{blue}{\bullet}$};
\node[scale=.8] at (0,.4) {$\color{blue}{\bullet}$};
\node[scale=.8] at (0,.85) {$\color{blue}{\bullet}$};
\node[scale=.8] at (0,1.2) {$\color{red}{\bullet}$};
\node[scale=.8] at (0,1.6) {$\color{red}{\bullet}$};
\end{tikzpicture}} = \raisebox{-8mm}{\begin{tikzpicture}
\draw (0,0) to (0,2);
\draw[blue,in=120,out=90,looseness=1] (.5,0) to (0,.4);
\draw[red,in=-120,out=-90,looseness=1] (-.5,2) to (0,1.6);
\draw[red,in=-120,out=-90,looseness=1] (-.9,1.8) to (0,1.2);
\node[scale=.8] at (0,.4) {$\color{blue}{\bullet}$};
\node[scale=.8] at (0,1.2) {$\color{red}{\bullet}$};
\node[scale=.8] at (0,1.6) {$\color{red}{\bullet}$};
\end{tikzpicture}} = \raisebox{-8mm}{\begin{tikzpicture}
\draw (0,-.5) to (0,1.5);
\draw[blue,in=120,out=90,looseness=1] (.5,-.5) to (0,0);
\draw[red,in=-120,out=-90,looseness=1] (-.5,.8) to (0,.5);
\draw[red,in=-90,out=-90,looseness=2] (-.7,1) to (-.3,1);
\draw[red] (-.7,1) to (-.7,1.5);
\draw[red] (-.3,1) to (-.3,1.5);
\node[scale=.7] at (-.5,.75) {$\red{\bullet}$};
\node[scale=.7] at (0,.5) {$\red{\bullet}$};
\node[scale=.7] at (0,0) {$\color{blue}{\bullet}$};
\end{tikzpicture}} = 	\raisebox{-6mm}{\begin{tikzpicture}
\draw (-.3,0) to (-.3,-.8);
\draw[blue] (.3,0) to (.3,-.8);
\draw (.3,0) to (.3,.9) ;
\draw[red] (-.3,0) to (-.3,.5);
\draw[red,in=-90,out=-90,looseness=2] (-.6,.9) to (0,.9);
\node[scale=.8] at (-.3,.55) {$\color{red}{\bullet}$};
\node[draw,thick, rounded corners, fill=white,minimum width=30] at (0,0) {$P$};
\end{tikzpicture}}  $$
The second equality is obtained by applying \Cref{associativelem}. Taking adjoint of the second equation of condition (1) of \Cref{qbelmntdefn} and using the separability of the Q-system $B$ gives us the third equality. The fourth equality is obtained from the first equation of condition (1) of \Cref{qbelmntdefn}. Thus, $P$ satisfies the first condition of being a non-unital quantum function.

We also have,
$$\raisebox{-6mm}{\begin{tikzpicture}
		\draw[red] (-.3,0) to (-.3,-.8);
		\draw (.3,0) to (.3,-.8);
		\draw[blue] (.3,0) to (.3,.8) ;
		\draw (-.3,0) to (-.3,.8);
		\node[draw,thick, rounded corners, fill=white,minimum width=30] at (0,0) {$P^*$};
\end{tikzpicture}} = \reflectbox{\rotatebox[origin=c]{180}{\raisebox{-4mm}{\begin{tikzpicture}
\draw (0,0) to (0,1.2);
\draw[blue,in=120,out=90,looseness=1] (.5,0) to (0,.4);
\draw[red,in=-120,out=-90,looseness=1] (-.5,1.2) to (0,.8);
\node[scale=.8] at (0,.4) {$\color{blue}{\bullet}$};
\node[scale=.8] at (0,.8) {$\red{\bullet}$};
\end{tikzpicture}}}} = \raisebox{-8mm}{\begin{tikzpicture}
\draw (0,-.3) to (0,1.2);
\draw[red][in=-120,out=-90,looseness=1] (-.3,.5) to (0,.3);
\draw[red,in=90,out=90,looseness=2] (-.7,.5) to (-.3,.5);
\draw[red] (-.7,.5) to (-.7,-.3);
\draw[red] (-.5,.7) to (-.5,1.2);
\draw[blue,in=120,out=90,looseness=1] (.5,.4) to (0,.8);
\draw[blue,in=-90,out=-90,looseness=2] (.5,.4) to (1,.4);
\draw[blue] (1,.4) to (1,1.2);
\draw[blue] (.75,-.3) to (.75,.1);
\node[scale=.7] at (-.5,.72) {$\red{\bullet}$};
\node[scale=.8] at (0,.3) {$\color{red}{\bullet}$};
\node[scale=.8] at (-.5,1.15) {$\color{red}{\bullet}$};
\node[scale=.8] at (0,.8) {$\color{blue}{\bullet}$};
\node[scale=.8] at (.75,.1) {$\color{blue}{\bullet}$};
\node[scale=.8] at (.75,-.3) {$\color{blue}{\bullet}$};
\end{tikzpicture}} = \raisebox{-8mm}{\begin{tikzpicture}
\draw (-.3,0) to (-.3,-.8);
\draw[blue,in=-90,out=-90,looseness=2] (.3,-.2) to (.8,-.2);
\draw (.3,0) to (.3,.9) ;
\draw[red,in=90,out=90,looseness=2] (-.3,.2) to (-.8,.2);
\draw[red] (-.55,.45) to (-.55,.8);
\draw[red] (-.8,.2) to (-.8,-.8);
\draw[blue] (.55,-.8) to (.55,-.5);
\draw[blue] (.8,-.2) to (.8,.9);
\node[scale=.8] at (-.55,.45) {$\color{red}{\bullet}$};
\node[scale=.8] at (-.55,.8) {$\color{red}{\bullet}$};
\node[scale=.8] at (.55,-.5) {$\color{blue}{\bullet}$};
\node[scale=.8] at (.55,-.8) {$\color{blue}{\bullet}$};
\node[draw,thick, rounded corners, fill=white,minimum width=30] at (0,0) {$P$};
\end{tikzpicture}} $$
The second equality follows from condition (3) of \Cref{qbelmntdefn} and its adjoint. Finally, the last equality is obtained from \Cref{associativelem}.
\end{proof}

\begin{defn}\label{intertwinerdefn}
	Let $(H,Q_1,Q_2)$ and $(K,P_1,P_2)$ be two quantum bi-elements of the pair $(A,B)$. We denote $Q_1, Q_2, P_1, P_2$ as follows :
	\[Q_1 \ = \ \raisebox{-2mm}{\begin{tikzpicture}[rotate=180]
			\draw (0,0) to (0,.6);
			\draw[red][in=60,out=90,looseness=1] (.5,0) to (0,.3);
			\node[scale=.8] at (0,.3) {$\color{red}{\bullet}$};
	\end{tikzpicture}} \ \ , \ \
	Q_2 \ = \ \raisebox{-2mm}{\begin{tikzpicture}[rotate=180]
			\draw (0,0) to (0,.6);
			\draw[blue][in=120,out=90,looseness=1] (-.5,0) to (0,.3);
			\node[scale=.8] at (0,.3) {$\color{blue}{\bullet}$};
	\end{tikzpicture}} \ \ , \ \
	 P_1 \ = \ \raisebox{-2mm}{\begin{tikzpicture}[rotate=180]
	\draw[purple] (0,0) to (0,.6);
	\draw[red][in=60,out=90,looseness=1] (.5,0) to (0,.3);
	\node[scale=.8] at (0,.3) {$\color{red}{\bullet}$};
\end{tikzpicture}} \ \ , \ \
P_2 \ = \ \raisebox{-2mm}{\begin{tikzpicture}[rotate=180]
\draw[purple] (0,0) to (0,.6);
\draw[blue][in=120,out=90,looseness=1] (-.5,0) to (0,.3);
\node[scale=.8] at (0,.3) {$\color{blue}{\bullet}$};
\end{tikzpicture}}\]
where $H, K, A$ and $ B $ are represented by black, purple, red and blue strands respectively. An \textit{intertwiner} $f : (H,Q_1,Q_2) \to (K,P_1,P_2)$ is a linear map $f : H \to K $ satisfying the following :
 $$\raisebox{-6mm}{\begin{tikzpicture}[rotate=180]
			\draw (0,0) to (0,.6);
			\draw[purple] (0,-.4) to (0,-1);
			\draw[red] (.5,0) to (.5,-1);
			\draw[red][in=60,out=90,looseness=1] (.5,0) to (0,.3);
			\node[scale=.8] at (0,.3) {$\color{red}{\bullet}$};
			\node[draw,thick,rounded corners,fill=white] at (0,-.3) {$f$};
	\end{tikzpicture}} = \raisebox{-6mm}{\begin{tikzpicture}[rotate=180]
	\draw[purple] (0,0) to (0,.6);
	\draw (0,1) to (0,1.6);
	\draw[red][in=60,out=90,looseness=1] (.5,0) to (0,.3);
	\node[scale=.8] at (0,.3) {$\color{red}{\bullet}$};
	\node[draw,thick,rounded corners,fill=white] at (0,.9) {$f$};
\end{tikzpicture}} \ \ \ \ \t{and} \ \ \ \ \raisebox{-6mm}{\begin{tikzpicture}[rotate=180]
\draw (0,0) to (0,.6);
\draw[purple] (0,-.4) to (0,-1);
\draw[blue] (-.5,0) to (-.5,-1);
\draw[blue][in=120,out=90,looseness=1] (-.5,0) to (0,.3);
\node[scale=.8] at (0,.3) {$\color{blue}{\bullet}$};
\node[draw,thick,rounded corners,fill=white] at (0,-.3) {$f$};
\end{tikzpicture}} = \raisebox{-6mm}{\begin{tikzpicture}[rotate=180]
\draw[purple] (0,0) to (0,.6);
\draw (0,1) to (0,1.6);
\draw[blue][in=120,out=90,looseness=1] (-.5,0) to (0,.3);
\node[scale=.8] at (0,.3) {$\color{blue}{\bullet}$};
\node[draw,thick,rounded corners,fill=white] at (0,.9) {$f$};
\end{tikzpicture}}   $$
\end{defn}

\begin{ex}
	Following \Cref{qbleg}, an intertwiner between qauntum bi-elements of $(A, \C)$ is just an intertwiner (in the sense of \cite{MRV}) of $A$. 
\end{ex}

We now turn our attention to the C*-2-category $\mcal G$ and its Q-system completion. For a pair $(A, B)$ of Q-systems in $\textbf{Hilb}$, we assemble their quantum bi-elements and the corresponding intertwiners into a category, which we will denote by $\textbf{QBL}_{A,B}$.

\begin{defn}
	For a pair $(A, B)$ of Q-systems in $\textbf{Hilb}$, we define the category $\textbf{QBL}_{A,B}$ of quantum bi-elements of the pair $(A, B)$ as follows : 
	\begin{itemize}
		\item [(i)] Objects are quantum bi-elements $(H,Q_1, Q_2)$ of the pair $(A, B)$.
		\item [(ii)] Morphisms from $(H, Q_1, Q_2)$ to $(K,P_1, P_2)$ consist of intertwiners as in \Cref{intertwinerdefn}.
	\end{itemize}
Composition of intertwiners is given by ordinary composition of linear maps.
\end{defn}  

We have observed in \Cref{qsysrem}, that $\mcal G$ is not Q-system complete. We show that the hom categories of $\textbf{QSys}(\mcal G)$ form full subcategories of the category of quantum bi-elements. We summarize this in the following result. 

\begin{thm}
	For two Q-systems $P$ and $Q$ in $\mcal G$, the hom category $\normalfont\textbf{QSys}(\mcal G)(Q, P)$ is a full subcategory of $\normalfont\textbf{QBL}_{Q,P}$.
\end{thm}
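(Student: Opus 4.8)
The plan is to exhibit $\textbf{QSys}(\mcal G)(Q,P)$ as (isomorphic to) a full subcategory of $\textbf{QBL}_{Q,P}$ via a forgetful functor
\[
\Phi\colon \textbf{QSys}(\mcal G)(Q,P)\longrightarrow \textbf{QBL}_{Q,P},
\]
which I would show is fully faithful and injective on objects. Here $Q\in\mcal G_1(\mathbb{H},\mathbb{H})$ and $P\in\mcal G_1(\mathbb{G},\mathbb{G})$ are the two Q-systems, regarded as Q-systems in $\textbf{Hilb}$ by discarding their co-actions. On objects $\Phi$ sends a $Q$-$P$ bimodule $(X,\lambda_X,\rho_X)$ in $\mcal G$ to $\bigl(H_X,\lambda_X^{*},\rho_X^{*}\bigr)$, where $H_X$ is the Hilbert space underlying the $\mathbb{H}$-$\mathbb{G}$ bimodule $X$; on morphisms it keeps a morphism $f\colon X\to Y$ of $\textbf{QSys}(\mcal G)(Q,P)$ as the same linear map $H_X\to H_Y$. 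First I would check this is well defined: read the bimodule axioms (B1)--(B4) of \Cref{bimoduledefn} purely in $\textbf{Hilb}$, i.e.\ as identities of linear maps with the co-actions discarded. Then (B1) is condition~(1) of \Cref{qbelmntdefn}, (B2) is condition~(2), and (B3) is condition~(3) up to the reformulations of \Cref{frobeniuslem}, via the self-duality of Q-systems (\Cref{Qdual}) and the graphical calculus of \Cref{graphcalc}; condition (B4) additionally says that $\lambda_X^{*},\rho_X^{*}$ are isometries, in accordance with \Cref{isoprop}, but it is not even needed for membership in $\textbf{QBL}_{Q,P}$. A morphism of $\textbf{QSys}(\mcal G)(Q,P)$ is by definition $Q$-$P$ bimodular, which after the evident adjunction is precisely the intertwiner condition of \Cref{intertwinerdefn}. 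Functoriality of $\Phi$ is immediate, and $\Phi$ is faithful since it does not alter the underlying linear map.

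The core of the proof is then the following rigidity fact: \emph{for every $Q$-$P$ bimodule $X$ in $\mcal G$, the co-actions $\alpha_{\mathbb{H},X}$ and $\alpha_{X,\mathbb{G}}$ are completely determined by the module maps $\lambda_X,\rho_X$ together with the unit maps and co-actions of the fixed Q-systems $Q$ and $P$.} By the unit axiom (B2) one has $1_{H_X}=\lambda_X\circ(i_Q\otimes 1_{H_X})$; since $\lambda_X$ is a $2$-cell of $\mcal G$ it intertwines the left $\mathbb{H}$-co-actions, and by \Cref{bimtensorproduct} the left $\mathbb{H}$-co-action on $Q\boxtimes X$ equals $\alpha_{\mathbb{H},Q}\otimes 1_{H_X}$, so composing the intertwining relation for $\lambda_X$ with $i_Q\otimes 1_{H_X}$ yields
\[
\alpha_{\mathbb{H},X}=\bigl(1_{C(\mathbb{H})}\otimes\lambda_X\bigr)\circ\bigl((\alpha_{\mathbb{H},Q}\circ i_Q)\otimes 1_{H_X}\bigr),
\]
which involves only $\lambda_X$, $i_Q$ and $\alpha_{\mathbb{H},Q}$; moreover, since $i_Q$ is a $2$-cell out of $1_{\mathbb{H}}$ we have $\alpha_{\mathbb{H},Q}\circ i_Q(\xi)=1_{\mathbb{H}}\otimes i_Q(\xi)$, so the right-hand side in fact collapses to the trivial co-action $\alpha_{\mathbb{H},X}(x)=1_{\mathbb{H}}\otimes x$. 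The symmetric computation with $\rho_X$, $i_P$ and $\alpha_{P,\mathbb{G}}$ pins down $\alpha_{X,\mathbb{G}}$. Consequently the full $\mcal G$-datum of $X$ is recovered from $\Phi(X)$, so $\Phi$ is injective on objects.

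Fullness then follows from the same formula. Given $Q$-$P$ bimodules $X,Y$ in $\mcal G$ and an intertwiner $f\colon\Phi(X)\to\Phi(Y)$ of quantum bi-elements, i.e.\ a linear map $f\colon H_X\to H_Y$ with $f\circ\lambda_X=\lambda_Y\circ(1_Q\otimes f)$ and $f\circ\rho_X=\rho_Y\circ(f\otimes 1_P)$, it is enough to show $f\in\mcal G_2(X,Y)$, for then, being also $Q$-$P$ bimodular, it is a morphism of $\textbf{QSys}(\mcal G)(Q,P)$. Substituting the displayed formula for $\alpha_{\mathbb{H},Y}$, commuting $f$ through it (using functoriality of $\otimes$), and applying $\lambda_Y\circ(1_Q\otimes f)=f\circ\lambda_X$ gives
\[
\alpha_{\mathbb{H},Y}\circ f=\bigl(1_{C(\mathbb{H})}\otimes f\bigr)\circ\alpha_{\mathbb{H},X},
\]
so $f$ intertwines the left $\mathbb{H}$-co-actions; the right $\mathbb{G}$-co-actions are treated symmetrically via $\rho$ and $P$. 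Thus $\Phi$ is full, and together with faithfulness and injectivity on objects it exhibits $\textbf{QSys}(\mcal G)(Q,P)$ as a full subcategory of $\textbf{QBL}_{Q,P}$.

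The main obstacle will be the rigidity fact of the second paragraph — recovering the co-actions on a $Q$-$P$ bimodule from its module structure — since it is what simultaneously forces an intertwiner of quantum bi-elements (between objects coming from $\mcal G$) to respect the co-actions and what makes $\Phi$ injective on objects; everything else is then formal. The only other genuinely laborious point is the diagrammatic bookkeeping in the first paragraph matching (B1)--(B4) with conditions (1)--(3) of \Cref{qbelmntdefn}, which has to be carried out carefully since the $\textbf{QBL}$ pictures are drawn as reflected and rotated versions of the bimodule pictures; this is routine given \Cref{Qdual} and the calculus recalled in \Cref{graphcalc}.
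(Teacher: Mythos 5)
Your proposal is correct, and in outline it coincides with the paper's (essentially unstated) argument: the paper's proof is a one-line appeal to \Cref{frobeniuslem}, \Cref{isoprop} and \cite[Facts 3.15]{CPJP}, whose content is exactly your first paragraph --- send a $Q$-$P$ bimodule $(X,\lambda_X,\rho_X)$ to $(H_X,\lambda_X^*,\rho_X^*)$ and match (B1)--(B4) with conditions (1)--(3) of \Cref{qbelmntdefn}, condition (3) coming from (B2) and (B3). What you add, and what the paper leaves completely implicit, is the fullness step: a morphism of $\textbf{QBL}_{Q,P}$ only intertwines the module maps, while a $2$-cell of $\textbf{QSys}(\mcal G)$ must also intertwine the quantum group co-actions, so something must make the latter automatic. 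Your rigidity observation supplies this and is correct in the paper's setting: the unit of a Q-system in $\mcal G$ is a $2$-cell out of the tensor unit ($\C$ with trivial co-actions), so its image is an invariant vector, and combining the intertwiner property of $\lambda_X$ with the formula $\alpha_{\mathbb{G},V\otimes W}=\alpha_{\mathbb{G},V}\otimes 1_W$ of \Cref{bimtensorproduct} and unitality (B2) forces both co-actions on $X$ (indeed on $Q$ and $P$ themselves) to be trivial; hence every linear map intertwines them. Making this explicit is a genuine improvement on the paper's one-line proof.

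One point needs repair. Your phrase ``after the evident adjunction'', and your later description of a $\textbf{QBL}$-morphism as a map satisfying $f\circ\lambda_X=\lambda_Y\circ(1_Q\otimes f)$, are not literally accurate: taking adjoints of the bimodularity relation $f\circ\lambda_X=\lambda_Y\circ(1_Q\otimes f)$ yields $\lambda_X^*\circ f^*=(1_Q\otimes f^*)\circ\lambda_Y^*$, i.e.\ the condition of \Cref{intertwinerdefn} for $f^*$, whereas membership in $\textbf{QBL}_{Q,P}$ requires $(1_Q\otimes f)\circ\lambda_X^*=\lambda_Y^*\circ f$ (and similarly on the right). The two conditions are indeed equivalent, but only via the Frobenius reciprocity identity $\lambda_X=\bigl((i_Q^*\circ m_Q)\otimes 1_{H_X}\bigr)\circ\bigl(1_Q\otimes\lambda_X^*\bigr)$ --- condition (3) of \Cref{qbelmntdefn}, available from (B2) and (B3) or from \cite[Facts 3.15]{CPJP} --- equivalently, via the standard fact that bimodule maps over Q-systems are closed under adjoints. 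That short computation has to be inserted both in the well-definedness of $\Phi$ on morphisms and in the fullness step; with it, your argument is complete.
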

\begin{proof}
	This follows from \Cref{frobeniuslem}, \Cref{isoprop} and \cite[Facts 3.15]{CPJP}.
\end{proof}



\end{document}